\numberwithin{equation}{section}
\newcommand \Proj {\ensuremath{\mathrm{Proj}}}
\newcommand \im   {\ensuremath{\mathrm{im}}}
\newcommand \ini {\ensuremath{\mathrm{in}}}
\newcommand \Tor {\ensuremath{\mathrm{Tor}}}
\newcommand \reg {\mathrm {reg}}
\newcommand \codim {\ensuremath{\mathrm{codim}}}
\newcommand \depth {\ensuremath{\mathrm{depth}}}
\def\P{{\mathbb P}}
\def\Z{{\mathbb Z}}
\def\N{{\mathbb N}}
\def\T{{\mathit T}}
\newcommand \st[1] {\stackrel{#1}{\longrightarrow}}
\newcommand \sts[1] {\stackrel{#1}{\rightarrow}}
\newcommand \lra {\rightarrow}
\newcommand\keyword[1]{\par{\bf {\it Keywords:~}}{#1}}
\newcommand \rrlap[1]{\hbox to 0pt{#1}}
\theoremstyle{plain} 
\newtheorem{Thm}{Theorem}[section]
\newtheorem{Prop}[Thm]{Proposition}
\newtheorem{Lem}[Thm]{Lemma}
\theoremstyle{definition}
\newtheorem{Def}[Thm]{Definition}
\newtheorem{Ex}[Thm]{Example}
\newtheorem{Qu}[Thm]{Question}
\newtheorem{Remk}[Thm]{Remark}
\newtheorem{Obs}[Thm]{Observation}
\newtheorem{Fact}[Thm]{Fact}
\begin{document}

\title[Sharp bounds for higher linear syzygies]{Sharp bounds for higher linear syzygies and classifications of projective varieties}
\author[K.\ Han and S.\ Kwak]{Kangjin Han and Sijong Kwak}
\address{School of Mathematics, Korea Institute for Advanced Study (KIAS),
85 Hoegiro, Dongdaemun-gu, Seoul 130--722, Korea}
\email{kangjin.han@kias.re.kr}
\address{Department of Mathematics, Korea Advanced Institute of Science and Technology (KAIST),
373-1 Guseong-dong, Yuseong-gu, Daejeon, Korea}
\email{skwak@kaist.ac.kr}
\thanks{2010 \textit{Mathematics Subject Classification.} Primary 14N05, 13D02, 14N25; Secondary 51N35.\\
The first author was supported by Basic Science Research Program through the National Research Foundation
of Korea (NRF) funded by the Ministry of Education, Science and Technology (grant no. 2012R1A1A2038506). The second author was supported by Basic Science Research Program through the National Research Foundation
of Korea (NRF) funded by the Ministry of Education, Science and Technology (grant no. 2009-0063180)}
\date{\today}


\begin{abstract}
In the present paper, we consider upper bounds of higher linear syzygies i.e. graded Betti numbers in the first linear strand of the minimal free resolutions of projective varieties in arbitrary characteristic. For this purpose, we first remind `Partial Elimination Ideals (PEIs)' theory and introduce a new framework in which one can study the syzygies of embedded projective schemes well using PEIs theory and the reduction method via inner projections.

Next we establish fundamental inequalities which govern the relations between the graded Betti numbers in the first linear strand of an algebraic set $X$ and those of its inner projection $X_q$. Using these results, we obtain some natural sharp upper bounds for higher linear syzygies of any nondegenerate projective variety in terms of the codimension with respect to its own embedding and classify what the extremal case and next-to-extremal case are. This is a generalization of Castelnuovo and Fano's results on the number of quadrics containing a given variety and another characterization of varieties of minimal degree and del Pezzo varieties from the viewpoint of `syzygies'. Note that our method could be also applied to get similar results for more general categories (e.g. connected in codimension one algebraic sets).
 
\bigskip
\noindent\keyword{linear syzygies, graded Betti numbers, property $\textbf{N}_{d,p}$, partial elimination ideals, inner projection, varieties of small degree.}
\end{abstract}

\maketitle
\tableofcontents \setcounter{page}{1} 
\section{Introduction}\label{intro}

Let $X\subset\P^{N}$ be any nondegenerate variety (i.e. irreducible and reduced closed subscheme) over any field $k$ of dimension $n$ and of $\codim(X,\P^N)=e$. Let $R:=k[x_0,\cdots,x_N]$ be the coordinate ring of $\P^N$ and $R_X:=R/I_X$ be also the one of $X$. The \textit{graded Betti numbers} of $X$ is defined by
\begin{equation}
\beta_{p,q}(X):=\dim_k\Tor^R_p(R_X,k)_{p+q}
\end{equation}

and the \textit{Betti table} of $X$, $\mathbb{B}(X)$ consists of these graded Betti numbers of $X$. This table is usually considered to represent the type of the minimal free resolution of $R_X$. For instance, $\beta_{1,1}$ corresponds to the number of (independent) \textit{quadrics} containing $X$ and so does $\beta_{2,1}$ to the \textit{linear} syzygies on them. We may present $\mathbb{B}(X)$ typically as follows:
{\setlength\abovecaptionskip{.1ex plus .125ex minus .125ex}
\begin{figure}[!hbp]
\begin{align*}\small
\mathbb{B}(X)&\quad
\begin{array}{c|c|c|c|c|c|c|c|c|c|c|c|c|c|}
  &0 &1 &\cdots &a(X) &a+1 &\cdots & b-1 & b(X) &\cdots  & p & \cdots \\ \hline
0 &1 & - &\cdots &- &- &\cdots & - & - &\cdots & - & \cdots \\ \hline
1 &- &\beta_{1,1} &\cdots & \beta_{a,1} & \beta_{a+1,1} &\cdots & \beta_{b-1,1} & - &\cdots &- & \cdots \\ \hline
2 &- &- &\cdots & - & \beta_{a+1,2} &\cdots & \beta_{b-1,2} & \beta_{b,2} &\cdots & \beta_{p,2} & \cdots \\ \hline
\vdots &- &- &\cdots & - & \ddots &\cdots & \vdots & \vdots  &\cdots & \ddots  & \cdots \\ \hline
q &- &- &\cdots & - & \beta_{a+1,q} &\cdots & \beta_{b-1,q} & \beta_{b,q}  &\cdots & \beta_{p,q}  & \cdots \\ \hline
\vdots &- &- &\cdots & - & \ddots &\cdots & \vdots & \vdots  &\cdots & \ddots  & \cdots \\ \hline
\end{array}
\end{align*}
\caption{\textsf{Betti table} of a nondegenerate variety $X$ in $\P^N$. We denote \textit{zero} by $-$. By two pivotal places, determined by $a=a(X), b=b(X)\ge 0$, we could characterize the \textit{first} linear strand of this resolution.}
\label{fig_betti}
\end{figure}}

Since M. Green showed through his foundational paper \cite{G2} several results which imply some of strong connections between geometry of projective varieties and their syzygies, there have been many problems and conjectures concerning shapes of $\mathbb{B}(X)$ and structures on some or all of $\{ \beta_{p,q}\textrm{'s}\}$. In this paper we will consider some interesting problems based on the first linear strand of Betti tables of projective varieties (or schemes) particularly.

By convention, we call the subcomplex (or the corresponding part of table) represented by Betti numbers $\beta_{1,1},\cdots,\beta_{b-1,1}$ in the second row of $\mathbb{B}(X)$ \textit{the (first) linear strand of $\mathbb{B}(X)$}. Following the notations in \cite{Eis}, we also denote the (homological) index to which the resolution admits \textit{only} linear syzygies by $a(X)$ and the first index from which there exists \textit{no more} linear syzygy by $b(X)$. Then, the linear strand of the minimal free resolution of $R_X$ can be characterized by these invariants $a(X)$ and $b(X)$.

Classically, there have been well-known results on the number of quadratic equations containing $X$, i.e. $\beta_{1,1}(X)$ (see \cite{L'vo,Z,HK1} for modern references). Before stating them, let us make our terminology clear. Say $d=\deg(X)$, \textit{degree} of $X$. One can say that $X$ is a \textit{variety of minimal degree} (abbr. VMD) if $d=e+1$. Here we call $X$ \textit{of next-to-minimal degree} when $d=e+2$. Furthermore, throughout this paper, we call $X$ a \textit{del Pezzo} variety if $X$ is \textit{arithmetically Cohen-Macaulay} (abbr. ACM) and of next-to-minimal degree. Then, the theorems say\\

\begin{itemize}
\item[(a)] {\it(Castelnuovo, 1889) Let $X$ be as above, \[\beta_{1,1}(X)\le {e+1 \choose 2}\] and the $``="$ holds if and only if $X$ is a variety of minimal degree.}
\item[(b)] {\it(Fano, 1894) Unless $X$ is a variety of minimal degree,\[\beta_{1,1}(X)\le {e+1 \choose 2}-1\] and the $``="$ holds if and only if $X$ is a del Pezzo variety.}
\end{itemize}

But when we move on higher $p$'s, it is not so feasible to handle \textit{higher linear syzygies (i.e. $\beta_{p,1}(X)$'s)} directly as to manipulate them in case of $p$ being very low (e.g. considering generators, their relations, and so on). In this paper we introduce a useful way to treat higher linear syzygies in a quite effective manner, that is

\begin{center}
 \textit{Projecting higher linear syzygies of $X$ to those of its projected image $X_{\tt{q}}$.}
\end{center} 
 
Especially, we will focus on \textit{inner} projection process (i.e. a projection taking its center from inside of $X$) here (see Remark \ref{red_inner} for details). We denote the Zariski closure of the image of $\pi_{\texttt{q}}:X\setminus\{\texttt{q}\}\rightarrow\P^{N-1}$ by $X_{\tt{q}}$. Note that this inner projection process often transplants much of favorable structures on syzygies and Betti table into its projected image, in contrast with outer projection (e.g. see \cite{HK1}).\\

\paragraph*{\textbf{Main results}} Now we present our main results. First, we are giving a very useful inequality through which we can explain the relations between the Betti numbers in the first linear strand of $X$ and $X_{\tt{q}}$ essentially.

\begin{Thm}\label{fund_ineq}
Let $X^n\subset \mathbb P^{n+e}$ be a nondegenerate variety of codimention $e$, $\texttt{q}\in X$ be any closed point of $X$. For any $p\ge 1$, the following holds
\begin{align}\label{ubd_main}
\beta_{p,1}(X)&\le\beta_{p,1}(X_{\texttt{q}})+\beta_{p-1,1}(X_{\texttt{q}})+{e\choose p}~.
\end{align}
\end{Thm}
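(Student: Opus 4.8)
}
The plan is to reduce everything to a single variable elimination and to read the comparison off the partial elimination ideals. Choose coordinates with $\texttt{q}=[1:0:\cdots:0]$, write $R=R'[x_0]$ where $R'=k[x_1,\ldots,x_{n+e}]$ is the coordinate ring of the target $\P^{n+e-1}$ of $\pi_{\texttt{q}}$, and recall $\beta_{p,1}(X)=\dim_k\Tor^R_p(R_X,k)_{p+1}$. First I would degenerate to the initial ideal $\ini(I_X)$ for a term order eliminating $x_0$: by upper--semicontinuity of graded Betti numbers under Gr\"obner degeneration, $\beta_{p,1}(X)\le \dim_k\Tor^R_p(R/\ini(I_X),k)_{p+1}$. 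The role of PEI theory here is the identification $\ini(I_X)=\bigoplus_{i\ge 0}x_0^i K_i$, so that as a graded $R$-module $M:=R/\ini(I_X)\cong\bigoplus_{i\ge 0}(R'/K_i)(-i)$, where $K_i=K_i(I_X)\subseteq R'$ is the $i$-th partial elimination ideal, $K_0=I_{X_{\texttt{q}}}$, and the $x_0$-action is the shift $R'/K_i\to R'/K_{i+1}$ induced by $K_i\subseteq K_{i+1}$. This turns the problem into a completely explicit computation.

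Next I would compute $\Tor^R(M,k)$ by the change of rings $R'\subset R$. Since $x_0$ is a nonzerodivisor, $K^R_\bullet(M)$ is the mapping cone of multiplication by $x_0$ on $K^{R'}_\bullet(M)$, whence, with $T_m^{(j)}:=\bigoplus_i\Tor^{R'}_m(R'/K_i,k)_{j-i}$,
\[
\dim_k\Tor^R_p(M,k)_{p+1}=\dim\coker\!\big(x_0\colon T_p^{(p)}\to T_p^{(p+1)}\big)+\dim\ker\!\big(x_0\colon T_{p-1}^{(p)}\to T_{p-1}^{(p+1)}\big).
\]
The decisive simplification is a degree count: the $i$-th summand is a piece of the (degree-$0$-generated) resolution of $R'/K_i$ placed in internal degree strictly below its homological index once $i\ge 2$, so only $i=0,1$ survive. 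The $i=0$ contributions are precisely $\beta_{p,1}(X_{\texttt{q}})$ and $\beta_{p-1,1}(X_{\texttt{q}})$; the $i=1$ contributions are the linear strand of $R'/K_1$. As $X_{\texttt{q}}$ is nondegenerate, $\Tor^{R'}_p(R'/K_0,k)_p=0$ for $p\ge 1$, so $T_p^{(p)}=0$ and the cokernel is all of $T_p^{(p+1)}=\beta_{p,1}(X_{\texttt{q}})+\dim\Tor^{R'}_p(R'/K_1,k)_p$.

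Two points remain, and these are the heart of the matter. The naive estimate now carries both the $p$-th \emph{and} $(p-1)$-st linear Betti numbers of $R'/K_1$, which exceeds the claimed bound by one Koszul term; this surplus must be killed by the connecting map. Here I would use that the linear strand of any $R'/J$ is the Koszul complex on its linear forms, i.e. $\Tor^{R'}_m(R'/J,k)_m=\wedge^m (J_1)$ \emph{functorially} in $J$, so the $x_0$-map on the $i=1$ part of $T_{p-1}^{(p)}$ is $\wedge^{p-1}$ of the inclusion $(K_1)_1\hookrightarrow (K_2)_1$, hence injective; thus the $K_1$-contribution to the kernel vanishes and $\dim\ker\le\beta_{p-1,1}(X_{\texttt{q}})$. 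Second, I must bound $\dim\Tor^{R'}_p(R'/K_1,k)_p=\binom{\dim(K_1)_1}{p}$ by $\binom{e}{p}$, i.e. show $\dim(K_1)_1\le e$. This I would obtain geometrically: every quadric $q\in(I_X)_2$ has $x_0$-degree $\le 1$ because $\texttt{q}\in X$, so it has the form $q=x_0\ell+h$ with $\ell\in (K_1)_1$, $h\in R'_2$, and a direct differentiation gives $dq|_{\texttt{q}}=\ell$; hence $(K_1)_1$ embeds into the conormal space of $X$ at $\texttt{q}$, of dimension $N-\dim T_{\texttt{q}}X\le e$. Assembling the three contributions gives exactly \eqref{ubd_main}.

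I expect the genuine obstacle to be precisely the collapse from $\binom{e+1}{p}$ to the sharp $\binom{e}{p}$: the two exact-sequence inputs (Gr\"obner degeneration and the change-of-rings cone) each behave additively, and only the careful bookkeeping of the single connecting map $x_0$ on the $K_1$-strand — its injectivity coming from the functoriality of linear strands under $K_1\subseteq K_2$ — removes the spurious $\binom{e}{p-1}$. The companion estimate $\dim(K_1)_1\le e$, though clean via the conormal space, is the other delicate ingredient, and it is reassuring that it is exactly the $p=1$ instance of the theorem, with equality forced when $V(K_1)$ is a linear $\P^{n-1}$ of codimension $e$ — the configuration occurring for varieties of minimal degree, which is where \eqref{ubd_main} should be sharp.
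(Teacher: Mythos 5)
Your argument is correct, and it reaches the bound \eqref{ubd_main} by a genuinely different route than the paper's proof of the strengthened statement (Theorem \ref{1st_str_ineq}). You degenerate first: you pass to $\bigoplus_i x_0^iK_i(I_X)$, which is the initial ideal for the \emph{weight} order $w(x_0)=1$, $w(x_j)=0$ (i.e.\ the associated graded of the $\widetilde{K}_i$-filtration) rather than for a genuine monomial term order as your wording suggests; you then invoke semicontinuity of graded Betti numbers, after which the $x_0$-elimination mapping cone becomes a block computation on a direct sum in which only the $i=0,1$ summands survive the degree count, and the surplus $\binom{t}{p-1}$ is killed by the injectivity of $\wedge^{p-1}$ of $(K_1)_1\hookrightarrow(K_2)_1$. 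The paper instead works with $I_X$ itself as an $S$-module, filtered by the $\widetilde{K}_i$, and chases a commutative diagram built from the elimination mapping cone sequence; the role of your exterior-power injectivity is played there by the injectivity of the connecting map $\phi$ into $\T^S_{p-1,3}(I/\widetilde{K}_1(I))$, deduced from $\T^S_{p,2}\big(\tfrac{K_2(I)}{K_1(I)}(-2)\big)=0$, while the estimate $\dim (K_1)_1\le e$ is obtained exactly as in your conormal-space argument (Remark \ref{suff_s_1}). What the paper's heavier bookkeeping buys is two-sided information: the lower bound \eqref{lbd} and the equalities \eqref{Np_eq}, \eqref{Np_eq_2} under $a(X)\ge 1$, which are used later for the classification theorems and which your semicontinuity step, being one-directional, cannot deliver. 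For the upper bound alone your route is legitimate and arguably cleaner; the two points worth making fully explicit are the functoriality claim that the map $\Tor^S_m(S/J,k)_m\to\Tor^S_m(S/J',k)_m$ induced by $J\subseteq J'$ is $\wedge^m$ of the inclusion of linear parts (forced by minimality of the resolutions), and the identification $K_0(I_X)=I_X\cap S=I_{X_{\texttt{q}}}$ so that the $i=0$ summands really compute $\beta_{p,1}(X_{\texttt{q}})$ and $\beta_{p-1,1}(X_{\texttt{q}})$.
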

In fact, the inequality (\ref{ubd_main}) is stated here in a weaker form of its own for simplicity. We will present and prove a more strengthened version of Theorem \ref{fund_ineq} in section \ref{prf_mains} (see Theorem \ref{1st_str_ineq}) for the sake of future use.\\

As a direct consequence of Theorem \ref{fund_ineq}, we can obtain optimal upper bounds on $\beta_{p,1}$ of \textit{every} variety for more higher $p$ in the linear strand.

\begin{Thm}\label{sharp_ubd}
Let $X^n\subset\P^{n+e}$ be any nondegenerate variety of codim $e\ge 1$. Then,
\begin{equation}\label{extr_bd}
\beta_{p,1}(X)\leq p{e+1 \choose p+1}\quad \textrm{for}~all~p\ge 0
\end{equation}
\end{Thm}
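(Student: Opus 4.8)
The plan is to induct on the codimension $e$, pushing the problem from $X$ down to an inner projection $X_{\texttt{q}}$ by means of Theorem \ref{fund_ineq} and then reassembling the binomial coefficients with Pascal's rule. As the base of the induction I take $e=0$, where $X=\P^{n}$ carries the trivial resolution $R_X=R$, so that $\beta_{p,1}(X)=0=p\binom{1}{p+1}$ for every $p\ge 1$. The case $p=0$ of (\ref{extr_bd}) also holds for all $e$, since $\beta_{0,1}(X)=0$ for a nondegenerate $X$ while $p\binom{e+1}{p+1}$ vanishes at $p=0$; so I may concentrate on $p\ge 1$ and $e\ge 1$.

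For the inductive step I fix $X^{n}\subset\P^{n+e}$ with $e\ge 1$ and assume (\ref{extr_bd}) in codimension $e-1$. Because $X$ is nondegenerate of positive codimension it is not a linear subspace, hence it is not a cone with a general point of $X$ as vertex; thus for general $\texttt{q}\in X$ the projection $\pi_{\texttt{q}}$ has finite general fibres and $\dim X_{\texttt{q}}=\dim X=n$, so $X_{\texttt{q}}\subset\P^{n+e-1}$ is again a nondegenerate variety, now of codimension $e-1$ (nondegeneracy because any hyperplane containing $X_{\texttt{q}}$ pulls back to a hyperplane through $\texttt{q}$ containing $X$). Since the inequality of Theorem \ref{fund_ineq} is valid at every closed point, I apply it at this particular $\texttt{q}$ and then invoke the inductive hypothesis for $X_{\texttt{q}}$:
\begin{align*}
\beta_{p,1}(X)&\le \beta_{p,1}(X_{\texttt{q}})+\beta_{p-1,1}(X_{\texttt{q}})+\binom{e}{p}\\
&\le p\binom{e}{p+1}+(p-1)\binom{e}{p}+\binom{e}{p}\\
&= p\binom{e}{p+1}+p\binom{e}{p}= p\binom{e+1}{p+1}.
\end{align*}
Here the middle terms combine as $(p-1)\binom{e}{p}+\binom{e}{p}=p\binom{e}{p}$, and the final equality is Pascal's identity $\binom{e}{p+1}+\binom{e}{p}=\binom{e+1}{p+1}$; this closes the induction.

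The one substantive ingredient is the geometric claim invoked at the start of the inductive step: that a suitably chosen (general) inner projection lowers the codimension by \emph{exactly} one while keeping $X_{\texttt{q}}$ a nondegenerate variety, which amounts to the equality $\dim X_{\texttt{q}}=\dim X$ together with nondegeneracy of the image. I expect this to be the main obstacle, and I would prefer to phrase it through finiteness of the general fibres (rather than birationality of $\pi_{\texttt{q}}$) so as to stay valid in arbitrary characteristic; it is precisely the behaviour of inner projections recorded in the PEI/reduction framework recalled earlier, so once it is cited the remainder is pure bookkeeping. In particular the shape of the bound in Theorem \ref{fund_ineq} is exactly what makes the three contributions collapse under Pascal's rule, and only the boundary values (small $p$, or $p\ge e+1$ where all the binomials vanish) require a quick separate check.
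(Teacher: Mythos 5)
Your argument is correct, and the arithmetic closes exactly as you claim: the three terms from Theorem \ref{fund_ineq} collapse via $(p-1)\binom{e}{p}+\binom{e}{p}=p\binom{e}{p}$ and Pascal's rule, and the geometric input you isolate (a general $\texttt{q}\in X$ is not a vertex of a cone structure on $X$, so $\pi_{\texttt{q}}$ is generically finite and $X_{\texttt{q}}$ is a nondegenerate variety of codimension exactly $e-1$) is precisely the standard fact recorded in the paper's reduction framework, valid in any characteristic. However, your route is genuinely different from the paper's. You induct on the codimension $e$ and apply the fundamental inequality \emph{once} per step, absorbing both $\beta_{p,1}(X_{\texttt{q}})$ and $\beta_{p-1,1}(X_{\texttt{q}})$ into the inductive hypothesis; a pleasant by-product is that the generalized $K_{p,1}$-vanishing (Lemma \ref{Kp,1}) comes out for free, since for $p>e$ your bound reads $\beta_{p,1}(X)\le p\binom{e+1}{p+1}=0$. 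The paper instead inducts on the homological index $p$: for fixed $p$ it iterates the inequality down an entire chain of inner projections $X^{(e)}\dasharrow\cdots\dasharrow X^{(p)}$, kills the leading term with Lemma \ref{Kp,1} (proved separately, by the same codimension induction you use globally), bounds the accumulated lower-index terms by the inductive hypothesis, and resums with Vandermonde-type identities (\ref{bino_id_a})--(\ref{bino_id_c}). Your proof is shorter and more self-contained for the bound itself; what the paper's heavier bookkeeping buys is the explicit decomposition of the bound into a ``diagonal'' part $(A)$ and an ``inheritance'' part $(B)$ (subsection \ref{direct_cal}), which is then reused to locate exactly where equality can fail --- this is what drives the classification of the extremal case (Theorem \ref{char_VMD}) and the next-to-extremal bounds of Section \ref{n_extr}, which are obtained by deleting $(A)$ from the same expansion. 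A single-step induction like yours proves the inequality but does not by itself expose that structure. One small shared caveat: in both arguments $\beta_{p,1}(X_{\texttt{q}})$ is computed from the elimination ideal $K_0(I_X)$, which need not be saturated; the paper addresses this in Remark \ref{for_gen_ideal}, and you should flag it likewise when invoking the inductive hypothesis for $X_{\texttt{q}}$.
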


Note that $p{e+1 \choose p+1}$ is the $p$-th Betti number of varieties of minimal degree (VMD) of codimension $e$.\\

And we can also add new characterizations to classical ones of VMD's as follows:

\begin{Thm}\label{char_VMD}
Let $X^n\subset \P^{n+e}$ be a nondegenerate variety with $e\ge 1$. Then, the following are all equivalent:
\begin{itemize}
\item[(a)] $X^n$ is a variety of minimal degree in $\P^{n+e}$.
\item[(b)] $\mathcal I_X$ is 2-regular.
\item[(c)] $a(X)\ge e$.
\item[(d)] $h^0(\P^{n+e}, \mathcal I_X(2))$=${e+1 \choose 2}$.
\item[(e)] one of $\beta_{p,1}(X)\textrm{'s}$ achieves the maximum for some $1\le p\le e$.
\item[(f)] all $\beta_{p,1}(X)$ achieve the maxima.
\end{itemize}
\end{Thm}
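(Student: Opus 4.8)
The plan is to first dispatch all implications issuing from (a), since these are encoded in the explicit minimal free resolution of a variety of minimal degree. If $X$ is a VMD of codimension $e$, its homogeneous ideal admits a $2$-linear minimal free resolution of Eagon--Northcott type; this immediately gives (b), forces the whole resolution to coincide with its linear strand so that $a(X)=e$ and $b(X)=e+1$ (hence (c)), and reads off $\beta_{p,1}(X)=p\binom{e+1}{p+1}$ for every $p$, which is exactly the maximum in Theorem~\ref{sharp_ubd}, giving (f); specializing to $p=1$ yields $h^0(\P^{n+e},\mathcal I_X(2))=\beta_{1,1}(X)=\binom{e+1}{2}$, i.e.\ (d). As (f)$\Rightarrow$(e) is trivial, the entire content lies in the reverse implications, each asserting that the stated property forces $X$ to be a VMD.

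The decisive step is (e)$\Rightarrow$(a), which I would prove by induction on the codimension $e$. The base case $e=1$ is elementary: a nondegenerate codimension-one variety is a hypersurface, and $\beta_{1,1}(X)=\binom{2}{2}=1$ places a quadric in $I_X$, so $X$ is a quadric and thus a VMD. For the inductive step, suppose $\beta_{p,1}(X)=p\binom{e+1}{p+1}$ for some $1\le p\le e$, and take $q$ to be a general, hence smooth, point of $X$, so that the inner projection $\pi_q$ is birational onto $X_q\subset\P^{n+e-1}$ with $\dim X_q=n$, $\codim X_q=e-1$, and $\deg X=\deg X_q+\mult_q X=\deg X_q+1$. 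Feeding the bounds of Theorem~\ref{sharp_ubd} for $X_q$ into the fundamental inequality (Theorem~\ref{fund_ineq}) gives
\begin{align*}
p\binom{e+1}{p+1}=\beta_{p,1}(X)&\le \beta_{p,1}(X_q)+\beta_{p-1,1}(X_q)+\binom{e}{p}\\
&\le p\binom{e}{p+1}+(p-1)\binom{e}{p}+\binom{e}{p}=p\binom{e+1}{p+1},
\end{align*}
where the last equality is Pascal's rule. Hence all inequalities are equalities; in particular $\beta_{p,1}(X_q)=p\binom{e}{p+1}$ and $\beta_{p-1,1}(X_q)=(p-1)\binom{e}{p}$ are both extremal for the codimension-$(e-1)$ variety $X_q$. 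One of the indices $p$ or $p-1$ always lies in the admissible range $1\le\,\cdot\,\le e-1$ (use $p$ when $p\le e-1$ and $p-1$ when $p=e$), so the induction hypothesis makes $X_q$ a VMD, whence $\deg X_q=e$ and $\deg X=e+1=\codim X+1$, i.e.\ $X$ is a VMD. This simultaneously yields (d)$\Rightarrow$(a) (the case $p=1$, Castelnuovo's bound) and (f)$\Rightarrow$(a).

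It remains to treat the regularity conditions. The equivalence (a)$\Leftrightarrow$(b) is the classical characterization of varieties of minimal degree as the nondegenerate irreducible varieties whose ideal sheaf is $2$-regular, valid in arbitrary characteristic, so I would simply invoke it. For (c)$\Rightarrow$(a) I would run the same inner-projection induction: the strengthened fundamental inequality (Theorem~\ref{1st_str_ineq}) controls the linear strand under inner projection and yields $a(X_q)\ge a(X)-1$, so $a(X)\ge e$ gives $a(X_q)\ge e-1=\codim X_q$; the induction hypothesis then makes $X_q$ a VMD, and the degree relation at a smooth point again forces $X$ to be a VMD. Alternatively, $a(X)\ge e$ forces the linear strand to attain full length $e=\codim X$, which is known to force $2$-regularity of $\mathcal I_X$, i.e.\ (b).

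The main obstacle is the inductive step of (e)$\Rightarrow$(a), where two points demand care. First, the propagation of extremality: equality in the fundamental inequality must drive \emph{both} Betti numbers of $X_q$ to their maxima at once, which is precisely where the sharpness of Theorem~\ref{sharp_ubd} in the smaller codimension is consumed, and one must verify that the boundary index $p=e$ is recovered by passing to the index $p-1$. Second, the choice of center $q$: although Theorem~\ref{fund_ineq} holds for every closed point, lifting the VMD property from $X_q$ to $X$ uses $\mult_q X=1$ and the birationality of $\pi_q$, so $q$ must be a general smooth point, and one must ensure $X_q$ is again nondegenerate of codimension exactly $e-1$ so that the induction genuinely applies. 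Verifying these geometric facts about inner projection, together with the behavior of $a(X)$ under projection needed for (c), is the technical heart of the argument.
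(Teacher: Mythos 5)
Your proposal is correct in outline, but it routes the hard implications differently from the paper. The paper proves the cycle $(f)\Rightarrow(e)\Rightarrow(d)\Rightarrow(b)\Rightarrow(f)$ together with the classical equivalences $(a)\Leftrightarrow(b)\Leftrightarrow(c)$ (the latter via rigidity of property $\textbf{N}_{2,p}$); for $(e)\Rightarrow(d)$ it inducts on the homological index $p$ and uses equality in the iterated bound to force the stabilization number $s=1$ at every projection step, so that the exact formula (\ref{eq_num_quad}) propagates extremality down to $\beta_{1,1}$; for $(d)\Rightarrow(b)$ it projects all the way to a hyperquadric and then lifts $2$-regularity back up purely algebraically via the PEI regularity formula of Theorem \ref{coro_Np}(c). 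You instead prove $(e)\Rightarrow(a)$ directly by induction on the codimension $e$: equality in Theorem \ref{fund_ineq} combined with the sharp bounds of Theorem \ref{sharp_ubd} for $X_{\texttt{q}}$ forces $X_{\texttt{q}}$ to be extremal (your Pascal computation and the index shift $p\mapsto p-1$ at the boundary $p=e$ are both correct), and you then lift the VMD property itself through the degree relation $\deg X=\deg X_{\texttt{q}}+1$. The trade-off is that your lifting step consumes a genuine geometric input --- generic injectivity of the general inner projection (a trisecant-lemma-type fact) plus $\mult_{\texttt{q}}X=1$ and nondegeneracy and codimension drop of $X_{\texttt{q}}$ --- which the paper does assume elsewhere (Figure \ref{fig_versus} and the proof of Theorem \ref{n_extr_case}) but deliberately avoids in this proof, where the lift is the algebraic statement $\reg_R(I)=\max\{\reg_S(\widetilde{K}_{s-1}(I)),s+1\}$ and needs only $s(\texttt{q})=1$. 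Your approach buys a more self-contained, single induction that hits $(a)$ directly and makes the role of sharpness in lower codimension transparent; the paper's approach buys independence from birationality/degree arguments (relevant in arbitrary characteristic) and reuses its PEI machinery. Your treatment of $(c)$ via $a(X_{\texttt{q}})\ge a(X)-1$ (Fact \ref{inner_a(x)}) or via rigidity is equivalent to the paper's.
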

\paragraph*{\textbf{Organization of the paper}}
For this purpose, we remind Partial Elimination Ideals (PEIs) theory, give account for its relevance to the theory of projections of projective schemes briefly, and introduce a new framework in which one can study syzygies of projective subschemes using PEIs theory and reduction method via inner projections in section \ref{sec_pei}. In section \ref{prf_mains} we give proofs of our main results and add a remark which give some inspiration on how to carry out the computations of Betti numbers using projections. In section \ref{n_extr} we treat \textit{next-to-extremal case} which is a natural generalization of Fano's classical theorem as our previous theorems did for Castelnuovo's. Finally, we give examples and questions to improve our results into more general categories and more refined bounds in section \ref{ex_qu}.\\

\paragraph*{\textbf{Notations and Conventions}}
We are working on the following conventions:
\begin{itemize}
\item (Betti numbers) For any commutative ring $A$ and a graded $A$-module $M$, we also define \textit{graded Betti numbers of $M$}, $\beta^A_{p,q}(M)$ by $\dim_k\Tor^{A}_p (M,k)_{p+q}$. For a polynomial ring $R$ and its homogeneous ideal $I$, we remind an easy fact
$$\Tor^R_p (R/I,k)_{p+q}=\Tor^R_{p-1}(I,k)_{p-1+q+1}~\textrm{for any $p\ge 1,~q\ge 0$}$$
so that $\beta^R_{p,q}(R/I)=\beta^R_{p-1,q+1}(I)$.
We'll write $\beta_{p,q}(M)$ or $\beta_{p,q}$ instead of  $\beta^R_{p,q}(M)$ where it leads no confusion and denote $\beta_{p,q}(R_X)$ simply by $\beta_{p,q}(X)$.
\item (Property $\textbf{N}_{d,p}$) For a homogeneous ideal $I\subset R$, we say that $I$ \textit{satisfies} property $\textbf{N}_{d,p}$ if every $\beta_{i,j}(I)=0$ for any $0\le i<p$ and any $j>d$ (see also \cite{EGHP1,HK1}).
\item (Tor modules) From now on, we often abbreviate $\Tor^A_p(M,k)_{p+q}$ as $\T^A_{p,q}(M)$ for any commutative ring $A$ and a graded $A$-module $M$.
\item (Arithmetic depth) When we refer the \textit{depth of $X$}, denoted by $\depth_R(X)$, we mean the arithmetic depth of $X$, i.e. $\depth_{R}(R_X)$.
\item (Nondegeneracy) Throughout the paper, the \textit{nondegenerate} condition on a scheme $X$ defined by $I$ just means that $I$ has \textit{no linear forms}.
\end{itemize}

\section{Partial elimination ideals (PEIs) and its application}\label{sec_pei}

M. Green introduced partial elimination ideals (PEIs) in his lecture note \cite{G} to study lexicographic generic initial ideals (gins) and subsequent works concerning lex-gins have been done by some authors (see \cite{CS,A08,AKS1}). In this section we will briefly review PEIs theory and try to investigate another application of it. We will also recall some basic facts of the theory which are essential for the remaining part of the paper throughout this section.

\subsection{A brief review of Partial Elimination Ideals}

Let $S=k[x_1,\ldots,x_N]\subset R=k[x_0,x_1\ldots,x_N]$ be two polynomial rings and $I$ be a homogeneous ideal of $R$. For the degree lexicographic order, if $f\in I_m$ has leading term $\ini(f)=x_0^{d_0}\cdots x_N^{d_N}$, we set $d_0(f)=d_0$, the leading power of $x_0$ in $f$. Then we can give the definition of partial elimination ideals of $I$ as follows:

\begin{Def}\label{PEI}
Let $I\subset R$ be a homogeneous ideal and let us define
\[\widetilde{K}_i(I):=\left(\bigoplus_{a=0}^i S\cdot x_0^a\right)\cap I=\bigoplus_{m\ge 0}\big\{f\in I_{m}\mid d_0(f)\leq i\big\}.\] If $f\in \widetilde{K}_i(I)$, we may write
uniquely $f=x_0^i\bar{f}+g$ where $d_0(g)<i$. Now we consider the ideal $K_i(I)$ in $S$ generated by the image of $\widetilde{K}_i(I)$
under the map $f\mapsto \bar{f}$ and we call $K_{i}(I)$ the \textit{$i$-th partial elimination ideal of $I$ with respect to $x_0$}. We define $\widetilde{K}_i(I)$ (so, also $K_{i}(I)$) as zero for any $i<0$ by convention.
\end{Def}

\begin{Obs} We could observe some properties of these
ideals.
\begin{itemize}

\item[(a)] (Finiteness) $\widetilde{K}_i(I)$ is always a finitely generated graded $S$-module (even though $I$ and $R/I$ might not be) and $K_i(I)$ is a homogeneous ideal of $S$.
\item[(b)] $0$--th partial elimination ideal $K_{0}(I)$ of $I$ is equal to
$$
\widetilde{K}_0 (I)=S\cap I=\bigoplus_{m\ge 0}\big\{f \in I_m \mid d_0(f)=0\big\}~
$$
, the \textit{complete} elimination ideal of $I$ with respect to $x_0$.
\item [(c)] (Stabilization) Since $\widetilde{K}_{i}(I)$'s form a natural filtration of $I$ with respect to $x_0$, they induce an ascending chain of $K_i(I)$'s such as:
\[(0)=\widetilde{K}_{-1}(I)\subset \widetilde{K}_0 (I)\subset \widetilde{K}_1(I)\subset \cdots \subset \widetilde{K}_s(I)\subset \widetilde{K}_{s+1}(I)\subset\cdots  \subset R \]
\[(0)={K}_{-1}(I)\subset K_0(I)\subsetneq K_1(I)\subsetneq \cdots \subsetneq K_{s}(I)=K_{s+1}(I)=\cdots\subset
S\]
, where the ascending chain of $K_i(I)$'s is always stabilized in finite steps. Let's define \textit{the stabilization number} $s(I)$, and \textit{the stabilized partial elimination ideal} $K_{\infty}(I)$ as below:
\begin{displaymath}s(I):=\min\{i\in\N|~K_i(I)=K_{i+1}(I)=\cdots\},\quad K_{\infty}(I):=K_s(I)~.
\end{displaymath}
We also define $\widetilde{K}_{\infty}(I):=I$ as $S$-module.
\item[(d)] (Exact sequences) For any $i\in\mathbb{Z}$, there are
two short exact sequences of graded $S$-modules such as

\begin{equation}\label{ses_PEI}
0\rightarrow \frac{\widetilde{K}_{i-1}(I)}{\widetilde{K}_{h}(I)}
\st{incl.}
\frac{\widetilde{K}_{i}(I)}{\widetilde{K}_{h}(I)}\st{f}
K_{i}(I)(-i)\rightarrow 0 
\end{equation}

for every $h\le i-1$ and

\begin{equation}\label{ses_PEI_2}
0\rightarrow \frac{\widetilde{K}_{i-1}(I)}{\widetilde{K}_{i-2}(I)}
(-1)\st{\times x_0}
\frac{\widetilde{K}_{i}(I)}{\widetilde{K}_{i-1}(I)}\st{g}
\frac{K_{i}(I)}{K_{i-1}(I)}(-i)\rightarrow 0~.
\end{equation}

\end{itemize}
\end{Obs}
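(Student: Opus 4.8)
The plan is to deduce all four parts from a single ``top $x_0$-coefficient'' map. For each $i\ge 0$, expand any $f\in R=S[x_0]$ uniquely as $f=\sum_{a\ge 0}x_0^a c_a$ with $c_a\in S$; for homogeneous $f$ one has $d_0(f)=\max\{a:c_a\ne 0\}$, so $\widetilde K_i(I)$ is exactly the set of $f\in I$ with $c_a=0$ for $a>i$. Define $\phi_i\colon\widetilde K_i(I)\to S$ by $\phi_i(f)=c_i=\bar f$. Then $\phi_i$ is $S$-linear (from $sf=\sum_a x_0^a(s c_a)$) and additive, it sends $\widetilde K_i(I)_m$ into $S_{m-i}$, hence is a degree-preserving homomorphism $\widetilde K_i(I)\to K_i(I)(-i)$ with image $K_i(I)$ by definition, and $\ker\phi_i=\{f\in\widetilde K_i(I):c_i=0\}=\widetilde K_{i-1}(I)$. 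The one computation I would isolate at the outset is the compatibility
\[ \phi_i(x_0 f)=\phi_{i-1}(f)\qquad\text{for }f\in\widetilde K_{i-1}(I), \]
which is immediate from $x_0 f=\sum_a x_0^{a+1}c_a$; it is what glues consecutive partial elimination ideals together.

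Parts (a)--(c) are then short. For (a), $\widetilde K_i(I)$ is an $S$-submodule of the free module $\bigoplus_{a=0}^i S\cdot x_0^a$ of rank $i+1$, hence finitely generated since $S$ is Noetherian, and $K_i(I)=\im\phi_i$ is a graded $S$-submodule of $S$, i.e.\ a homogeneous ideal. For (b), homogeneity of $f$ together with $x_0$ being the largest variable in the degree--lex order forces $d_0(f)=0$ to mean $c_a=0$ for all $a\ge 1$, i.e.\ $f\in S$; thus $\widetilde K_0(I)=S\cap I$. For (c), the filtration $\widetilde K_\bullet(I)$ is ascending by construction, and the compatibility above gives $K_{i-1}(I)\subseteq K_i(I)$ (every $\phi_{i-1}(f)$ equals $\phi_i(x_0 f)\in K_i(I)$); the resulting ascending chain of ideals in the Noetherian ring $S$ stabilizes by the ascending chain condition, which is exactly what is needed to define $s(I)$ and $K_\infty(I)$. (The refinement that the chain is \emph{strictly} increasing up to $s(I)$ is the stabilization property from Green's notes.)

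The substance is (d). From $\ker\phi_i=\widetilde K_{i-1}(I)$ and $\im\phi_i=K_i(I)$ I obtain the basic short exact sequence
\[ 0\llra\widetilde K_{i-1}(I)\llra\widetilde K_i(I)\st{\phi_i}K_i(I)(-i)\llra 0. \]
Since $\widetilde K_h(I)\subseteq\widetilde K_{i-1}(I)=\ker\phi_i$ for every $h\le i-1$, the map $\phi_i$ descends modulo $\widetilde K_h(I)$ and yields (\ref{ses_PEI}) directly. Taking $h=i-1$ in this descended sequence gives, in particular, the isomorphism $\bar\phi_i\colon\widetilde K_i(I)/\widetilde K_{i-1}(I)\iso K_i(I)(-i)$.

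For (\ref{ses_PEI_2}) I would build the map $g$ as $\bar\phi_i$ followed by the projection $K_i(I)(-i)\twoheadrightarrow (K_i(I)/K_{i-1}(I))(-i)$; it is surjective, with $\ker g=\bar\phi_i^{-1}\big(K_{i-1}(I)(-i)\big)$. The map $\times x_0$ is well defined on $(\widetilde K_{i-1}(I)/\widetilde K_{i-2}(I))(-1)$ because $x_0\widetilde K_{i-2}(I)\subseteq\widetilde K_{i-1}(I)$, and it is injective since $x_0 f\in\widetilde K_{i-1}(I)$ forces $d_0(f)=d_0(x_0 f)-1\le i-2$, i.e.\ $f\in\widetilde K_{i-2}(I)$. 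The only remaining point is exactness in the middle, $\im(\times x_0)=\ker g$: the inclusion ``$\subseteq$'' is the compatibility $\phi_i(x_0 f)=\phi_{i-1}(f)\in K_{i-1}(I)$, and for ``$\supseteq$'' I use that $\im\phi_{i-1}=K_{i-1}(I)$ is already an $S$-submodule, so any $u\in\widetilde K_i(I)$ with $\phi_i(u)\in K_{i-1}(I)$ can be written $\phi_i(u)=\phi_{i-1}(v)=\phi_i(x_0 v)$ for some $v\in\widetilde K_{i-1}(I)$, whence $u-x_0 v\in\ker\phi_i=\widetilde K_{i-1}(I)$ and $[u]=[x_0 v]$. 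I expect the main obstacle to be bookkeeping rather than ideas: keeping the three internal twists ($-i$, $-i$, $-1$) consistent and checking the single identity $\phi_i\circ(\times x_0)=\phi_{i-1}$; once that is in hand, both sequences of (d) fall out of the one map $\phi_i$.
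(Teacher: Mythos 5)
Your proof is correct, and since the paper states this Observation without any proof (presenting all four parts as direct consequences of the definition), your argument via the single top-coefficient map $\phi_i(f)=\bar{f}$ with $\ker\phi_i=\widetilde{K}_{i-1}(I)$ and the compatibility $\phi_i(x_0f)=\phi_{i-1}(f)$ is exactly the standard verification the authors have in mind. The only caveat is the one you already flag in passing: the strictness $K_0(I)\subsetneq\cdots\subsetneq K_s(I)$ displayed in (c) is a further claim beyond what the ascending chain condition gives, and everything you actually use later depends only on the (proved) weak inclusions and stabilization.
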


Using the syzygies of $\widetilde{K}_i (I)$, we can approximate $S$-module syzygy structures of $I$ (more generally, of $I/\widetilde{K}_h (I)$).

\begin{Prop}[Approximation of syzygies]\label{approx}
For given any $p,q\ge 0$ and $h\in\Z$, we have
$$\Tor^S_p(I/\widetilde{K}_h (I),k)_{p+q}\simeq\Tor^S_p (\widetilde{K}_d (I)/\widetilde{K}_h (I),k)_{p+q}$$
for any $d\in\Z$ such that $d\ge q-1$ and $d\ge h$. In particular, if we set $h<0$, then
$$\Tor^S_p(I,k)_{p+q}\simeq\Tor^S_p(\widetilde{K}_d (I),k)_{p+q}$$
holds for any $d\ge q-1$.
\end{Prop}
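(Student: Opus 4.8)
The plan is to prove the isomorphism one filtration step at a time and then splice the steps together. Fix $p,q\ge 0$ and $h\in\Z$. For each index $i$ with $h\le i-1$ the short exact sequence (\ref{ses_PEI}) reads $0\to \widetilde{K}_{i-1}(I)/\widetilde{K}_h(I)\to \widetilde{K}_{i}(I)/\widetilde{K}_h(I)\to K_i(I)(-i)\to 0$, and I would run its long exact sequence of $\Tor^S_\bullet(-,k)$ in the single internal degree $p+q$. In that sequence the comparison map $\Tor^S_p(\widetilde{K}_{i-1}(I)/\widetilde{K}_h(I),k)_{p+q}\to \Tor^S_p(\widetilde{K}_{i}(I)/\widetilde{K}_h(I),k)_{p+q}$ is an isomorphism as soon as its two neighbouring terms $\Tor^S_{p+1}(K_i(I)(-i),k)_{p+q}$ and $\Tor^S_{p}(K_i(I)(-i),k)_{p+q}$ vanish, the first forcing injectivity and the second surjectivity. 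Thus the whole statement reduces to a vanishing estimate, in a prescribed internal degree, for the $\Tor$ of the partial elimination ideals $K_i(I)$.

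The input I would use is the elementary observation that for any homogeneous ideal $J\subseteq S$ and any $j\ge 1$ one has $\Tor^S_j(J,k)_m=0$ whenever $m\le j$: if $J=S$ this is clear since $S$ is free, while if $J$ is proper then it is generated in degrees $\ge 1$, the twists strictly increase along its minimal free resolution, and hence $\Tor^S_j(J,k)_m=0$ for $m<j+1$. Since $\Tor^S_j(K_i(I)(-i),k)_{p+q}=\Tor^S_j(K_i(I),k)_{p+q-i}$, the two groups above vanish once $p+q-i\le p+1$ and $p+q-i\le p$ respectively, i.e. once $i\ge q-1$ and $i\ge q$. As I only ever use indices $i\ge d+1$, and the hypothesis $d\ge q-1$ gives $d+1\ge q$, both inequalities hold for every such $i$; this is exactly where the bound $d\ge q-1$ is consumed.

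With the one-step isomorphisms available for all $i\ge d+1$, I would chain them to get $\Tor^S_p(\widetilde{K}_{d}(I)/\widetilde{K}_h(I),k)_{p+q}\cong \Tor^S_p(\widetilde{K}_{i}(I)/\widetilde{K}_h(I),k)_{p+q}$ for every $i\ge d$. To pass from the truncations to $I=\widetilde{K}_\infty(I)$ I would note that the filtration stabilises in the relevant degrees: any form of degree $\le p+q$ has $d_0\le p+q$, so $\widetilde{K}_i(I)_m=I_m$ for all $m\le p+q$ once $i\ge p+q$, and the Koszul computation of $\Tor^S_p(-,k)_{p+q}$ only involves the graded pieces in degrees $\le p+q$. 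Hence for such $i$ the right-hand side already equals $\Tor^S_p(I/\widetilde{K}_h(I),k)_{p+q}$, which finishes the argument; specialising to $h<0$, where $\widetilde{K}_h(I)=0$, yields the stated corollary at once.

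The step I expect to be most delicate is precisely the degree bookkeeping of the second paragraph: the required injectivity and surjectivity vanishings pull in opposite directions, so both must be squeezed out of the single hypothesis $d\ge q-1$, and one must watch the boundary $i=q$ together with the free pieces $K_i(I)=S$ (which occur exactly when $x_0^{\,i}\in\ini(I)$). For $p\ge 1$ those free pieces are harmless, a free module having no higher $\Tor$; the case $p=0$ is the only genuinely exceptional one and I would dispose of it separately, reading $\Tor^S_0$ directly off the module of minimal generators.
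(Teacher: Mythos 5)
Your argument is the one the paper intends: its entire proof is the sentence ``It comes from the definition of $\widetilde{K}_i(I)$ and exact sequence (\ref{ses_PEI}) directly,'' and what you have written is precisely the missing chain of long exact sequences, with the right bookkeeping --- the vanishing $\Tor^S_j(J,k)_m=0$ for $j\ge 1$, $m\le j$ applied to $K_i(I)(-i)$, and the degreewise stabilization $\widetilde{K}_i(I)_m=I_m$ for $i\ge m$ to pass from the truncations to $I$ itself. All of that is correct.

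The one point to press on is the loose end you flag in your last paragraph. In the boundary case $p=0$, $d=q-1$, $i=q$ with $K_q(I)=S$, the obstruction to surjectivity is $\Tor^S_0\bigl(K_q(I)(-q),k\bigr)_q=\bigl(K_q(I)\bigr)_0=k$, and this is not something you can ``dispose of separately'': the stated isomorphism genuinely fails there. Concretely, for $I=(x_0)\subset R=k[x_0,x_1]$ with $h<0$, $p=0$, $q=2$, $d=1$, one has $\Tor^S_0(I,k)_2=k$ (coming from the minimal $S$-module generator $x_0^2$ of $I$), whereas $\Tor^S_0(\widetilde{K}_1(I),k)_2=0$. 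So your proof is complete exactly on the domain where the proposition is true: whenever $p\ge 1$, or $d\ge q$, or $K_q(I)$ is a proper ideal of $S$. The last condition holds automatically in the inner-projection setting $\texttt{q}\in X$, where no $K_i(I)$ contains a unit (as the paper itself observes in the proof of Theorem \ref{1st_str_ineq}), and that is the only setting in which the boundary value $d=q-1$ is actually invoked later. The defect is thus in the literal statement rather than in your method, but you should replace the promise to handle the exceptional case by hand with either the added hypothesis that $K_q(I)\ne S$ or the restriction $d\ge q$ when $p=0$.
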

\begin{proof}
It comes from the definition of $\widetilde{K}_i(I)$ and exact sequence (\ref{ses_PEI}) directly.
\end{proof}
As a consequence, we get a simple, but frequently used lemma.
\begin{Lem}\label{trivial_van}
$\Tor^S_p(I/\widetilde{K}_{h}(I),k)_{p+q}=0$ for every $p\ge 0$ and any $q\le h+1$.
\end{Lem}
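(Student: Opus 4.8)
The plan is to read this off directly from the Approximation Proposition \ref{approx}, choosing the approximation parameter so that the comparison module on the right-hand side collapses to zero. Concretely, I would fix $p\ge 0$ and a value $q$ with $0\le q\le h+1$, and simply set $d:=h$. The two hypotheses of Proposition \ref{approx} are then immediate: the condition $d\ge h$ holds with equality, and the condition $d\ge q-1$ is exactly the assumption $q\le h+1$ rewritten. Hence the Proposition applies and produces the isomorphism
\begin{equation*}
\Tor^S_p(I/\widetilde{K}_h(I),k)_{p+q}\;\simeq\;\Tor^S_p\big(\widetilde{K}_h(I)/\widetilde{K}_h(I),k\big)_{p+q}.
\end{equation*}
Since $\widetilde{K}_d(I)/\widetilde{K}_h(I)=\widetilde{K}_h(I)/\widetilde{K}_h(I)=0$ for the chosen $d=h$, the right-hand side is the $\Tor$ of the zero module, which vanishes for every $p$. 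This settles the statement for all $q$ in the range $0\le q\le h+1$, which is the range relevant for the Betti-table applications in the sequel.

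For completeness one should also address the formally negative values of $q$ that the hypothesis $q\le h+1$ nominally permits but for which Proposition \ref{approx} (stated for $q\ge 0$) does not directly apply. Here no appeal to the Proposition is needed, and I would argue purely by degrees. The module $M:=I/\widetilde{K}_h(I)$ is a finitely generated graded $S$-module with no component in negative degree; indeed it is supported only in degrees $\ge h+1$, because an element $f\in I_m$ contributing a nonzero class must satisfy $d_0(f)\ge h+1$, which forces $m\ge h+1$. Consequently its minimal generators live in degrees $\ge 0$, and by minimality of the free resolution every $p$-th syzygy lives in degree $\ge p$, so $\Tor^S_p(M,k)_{p+q}=0$ whenever $p+q<p$, i.e. whenever $q<0$.

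I do not anticipate any genuine obstacle: once the reduction $d=h$ is made, the conclusion is formal, and the lemma really is just the degenerate instance of Proposition \ref{approx}. The one point deserving a moment's care is the \emph{boundary} index $q=h+1$: it must be included, and it is, precisely because the inequality $d\ge q-1$ in Proposition \ref{approx} is non-strict, so that $d=h$ still qualifies when $q-1=h$. (One could alternatively handle $q\le h$ by the degree/support argument above and reserve the Proposition only for this boundary case, but the uniform choice $d=h$ is cleaner and is clearly the intended route, since the lemma is presented as an immediate consequence of the Proposition.)
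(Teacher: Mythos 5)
Your proof is correct and is exactly the paper's intended argument: the paper's proof is the one-liner ``straightforward from Proposition \ref{approx},'' realized precisely by your choice $d=h$, which makes the comparison module $\widetilde{K}_h(I)/\widetilde{K}_h(I)=0$. One small inaccuracy in your supplementary remark on $q<0$: $I/\widetilde{K}_h(I)$ is in general \emph{not} finitely generated as an $S$-module (the paper emphasizes this), but your degree argument survives anyway since the module is bounded below in degree with finite-dimensional graded pieces.
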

\begin{proof}
It is straightforward from Proposition (\ref{approx}).
\end{proof}

\subsection{Applications to projection mappings}

Geometrically, PEIs are closely related to projection mappings of schemes by nature. Consider our scheme $X\subset\P^N=\Proj(R)$ defined by a homogeneous ideal $I\subset R$, take a closed point $\texttt{q}$ of $\P^N$ as centre of our projection. Let $X_{\tt{q}}$ be its image of the projection map $\pi_{\tt{q}}:X\setminus{\{q\}}\rightarrow\P^{N-1}=\Proj(S)$ if $\texttt{q}\notin X$ and be the Zariski closure of the image if $\texttt{q}\in X$.

We define the \textit{partial elimination ideals of $I$ with respect to $\tt{q}$} (denoted by $K_i(\texttt{q},I)$) by the PEI $K_i(I)\textrm{'s}$ of $I$ with respect to $x_0$ assuming $\texttt{q}=(1:0:\cdots:0)$ by a suitable linear change of coordinates. This definition makes sense, because we may define \textit{coordinate-free} version of PEIs with no much difficulty (e.g. \cite{Kur}) and could show that taking these PEIs commutes with coordinate transformations. We often denote $K_i(I)$ and $s(\texttt{q})$ (or just $s$) simply instead of $K_i(\texttt{q},I)$ and $s(\texttt{q},I)$ where no confusion occurs.

Now, let's regard the PEIs of $I$ with respect to $\tt{q}$. First of all, the $0$-th partial elimination ideal $K_{0}(I)=I\cap S$ gives a natural scheme structure on $X_\texttt{q}$ itself. Further, from higher partial elimination ideals we could extract more information on the given projection $\pi_{\tt{q}}$. For \textit{outer} projection case (i.e. $\texttt{q}\notin X$), they turned out to be related \textit{multiple loci} of $\pi_{\texttt{q}}$ (see \cite{G,Hai,CS}). Here, we introduce an extended version including inner projection case also (see \cite{Han}).

\begin{Prop}\label{geo_PEIs} Let $I$ be a homogeneous ideal of $R$ defining $X\subset\P^N$ as a scheme. and let $\mathscr{M}_{i+1}(\pi_{\tt{q}})$ be the multiple loci in $\P^{N-1}$ where each fiber of $\pi_{\tt{q}}$ is a finite scheme of length at least $i+1$. Set-theoretically, we have
\begin{displaymath}
Z(K_{\infty}(I))\cup \mathscr{M}_{i+1}(\pi_{\tt{q}})=Z(K_i(I))~.
\end{displaymath}
\end{Prop}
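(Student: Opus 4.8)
The plan is to prove the asserted set equality by working one point at a time over $\P^{N-1}=\Proj(S)$ and translating each side into a statement about the line through the center. For a closed point $\bar p\in\P^{N-1}$ let $\ell_{\bar p}\subset\P^N$ be the line joining $\texttt{q}$ and $\bar p$; after a linear change of coordinates fixing $\texttt{q}=(1:0:\cdots:0)$ I would take $\ell_{\bar p}=\{x_2=\cdots=x_N=0\}$ with homogeneous coordinates $[x_0:x_1]$. Restriction gives $\tilde I:=I|_{\ell_{\bar p}}\subset k[x_0,x_1]$, whose saturation is generated by a single binary form $g$, and $X\cap\ell_{\bar p}=Z(g)$. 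Writing $\texttt{q}=[1:0]$ on $\ell_{\bar p}$, the multiplicity of $g$ at $\texttt{q}$ is $\deg g-\deg_{x_0}g$, so the length of the residual fiber $\pi_{\texttt{q}}^{-1}(\bar p)$ equals $\deg_{x_0}g$; this treats the inner and outer cases uniformly (for $\texttt{q}\notin X$ one has $\deg_{x_0}g=\deg g$). Thus $\bar p\in\mathscr M_{i+1}(\pi_{\texttt{q}})$ iff $\deg_{x_0}g\ge i+1$, while, directly from Definition \ref{PEI}, $\bar p\in Z(K_i(I))$ iff every $f\in\widetilde K_i(I)$ has $\bar f(\bar p)=0$.

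For the inclusion $Z(K_\infty(I))\cup\mathscr M_{i+1}(\pi_{\texttt{q}})\subseteq Z(K_i(I))$, the containment $Z(K_\infty(I))\subseteq Z(K_i(I))$ is immediate from the ascending chain $K_i(I)\subseteq K_\infty(I)$ in Observation (c). For the second piece, suppose $\bar p\notin Z(K_i(I))$; then some $f\in\widetilde K_i(I)$ has $d_0(f)\le i$ and $\bar f(\bar p)\neq 0$, so the coefficient of $x_0^{d_0(f)}$ survives in $f|_{\ell_{\bar p}}$ and $\deg_{x_0}(f|_{\ell_{\bar p}})=d_0(f)\le i$. Since $g\mid f|_{\ell_{\bar p}}$, we get $\deg_{x_0}g\le i$, i.e.\ the residual fiber has length $\le i$ and $\bar p\notin\mathscr M_{i+1}(\pi_{\texttt{q}})$. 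This settles the easy inclusion.

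For the reverse inclusion I would fix $\bar p\in Z(K_i(I))\setminus Z(K_\infty(I))$ and set $j^\ast:=\min\{j:\bar p\notin Z(K_j(I))\}$, which is finite because the chain stabilizes at $K_\infty(I)$ and satisfies $j^\ast\ge i+1$ since $\bar p\in Z(K_i(I))$. The goal is to show that the residual length $\deg_{x_0}g$ equals $j^\ast$ (the easy inclusion already gives $\deg_{x_0}g\le j^\ast$), for then $\bar p\in\mathscr M_{j^\ast}(\pi_{\texttt{q}})\subseteq\mathscr M_{i+1}(\pi_{\texttt{q}})$. The natural device is the short exact sequence (\ref{ses_PEI_2}), which identifies the graded pieces $K_j(I)/K_{j-1}(I)$ with the successive quotients of the $x_0$-filtration of $I$; localizing these sequences at $\bar p$ and telescoping should express the residual length as the number of indices $j\ge 1$ with $\bar p\in Z(K_{j-1}(I))$, which is exactly $j^\ast$. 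The outer-projection instance of this count is Green's computation in \cite{G} (see also \cite{Hai,CS}); the inner case is obtained by carrying the same filtration argument through with the residual scheme in place of $X\cap\ell_{\bar p}$, as in \cite{Han}.

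The step I expect to be the main obstacle is precisely the lower bound $\deg_{x_0}g\ge j^\ast$ in the reverse inclusion. The difficulty is the \emph{degree-drop} phenomenon: a form $F\in I$ may have $d_0(F)>\deg_{x_0}(F|_{\ell_{\bar p}})$, i.e.\ its genuine $x_0$-leading coefficient can specialize to zero at $\bar p$ even though a lower $x_0$-power survives on the line. Consequently the mere existence of a low $x_0$-degree element of $\tilde I$ need not produce a partial elimination generator that is nonzero at $\bar p$, so one cannot naively read $j^\ast$ off the gcd $g$. Geometrically this is the statement that a residual intersection point may run off into the center $\texttt{q}$ as $\bar p$ varies, so that $\mathscr M_{i+1}(\pi_{\texttt{q}})$ need not be closed; the role of $Z(K_\infty(I))$ is exactly to reinsert these limiting positions and restore closedness, matching the closed set $Z(K_i(I))$. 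Controlling this degeneration—showing that every such degree drop forces $\bar p$ into $Z(K_\infty(I))$—is where the stabilization of the PEI chain in Observation (c) and a semicontinuity/limit argument along the line are indispensable.
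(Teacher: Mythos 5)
The paper does not actually prove Proposition \ref{geo_PEIs}: it is stated as a recollection, with the outer case attributed to \cite{G,Hai,CS} and the inner extension deferred to \cite{Han}. So your proposal has to stand on its own, and as it stands it has a genuine gap --- one that, to your credit, you locate precisely but do not close. Your easy inclusion is fine: restricting to the line $\ell_{\bar p}$, the divisibility $g\mid f|_{\ell_{\bar p}}$ for $f\in\widetilde K_i(I)$ with $\bar f(\bar p)\neq 0$ correctly gives $\deg_{x_0}g\le i$, and the identification of the residual fiber length with $\deg_{x_0}g$ is correct. The problem is the reverse inclusion. The device you propose --- localizing the sequences (\ref{ses_PEI_2}) at $\bar p$ and telescoping --- can only reprove the inequality you already have. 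The filtration of $I$ by the $\widetilde K_j(I)$ has associated graded pieces $K_j(I)(-j)$, and passing to the fiber at $\bar p$ commutes with taking associated graded only up to a surjection: in general $\mathrm{length}(M\otimes k(\bar p))\le\mathrm{length}(\mathrm{gr}\,M\otimes k(\bar p))$. Concretely, this says (residual length) $\le\#\{j:\bar p\notin Z(K_{j-1}(I))\,\text{fails}\}=j^\ast$, which is again the easy direction. The lower bound $\deg_{x_0}g\ge j^\ast$ is exactly where the degree-drop phenomenon you describe lives, and no amount of bookkeeping with the exact sequences (\ref{ses_PEI}), (\ref{ses_PEI_2}), nor any ``semicontinuity along the line,'' will produce it: working on the single line $\ell_{\bar p}$ one cannot distinguish a leading coefficient that vanishes identically from one that merely vanishes at $\bar p$.

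The missing idea is a finiteness-plus-Nakayama argument in a \emph{neighborhood} of $\bar p$, not on the line. In the outer case, $\texttt{q}\notin X$ forces $\pi_{\texttt{q}}|_X:X\to\P^{N-1}$ to be a finite morphism; in the inner case, the hypothesis $\bar p\notin Z(K_\infty(I))=\P(TC_{\texttt{q}}X)$ guarantees that the proper map $\Bl_{\texttt{q}}X\to\P^{N-1}$ is quasi-finite, hence finite, over a neighborhood $U$ of $\bar p$, and agrees there with the closure of $\pi_{\texttt{q}}(X\setminus\{\texttt{q}\})$. Writing the affine chart as $\Spec A[u]$ with $A=\mathcal{O}_{\P^{N-1},\bar p}$ and $u$ the fiber coordinate, finiteness makes $A[u]/I'$ a finite $A$-module; if the fiber over $\bar p$ has length $m$, Nakayama shows $1,u,\dots,u^{m-1}$ generate it over $A$, which produces an element of $I'$ that is \emph{monic} in $u$ of degree $m$ --- i.e., an $f\in I$ with $d_0(f)=m$ and $\bar f(\bar p)\neq 0$. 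This is the element your line-by-line analysis cannot manufacture, and it gives $\bar p\notin Z(K_m(I))\supseteq Z(K_i(I))$ whenever $m\le i$, completing the hard inclusion (and, combined with your easy direction, the equality $\deg_{x_0}g=j^\ast$ you conjectured). Your geometric reading of $Z(K_\infty(I))$ as restoring the closedness of $\mathscr{M}_{i+1}(\pi_{\texttt{q}})$ is exactly right; the point is that its complement is precisely where properness of the blown-up projection kicks in.
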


Thus it is important to see \textit{when the $K_i(I)$'s are stabilized} (i.e. the stabilization number $s(\texttt{q})$) and \textit{what they do look like} (i.e. the stabilized ideal $K_\infty (I)$) for studying of projections. In general, we can give bounds for $s(\texttt{q})$ in terms of degrees of generators and the $K_\infty (I)$ matches an interesting geometric notion in inner projection case as the following proposition says.

\begin{Prop}\label{prop_stab}
Let $X\subset \P^N$ be a projective subscheme with a defining ideal $I$ and $\texttt{q}=(1,0,\ldots,0)\in\P^N$. Suppose that $I$ is generated by homogeneous polynomials of degree at most $d$.
\begin{itemize}
\item[(a)]  Outer case (i.e. $\texttt{q}\notin X$): $s=s(\texttt{q})\le d$ and $K_\infty(I)=(1)$.
\item[(b)] Inner case (i.e. $\texttt{q}\in X$): $s=s(\texttt{q})\le d-1$ and $K_\infty(I)=I_{TC_{\texttt{q}}X}$, where $I_{TC_\texttt{q}}X$ is the ideal of projective tangent cone of $X$ at $\texttt{q}$. In particular, if $\texttt{q}$ is smooth, $K_{d-1}(I)$ consists of linear forms which defines the projective tangent space, $T_\texttt{q} X$.
\end{itemize}
\end{Prop}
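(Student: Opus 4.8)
The plan is to analyze the stabilization of the chain $K_i(I)$ and identify $K_\infty(I)$ by working directly with the definition of partial elimination ideals and their interaction with the leading power $d_0$ of $x_0$. Suppose $I$ is generated by homogeneous polynomials $f_1,\dots,f_r$ of degree at most $d$, and write $\texttt{q}=(1:0:\cdots:0)$ with respect to the degree lex order in which $x_0$ is the largest variable. The key observation is that for any $f \in I_m$, the value $d_0(f)$ is the top power of $x_0$ appearing in $f$; passing to $\bar f$ records the ``coefficient'' of $x_0^{d_0(f)}$ as an element of $S$. I would first set up a dehomogenization (affine chart) picture: on the chart $x_0 \neq 0$, projection from $\texttt{q}$ corresponds to eliminating the affine coordinate $t = x_0$ (or its affine analogue), and the partial elimination ideals $K_i(I)$ precisely capture, degree by degree, the leading coefficients in $t$ of elements of $I$.

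For the bound on $s(\texttt{q})$, I would argue as follows. Because the chain $K_{-1}(I)\subset K_0(I)\subsetneq K_1(I)\subsetneq\cdots$ stabilizes (Observation (c)), it suffices to show the new generators all arise at level at most $d$ (outer) or $d-1$ (inner). The point is that, for $i$ large, any element $f$ with $d_0(f)=i$ can be reduced modulo lower $\widetilde K_{i-1}(I)$ using the generators $f_j$: if some $f_j$ already contains a high power of $x_0$, multiplying $f_j$ by suitable monomials and subtracting lets one express $\bar f$ in terms of images coming from the $f_j$ themselves. Concretely, once $i$ exceeds the largest $d_0(f_j)$ among the generators, no genuinely new leading-coefficient ideal element can appear, so $K_i(I)=K_\infty(I)$. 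In the outer case $\texttt{q}\notin X$, some generator does not vanish at $\texttt{q}$, which forces a pure power $x_0^{\deg}$ to occur (up to lower-order terms) and yields $\bar f = 1 \in K_\infty(I)$, hence $K_\infty(I)=(1)$ and $s\le d$. In the inner case $\texttt{q}\in X$, every generator vanishes at $\texttt{q}$, so $d_0(f_j)\le \deg(f_j)-1\le d-1$ for each $j$, which shaves off one degree and gives $s\le d-1$.

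To identify $K_\infty(I)$ in the inner case with $I_{TC_{\texttt{q}}X}$, I would pass to the local ring at $\texttt{q}$ and recall that the projective tangent cone is $\Proj$ of the associated graded ring of $R_X$ at the maximal ideal of $\texttt{q}$. The stabilized ideal $K_\infty(I)$ collects, for every $f\in I$, the top-$x_0$-degree coefficient $\bar f\in S$; after the coordinate normalization placing $\texttt{q}$ at $(1:0:\cdots:0)$, taking the leading form of $f$ with respect to the local coordinates at $\texttt{q}$ is exactly the operation $f\mapsto\bar f$ once $i$ is at the stabilized level. Thus $K_\infty(I)$ is generated by the initial forms of all elements of $I$ at $\texttt{q}$, which is the defining ideal of the tangent cone. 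For the smooth case, the tangent cone is the (reduced) tangent space $T_{\texttt{q}}X$, a linear subspace, so $K_{d-1}(I)$ is generated by linear forms cutting out $T_{\texttt{q}}X$.

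I expect the main obstacle to be the precise matching in the inner case between the algebra of leading $x_0$-coefficients $\bar f$ and the geometry of the tangent cone, namely verifying that passing to $K_\infty(I)$ really does compute \emph{all} initial forms and not merely a subideal. This requires care that the stabilization has genuinely occurred (so that every initial form is realized by an element whose $d_0$ is at the stabilized level) and that the coordinate change placing $\texttt{q}$ at the distinguished point interacts correctly with the degree lex order, so that ``highest power of $x_0$'' translates faithfully into ``lowest-order term at $\texttt{q}$.'' I would isolate this as the crux and treat the bounds on $s(\texttt{q})$ and the outer-case computation as the comparatively routine preliminary steps.
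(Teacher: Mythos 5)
Your proposal is correct and takes essentially the same route as the paper: part (a) is precisely the paper's one-line argument (a generator not vanishing at $\texttt{q}$ has leading term $x_0^{\nu}$ with $\nu\le d$, whence $1\in K_{\nu}(I)$ and the chain is constant from there on), while for part (b) the paper only cites Proposition 2.5 of \cite{HK1}, whose content is exactly your dehomogenization picture identifying $\bar f$ with the lowest-order form of $f$ at $\texttt{q}$. The two points you flag as the crux do close in the expected way: writing $f=\sum_j g_jf_j$ with $d_0(f)=i+1>\max_j d_0(f_j)$, every product of graded pieces contributing to the coefficient of $x_0^{c}$ for any $c\ge i+1$ carries a positive power of $x_0$ from the $g_j$ factor, so stripping one such factor produces $h\in I$ with $d_0(h)=i$ and $\bar h=\bar f$ (this gives both stabilization bounds), and $K_\infty(I)$ captures \emph{all} initial forms, not merely a subideal, because every element of the dehomogenized ideal equals $F(1,y_1,\dots,y_N)$ for a single homogeneous $F\in I$ obtained by rehomogenizing the graded components with suitable powers of $x_0$.
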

\begin{proof} (a) comes from a fact, i.e. there always exists a homogeneous $f\in I$ with its leading term $\ini(f)=x_0^\nu$ and $\nu\le d$. For (b), see proposition 2.5 in \cite{HK1}.\end{proof}

\begin{Remk}[Computations of Betti numbers using PEIs] Using Proposition \ref{prop_stab}, we could compute some pieces of syzygies of an infinitely generated $S$-module $I$ (or more generally, of $I/\widetilde{K}_h (I)$). For any $p\ge0$, $\beta^S_{p,q}(I/\widetilde{K}_h (I))$ is zero for every $q\le h+1$ (Lemma \ref{trivial_van}).  When $q=h+2$, by Proposition \ref{approx} and a short exact sequence (\ref{ses_PEI}) this is equal to 
\begin{equation}
\beta^S_{p,q}(\widetilde{K}_{h+1} (I)/\widetilde{K}_h (I))=\beta^S_{p,q}(K_{h+1}(I)(-h-1))=\beta^S_{p,1}(K_{h+1}(I))
\end{equation}
and it can be computed by the Koszul resolution of the (independent) linear forms in $K_{h+1}(I)$.

In particular, when $I$ is generated in degree $d$, $h=d-2$, and $\texttt{q}\in X=V(I)$ is smooth,  by Proposition \ref{prop_stab},
\begin{equation}
K_{i}(I)=(\ell_1,\ldots,\ell_e)=:I_L\quad\textrm{for every $i\ge d-1$}
\end{equation}
, where $e=N-\dim_k T_{\texttt{q}}X$ and $I_L$ defines the projective tangent space $T_{\texttt{q}}X$. Hence, an infinitely generated $S$-module $I/\widetilde{K}_{d-2} (I)$ has a rather simple minimal free $S$-resolution such as:
\[
0 \to\bigoplus_{q=0}^\infty S(-d-e+1-q)^{b_{e-1}}\to\cdots\to\bigoplus_{q=0}^\infty S(-d-1-q)^{b_1}\to\bigoplus_{q=0}^\infty S(-d-q)^{b_0}\to I/\widetilde{K}_{d-2} (I)
\]
, where $\beta^S_{p,q}(I/\widetilde{K}_{d-2} (I))=b_p=\displaystyle{e\choose p+1}$.
\end{Remk}

\subsection{Syzygies of inner projections}\label{syz_inner}

In this subsection, we explain how we compare the graded Betti numbers of $X$ with those of $X_{\texttt{q}}$ and give some general rules for behaviors of Betti tables under inner projections. First, we recall a mapping cone construction as follows (see e.g. \cite{HK1}):
\begin{Prop}[Elimination mapping cone sequence]\label{MC_seq}
Let $S=k[x_1,\ldots,x_N], R=k[x_0,x_1\ldots,x_N]$ be two polynomial rings. Let $M$ be a graded $R$-module which is not necessarily finitely generated. Then, we have a natural long exact sequence:
\begin{equation*}\small
\begin{array}{cccccccc}
\cdots\Tor_{p}^R(M,k)_{p+q}\lra
\Tor_{p-1}^{S}(M,k)_{p-1+q}
\sts{\bar{\mu}}\Tor_{p-1}^{S}(M,k)_{p-1+q+1}\lra\Tor_{p-1}^R(M,k)_{p-1+q+1}\cdots
\end{array}
\end{equation*}
whose connecting homomorphism $\bar{\mu}$ is induced by the multiplicative map $\times x_0$.
\end{Prop}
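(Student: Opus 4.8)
The plan is to build the minimal free $R$-resolution of the residue field $k$ out of the minimal free $S$-resolution, tensor it with $M$, and read off the long exact homology sequence of the resulting mapping cone. Since this construction resolves $k$ (which is finitely generated, indeed cyclic) rather than $M$, the hypothesis that $M$ may fail to be finitely generated causes no difficulty whatsoever: one tensors a \emph{fixed} resolution of $k$ with $M$ and takes homology.

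First I would record that $S=R/(x_0)$ and $k=R/(x_0,\ldots,x_N)=S/(x_1,\ldots,x_N)$. Let $F_\bullet\llra k$ denote the Koszul resolution of $k$ over $S$, and set $G_\bullet:=F_\bullet\otimes_S R$; this is the Koszul complex on $x_1,\ldots,x_N$ over $R$, a complex of free $R$-modules. By the multiplicativity of Koszul complexes, the full Koszul complex on $x_0,\ldots,x_N$ — which is the minimal free $R$-resolution of $k$, call it $K^R_\bullet$ — factors as
\[
K^R_\bullet=\bigl[\,R(-1)\st{\times x_0}R\,\bigr]\otimes_R G_\bullet,
\]
so that $K^R_\bullet$ is precisely the mapping cone of the multiplication map $\times x_0\colon G_\bullet(-1)\llra G_\bullet$.

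Next I would compute $\Tor^R_p(M,k)=H_p(M\otimes_R K^R_\bullet)$ from this resolution. As tensoring commutes with the formation of mapping cones, $M\otimes_R K^R_\bullet$ is the mapping cone of $\times x_0\colon(M\otimes_R G_\bullet)(-1)\llra M\otimes_R G_\bullet$. The crucial identification is
\[
M\otimes_R G_\bullet=M\otimes_R(F_\bullet\otimes_S R)\cong M\otimes_S F_\bullet,
\]
which follows from associativity of tensor products together with restriction of scalars from $R$ to $S$; consequently $H_p(M\otimes_R G_\bullet)=\Tor^S_p(M,k)$. Hence $M\otimes_R K^R_\bullet$ is the mapping cone of $\times x_0\colon(M\otimes_S F_\bullet)(-1)\to M\otimes_S F_\bullet$.

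Finally I would take the long exact homology sequence associated to the short exact sequence of complexes $0\to M\otimes_S F_\bullet\to M\otimes_R K^R_\bullet\to (M\otimes_S F_\bullet)(-1)[1]\to 0$ coming from this cone. Substituting $\Tor^S_\bullet(M,k)$ and $\Tor^R_\bullet(M,k)$ for the homology groups yields exactly the stated sequence, and by the standard mapping-cone formalism its connecting homomorphism is the map induced by the very $\times x_0$ that was coned — this is the asserted $\bar\mu$. I expect the only real point requiring care to be the simultaneous bookkeeping of the homological degree $p$ and the internal degree $q$: multiplication by $x_0$ raises internal degree by one, so the twist $(-1)$ must be propagated through the cone to verify that $\bar\mu$ indeed carries $\Tor^S_{p-1}(M,k)_{(p-1)+q}$ into $\Tor^S_{p-1}(M,k)_{(p-1)+(q+1)}$ and that the two flanking $\Tor^R$-terms sit in the displayed internal degrees. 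There is no deeper obstacle; every homological ingredient is entirely standard.
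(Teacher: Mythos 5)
Your argument is correct and is exactly the standard construction behind this proposition: the paper itself gives no proof, simply citing \cite{HK1}, where the sequence arises in the same way, by realizing the Koszul resolution of $k$ over $R$ as the mapping cone of $\times x_0$ on the Koszul complex in $x_1,\ldots,x_N$, tensoring with $M$, and taking the long exact homology sequence. Your identification $M\otimes_R(F_\bullet\otimes_S R)\cong M\otimes_S F_\bullet$ and your remark that finite generation of $M$ is irrelevant (since the fixed resolution being used is of $k$) are both on point, and the degree bookkeeping you describe does produce the displayed internal degrees.
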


Using elimination mapping cone sequence (EMCS) and Betti number calculations of PEIs, we could put Betti numbers of $X$ and those of $X_q$ together in a diagram and relate them each other (see Figure \ref{howto} below).

\begin{figure}[!hbtp]
\xymatrix{
&&\beta_{p,q}^S(X_{\texttt{q}})\ar[r]\ar@{<-->}[dr]&\beta_{p,q}^S(X)\ar[r]\ar[d]^{\textsf{EMCS}}&\quad\beta_{p,q}^S(X)/\beta_{p,q}^S(X_{\texttt{q}}) \qquad\qquad\\
&&&\beta_{p,q}^R(X)\ar@{<-->}[dl]\ar[d]^{\textsf{EMCS}}&\textrm{(from Betti $\#$ of PEIs)}\ar@{~>}[d]\ar@{~>}[u]\\
&&\beta_{p-1,q}^S(X_{\texttt{q}})\ar[r]&\beta_{p-1,q}^S(X)\ar[r]&\beta_{p-1,q}^S(X)/\beta_{p-1,q}^S(X_{\texttt{q}}) \qquad\qquad\\
}
\caption{How to connect $\beta_\ast(X)$ with $\beta_\star(X_{\texttt{q}})$?}
\label{howto}
\end{figure}

Now, we state some general theorems for syzygies of inner projections, which are a generalization of main results in \cite{HK1}.

\begin{Thm}\label{coro_Np}
Let $X\subset\P^N$ be a subscheme defined by an ideal $I$, $\texttt{q}\in X$ be any smooth point and $s=s(\texttt{q},I)$ be the stabilization number of $K_i(\texttt{q},I)$'s.
\begin{itemize}
\item[(a)] Suppose that $I$ satisfies property $\textbf{N}_{d,p_0}$ as $R$-module for some $d\ge s$ and $p_0\ge 1$. Then, $\beta^S_{p,q}(\widetilde{K}_{s-1}(I))=0$ for any $0\le p<p_0 -1$ and $q>d$.
\item[(b)] Suppose that for some $d\ge s+1$ and $p_0\ge 1$, $\beta^S_{p,q}(\widetilde{K}_{s-1}(I))=0$ for any $0\le p<p_0$ and $q>d$. Then, $I$ satisfies property $\textbf{N}_{d,p_0}$ as $R$-module.
\item[(c)] $\reg_R(I)=\max\{\reg_S(\widetilde{K}_{s-1}(I)),s+1\}$.
\end{itemize}
\end{Thm}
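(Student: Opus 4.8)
The plan is to run all three parts through the elimination mapping cone sequence of Proposition \ref{MC_seq} applied to the $R$-module $M=I$, using the short exact sequence of $S$-modules
\[ 0 \to \widetilde{K}_{s-1}(I) \to I \to I/\widetilde{K}_{s-1}(I) \to 0 \]
to trade the infinitely generated $I$ for the finitely generated $\widetilde{K}_{s-1}(I)$ plus a completely explicit remainder. Since $\texttt{q}$ is smooth, Proposition \ref{prop_stab}(b) gives $K_i(I)=I_L=(\ell_1,\dots,\ell_e)$ for all $i\ge s$, so the successive quotients $\widetilde{K}_i(I)/\widetilde{K}_{i-1}(I)\cong K_i(I)(-i)$ making up $I/\widetilde{K}_{s-1}(I)$ are just shifted copies of the linear tangent ideal. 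By the explicit resolution computed in the Remark above this yields $\T^S_{p,q}(I/\widetilde{K}_{s-1}(I))=\binom{e}{p+1}$ for every $q\ge s+1$ and $0$ for $q\le s$ (the Koszul complex of $\ell_1,\dots,\ell_e$ repeated along the powers of $x_0$). This is exactly the ``tangent cone'' contribution that the theorem isolates.

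First I would prove (a). Feeding the hypothesis $\T^R_{p,q}(I)=0$ for $p<p_0$, $q>d$ into the mapping cone sequence
\[ \T^R_{p,q}(I)\to \T^S_{p-1,q}(I)\xrightarrow{\bar\mu}\T^S_{p-1,q+1}(I)\to \T^R_{p-1,q+1}(I) \]
shows that multiplication by $x_0$ induces isomorphisms $\T^S_{p,q}(I)\cong \T^S_{p,q+1}(I)$ for all $p<p_0-1$ and $q>d$. Since $\widetilde{K}_{s-1}(I)$ is a finitely generated $S$-module, $\T^S_{p,q}(\widetilde{K}_{s-1}(I))$ vanishes for $q\gg0$, and the long exact sequence of the displayed short exact sequence then identifies $\T^S_{p,q}(I)$ with $\T^S_{p,q}(I/\widetilde{K}_{s-1}(I))=\binom{e}{p+1}$ for $q\gg0$; combining this stable value with the isomorphism chain forces $\T^S_{p,q}(I)=\binom{e}{p+1}$ for all $q>d$ when $p<p_0-1$. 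Comparing the equidimensional groups $\T^S_{p,q}(I)$ and $\T^S_{p,q}(I/\widetilde{K}_{s-1}(I))$ inside the same long exact sequence, and descending from $q\gg0$, then squeezes $\T^S_{p,q}(\widetilde{K}_{s-1}(I))$ to $0$ in the asserted range. Part (b) runs the same machine in reverse: vanishing of $\T^S_{p,q}(\widetilde{K}_{s-1}(I))$ recovers the Koszul value of $\T^S_{p,q}(I)$ in range, and reading the mapping cone sequence the other way returns the vanishing of $\T^R_{p,q}(I)$, i.e. property $\textbf{N}_{d,p_0}$; the stronger hypothesis $d\ge s+1$ is precisely what keeps $q$ strictly above the tangent-cone contribution in degree $s+1$, which is why no homological degree is lost.

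For (c) I would combine the two implications in their supremum form. The finitely generated module $\widetilde{K}_{s-1}(I)$ contributes to $\T^R_{p,q}(I)$ only up to $q=\reg_S(\widetilde{K}_{s-1}(I))$, while the jump $K_{s-1}(I)\subsetneq K_s(I)=K_\infty(I)=I_L$ produces a generator of $I/\widetilde{K}_{s-1}(I)$ in degree $s+1$ that survives the mapping cone and forces $\reg_R(I)\ge s+1$; together these give $\reg_R(I)=\max\{\reg_S(\widetilde{K}_{s-1}(I)),\,s+1\}$. Here Lemma \ref{trivial_van} and Proposition \ref{approx} are used to guarantee that nothing in degrees $q\le s$ interferes, and to replace $I$ by finite truncations wherever the argument needs a finitely generated model.

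The main obstacle is the bookkeeping in the long exact sequences: one must track the simultaneous shift in homological degree (from $p$ to $p-1$) and internal degree (from $q$ to $q+1$) across both the mapping cone sequence and the short exact sequence, and in particular disentangle the known Koszul contribution $\T^S_{p,q}(I/\widetilde{K}_{s-1}(I))=\binom{e}{p+1}$ from the unknown $\T^S_{p,q}(\widetilde{K}_{s-1}(I))$. Concretely, upgrading the comparison maps $\T^S_{p,q}(I)\to\T^S_{p,q}(I/\widetilde{K}_{s-1}(I))$ from \emph{equidimensional} to \emph{isomorphisms} in the relevant range is the delicate point, and it is exactly here that the boundary degrees $q=d$ versus $q=s$ enter, explaining the asymmetry between the hypotheses $d\ge s$ in (a) and $d\ge s+1$ in (b) and the loss of one in $p_0$. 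Carrying the descending induction anchored at $q\gg0$ cleanly through these boundary degrees is where the real work lies.
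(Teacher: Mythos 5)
Your overall architecture --- the elimination mapping cone sequence for $I$, the short exact sequence $0\to\widetilde{K}_{s-1}(I)\to I\to I/\widetilde{K}_{s-1}(I)\to 0$, the Koszul computation $\dim\T^S_{p,q}(I/\widetilde{K}_{s-1}(I))={e\choose p+1}$ for $q\ge s+1$, and a descending induction anchored at $q\gg 0$ --- is the right one: it is essentially the method of \cite{HK1}, to which the paper simply defers for (a) and (b) (the paper only writes out (c), and your argument for (c) matches its: bound $\reg_S(\widetilde{K}_{s-1}(I))\le\reg_R(I)$ by (a), get $s+1\le\reg_R(I)$ from Proposition \ref{prop_stab}(b), and run (b) backwards). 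However, the step you yourself flag as ``the delicate point'' is a genuine gap, not bookkeeping, and the dimension count you propose cannot close it. In the long exact sequence
\[
\T^S_{p+1,q-1}(I)\st{\alpha}\T^S_{p+1,q-1}(I/\widetilde{K}_{s-1}(I))\st{\delta}\T^S_{p,q}(\widetilde{K}_{s-1}(I))\st{\beta}\T^S_{p,q}(I)\st{\gamma}\T^S_{p,q}(I/\widetilde{K}_{s-1}(I))
\]
equidimensionality gives $\dim\ker\gamma=\dim\coker\gamma$, and $\coker\gamma$ injects into $\T^S_{p-1,q+1}(\widetilde{K}_{s-1}(I))$, which your descending induction does kill; but the remaining piece is exactly $\T^S_{p,q}(\widetilde{K}_{s-1}(I))\cong\coker\,\alpha$, and the only control exactness gives on $\coker\,\alpha$ is that $\delta$ embeds it into $\T^S_{p,q}(\widetilde{K}_{s-1}(I))$ --- a tautology. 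Since the induction runs downward in $q$ while $\alpha$ lives in internal degree $q-1$, no amount of squeezing from above reaches it.

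The missing idea is the commutative ladder of short exact sequences with vertical maps induced by $\times x_0$, as in the diagram opening the proof of Theorem \ref{1st_str_ineq}. Note that $x_0$ does \emph{not} act on your short exact sequence of $S$-modules, because $x_0\cdot\widetilde{K}_{s-1}(I)\subset\widetilde{K}_{s}(I)$: multiplication shifts the filtration, so one must compare $0\to\widetilde{K}_{s-1}(I)(-1)\to I(-1)\to (I/\widetilde{K}_{s-1}(I))(-1)\to 0$ with $0\to\widetilde{K}_{s}(I)\to I\to I/\widetilde{K}_{s}(I)\to 0$. It is this ladder, together with (\ref{ses_PEI_2}) and Proposition \ref{approx}, that identifies the relevant connecting map with a map between Tor's of the graded pieces $K_i(I)(-i)$ and yields the needed surjectivity of $\alpha$ (equivalently, injectivity of the $\phi$-type map) \emph{directly} rather than by dimension count --- exactly how the monomorphism $\phi$ is produced in the proof of Theorem \ref{1st_str_ineq} and how \cite{HK1} proves its Corollary 3.4. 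Without it, both your ``squeeze'' in (a) and the reverse reading in (b) (which needs $\bar\mu$ to act as an isomorphism on the Koszul part of $\T^S_{p,q}(I)$ for $q>d\ge s+1$) are unsupported.
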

\begin{proof}
(a) is a natural generalization of corollary 3.4 in \cite{HK1} and (b) can be also obtained by similar arguments (see \cite{Han} for details). For (c), let $\textsf{M}$ be the $\max\{\reg_S(\widetilde{K}_{s-1}(I)),s+1\}$. First, we see that $s\le \reg_R(I)-1$ by Proposition \ref{prop_stab} (b). Since $I$ satisfies $\textbf{N}_{\reg_R(I),\infty}$, we also see that $\reg_S(\widetilde{K}_{s-1}(I))\le\reg_R(I)$ by (a). Thus, $\textsf{M}\le\reg_R(I)$. Conversely, the fact that $\beta^S_{p,q}(\widetilde{K}_{s-1}(I))=0$ for any $p\ge0$ and $q>\textsf{M}$ implies that $I$ satisfies $\textbf{N}_{\textsf{M},\infty}$ by (b) so that $\textsf{M}\ge\reg_R(I)$.
\end{proof}

\begin{Thm}[Depth of inner projection]\label{depth_inn}
Let $X\subset\P^N$ be a nondegenerate subscheme defined by the saturated ideal $I_X$ and $\texttt{q}$ be a closed point of $X$. Suppose that the stabilization number $s(\texttt{q},I_X)=1$. Then,
\begin{equation}
\depth_R (X)=\depth_S (X_q).
\end{equation}
\end{Thm}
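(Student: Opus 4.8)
The plan is to compress the whole comparison into a single short exact sequence of $R$-modules that isolates the contribution of the projection centre, and then to extract both depths from the associated long exact sequence in local cohomology. Throughout I write $I=I_X$, $K_i=K_i(\texttt{q},I)$, and I will use the hypothesis $s(\texttt{q},I)=1$ in the form $K_1=K_2=\cdots=K_\infty$.

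First I would analyse $R_X=R/I$ as a filtered $S$-module. Filter $R/I$ by the $x_0$-degree, letting $F_j$ be the image in $R/I$ of $\bigoplus_{a=0}^{j}S\cdot x_0^{a}$. Then $F_0\cong S/K_0$ (the kernel of $S\to R/I$ is exactly $S\cap I=K_0$), while the leading-coefficient map $f\mapsto\bar f$ defining $K_j$ gives $F_j/F_{j-1}\cong (S/K_j)(-j)$ for $j\ge 1$. Since $s=1$ forces $K_j=K_1$ for every $j\ge 1$, the quotient $(R/I)/F_0$ is generated over $R$ by the single class $\eta=\overline{x_0}$: indeed for $k\in K_1$ one has $x_0k\in I+S$, so $K_1\eta=0$, and a Hilbert-function comparison shows the annihilator of $\eta$ is exactly $K_1R$. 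This produces the key sequence
\[0\longrightarrow S/K_0\longrightarrow R/I\longrightarrow (R/K_1R)(-1)\longrightarrow 0.\qquad(\star)\]
I regard establishing $(\star)$ as the central structural step; I have checked it on the smooth conic $X=\{x_1^2-x_0x_2=0\}$ projected from $\texttt{q}=(1:0:0)\in X$, where $K_0=(0)$, $K_1=(x_2)$, and $(\star)$ becomes $0\to k[x_1,x_2]\to R/(x_1^2-x_0x_2)\to k[x_0,x_1](-1)\to 0$, with matching Hilbert series.

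Next I would compute the depths of the two outer terms of $(\star)$ over $R$. Because $x_0$ annihilates $S/K_0$, an element of $\mathfrak{m}_R$ acts on $S/K_0$ through its image in $S$, so $R$-regular sequences correspond to $S$-regular ones and $\depth_R(S/K_0)=\depth_S(S/K_0)=\depth_S(X_{\texttt{q}})$. On the other hand $R/K_1R=(S/K_1)\otimes_S R$ is the polynomial extension of $S/K_1$ by the single nonzerodivisor $x_0$, whence $\depth_R(R/K_1R)=\depth_S(S/K_1)+1$. Writing $\delta=\depth_S(S/K_0)$ and $\delta'=\depth_S(S/K_1)$, the local-cohomology sequence attached to $(\star)$ has outer terms whose lowest nonvanishing cohomologies sit in degrees $\delta$ and $\delta'+1$; tracking the lowest degree that survives shows $\depth_R(R/I)=\delta$ as soon as $\delta\le\delta'+1$. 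Conversely, were $\delta>\delta'+1$, the connecting homomorphisms would push the first surviving cohomology of $R/I$ down to degree $\delta'+1$, so this bound is also necessary.

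Hence the main obstacle is the estimate $\depth_S(S/K_1)\ge\depth_S(S/K_0)-1$, and this is exactly where the geometry of the centre must enter. By Proposition \ref{prop_stab}(b) one has $K_\infty=I_{TC_{\texttt{q}}X}$; when $\texttt{q}$ is a smooth point $K_1=K_\infty$ is generated by the $e$ independent linear forms cutting out the tangent space, so $S/K_1$ is a polynomial ring of dimension $n$, hence Cohen--Macaulay with $\depth_S(S/K_1)=n$. Since inner projection preserves dimension, $\dim X_{\texttt{q}}=n$ and $\delta\le\dim(S/K_0)=n+1=\delta'+1$, so the inequality holds automatically and $(\star)$ delivers $\depth_R(X)=\depth_S(X_{\texttt{q}})$. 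The delicate part, which I expect to be the crux of the general statement, is to bound $\depth_S(S/I_{TC_{\texttt{q}}X})$ at a point where the tangent cone need not be a linear space; there one must use the finer consequences of $s(\texttt{q})=1$ (or, failing that, the Cohen--Macaulayness of the tangent cone) to prevent the depth of $S/K_1$ from falling below $\delta-1$.
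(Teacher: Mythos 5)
The paper's own ``proof'' is a citation: it runs the argument of theorem 4.1 in \cite{HK1}, which filters the \emph{ideal} $I$ by the $\widetilde{K}_i(I)$'s, uses the sequence $0\to \widetilde{K}_0(I)\to\widetilde{K}_1(I)\to K_1(I)(-1)\to 0$ to compare $\pd_S$ of $I_{X_{\texttt{q}}}=\widetilde{K}_0(I)$ with $\pd_S$ of $I$, crosses from $S$ to $R$ with the elimination mapping cone (Proposition \ref{MC_seq}), and finishes with Auslander--Buchsbaum. Your route --- filtering $R_X$ itself and reading off depths from local cohomology --- is genuinely different, but its central step fails. The subspace $F_0=(S+I)/I$ is an $S$-submodule of $R/I$ but \emph{not} an $R$-submodule: already $x_0\cdot 1=x_0\notin S+I$ because nondegeneracy means $I$ has no linear forms (and $x_0S\subset S+I$ would force $1\in K_1(I)$, impossible for $\texttt{q}\in X$). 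So $(\star)$ is not an exact sequence of $R$-modules, $(R/I)/F_0$ carries no $R$-module structure, and the assertions that $x_0$ annihilates the subobject, that $\operatorname{Ann}_R(\eta)=K_1R$, and that the cokernel is $(R/K_1R)(-1)$ are not meaningful as stated. Your own test case refutes the identification even over $S$: for the conic $x_1^2-x_0x_2$ one has $x_2\cdot\overline{x_0^2}=\overline{x_0x_1^2}\neq 0$ in $(R/I)/F_0$, whereas $x_2$ acts as zero on $(R/K_1R)(-1)=k[x_0,x_1](-1)$; the filtration of $(R/I)/F_0$ by the $F_j/F_0$ has the right graded pieces $(S/K_1)(-j)$ but does not split, and the Hilbert-series check only sees dimensions.

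Two further gaps would remain even after repairing $(\star)$ as a sequence of $S$-modules. First, the depths you would then control are with respect to the irrelevant ideal of $S$, while the theorem compares $\depth_{R}(R_X)$ (with respect to $(x_0,\dots,x_N)$) with $\depth_S(S_{X_{\texttt{q}}})$; since $R_X$ is not finitely generated over $S$ for an inner projection, the two local cohomologies of the middle term do not coincide, and some device such as Proposition \ref{MC_seq} is needed to move between the rings --- this is precisely the step your sketch omits. Second, you complete the estimate $\depth_S(S/K_1)\ge\depth_S(S/K_0)-1$ only when $\texttt{q}$ is smooth, so that $S/K_1$ is a polynomial ring; the theorem assumes only that $\texttt{q}$ is a closed point with $s(\texttt{q})=1$, and then $K_1=K_\infty=I_{TC_{\texttt{q}}X}$ by Proposition \ref{prop_stab}(b) need not define a Cohen--Macaulay scheme, so the ``delicate part'' you defer is squarely inside the scope of the statement. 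The intended argument avoids all three issues because the modules $\widetilde{K}_i(I)$ are honest $S$-submodules of $I$, $K_1(I)(-1)$ appears as a genuine quotient rather than a conjectural direct summand, and the passage from $\Tor^S$ to $\Tor^R$ is handled by the mapping cone before Auslander--Buchsbaum is invoked.
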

\begin{proof}
Almost same as the proof of theorem 4.1 in \cite{HK1}.
\end{proof}

\begin{Remk}[A condition for $s(\texttt{q})=1$]\label{suff_s_1} As we have seen in Theorem \ref{coro_Np} and \ref{depth_inn}, one of the most favorable cases is $s(\texttt{q})=1$ (in this case, $\widetilde{K}_{s-1}(I)$ coincides with a defining ideal of the image scheme $X_{\texttt{q}}$). First of all, $s(\texttt{q})=1$ if $I$ is \textit{quadratic} by Proposition \ref{prop_stab}.

Let us consider a little more refined condition. Then, we know in general
\begin{equation}\label{t<=e}
t:=\dim_k [K_1(I)_1] \le \codim_{\texttt{q}}(X,\P^N)=:e
\end{equation}
, where $X=V(I)$ is a subscheme of $\P^N$ and $\codim_{\texttt{q}}(X,\P^N)$ denotes the codimension of the component containing $\texttt{q}$ in $\P^N$. We can also give an interpretation on $t$ as
\begin{equation}
N-\dim_k T_{\texttt{q}}(\widetilde{X})\quad\{\textrm{$\widetilde{X}\supset X$ is the scheme defined by the quadrics of $I$}\}.
\end{equation}
If we assume the case of taking $\texttt{q}$ as a \textit{general} (so, smooth) point of $X$, then
\begin{center} 
\textit{the \textit{equality} condition $t=e$ of (\ref{t<=e}) is equivalent to $s(\texttt{q})=1$~,}
\end{center}
because $K_1(I)=I_{T_\texttt{q} X}=I_{TC_\texttt{q} X}$ in both assumptions, so it is by Proposition \ref{prop_stab} (b).
\end{Remk}

\begin{Remk}[Reduction via inner projections]\label{red_inner} In general, this inner projection method sometimes gives us a useful way to reduce many given problems into the situation of some small invariants (such as degree, codimension, etc) in which one might often solve them with the help of many nice properties of \textit{small world} in the same way as hyperplane section method did in classical algebraic geometry. In case of taking a hyperplane section the geometry goes into a relatively easier/well-known situation, while the complexity of defining equations/syzygies is almost the same. But, in case of taking an inner projection, the syzygies seem to go into a much simpler stage as compensating for a big payment of the complexity of the geometry. Furthermore, in contrast with outer projection, note that the reduction via inner projection also preserves \textit{$\Delta$-genus} in the sense of Fujita as same as the hyperplane reduction does. See \cite{P2} for a typical example using both hyperplane 
section and inner projection reductions in a clever way.

\begin{figure}[!hbt]
\begin{tabular}[t]{|c|c|}
\hline
Inner projection reduction & Hyperplane section reduction\\
\hline\hline
\textrm{For a general closed point $\texttt{q}\in X$,} &
\textrm{For a general hyperplane $H\subset\P^N$,}\\
\xymatrix{
X\subset\P^N\ar@{-->}[d]^{\pi_{\tt{q}}}\\
X_{\tt{q}}:=\overline{\pi_q(X\setminus \{q\})}\subset \P^{N-1}\\
} &\xymatrix{
X\subset\P^N\ar@{~>}[d]\\
X_{H}:=X\cap H\subset\P^{N-1}
}\\
\hline
$\codim(X_{\tt{q}})=\codim(X)-1$ & $\codim(X_{H})=\codim(X)$ \\
(or $\dim(X_{\tt{q}})=\dim(X)$) &(or $\dim(X_{H})=\dim(X)-1$) \\
\hline
$\deg(X_{\tt{q}})=\deg(X)-1$ &$\deg(X_{H})=\deg(X)$\\
\hline
$\Delta_{X_{\tt{q}}}=\Delta_X$ &$\Delta_{X_{H}}=\Delta_X$ \\
\hline

\end{tabular}
\caption{Two different reduction methods in projective algebraic geometry. Inner projection reduction versus Hyperplane section reduction. Here, $\Delta(X):=\deg(X)-\codim(X)-1$ for an embedded variety $X\subset\P^N$ (originally, due to Fujita for a polarized pair $(X,\mathcal{L})$).}
\label{fig_versus}
\end{figure}
\end{Remk}

\section{Proofs of main results}\label{prf_mains}

\subsection{Proof of inequality (\ref{ubd_main})}\label{proof_ineq}

In this subsection, we prove the inequality (\ref{ubd_main}) in Theorem \ref{fund_ineq}. In fact, we give a proof of the more strengthened form of the theorem as follows:

\begin{Thm}[On the first linear strand]\label{1st_str_ineq}
Let $X\subset \mathbb P^{N}$ be a nondegenerate subscheme, $I_X$ be the defining ideal of $X$, $\texttt{q}\in X$ be a closed point and $K_i(I_X)$'s be the PEIs of $I_X$ with respect to {\texttt{q}}. Set $t=\dim_k (K_1(I_X))_1$ and $e=\codim_{\texttt{q}}(X,\P^N)$ the codimension
of the component containing $\texttt{q}$ in $\P^N$. Then,
\begin{itemize}
\item[(a)] For any $p\ge 1$, the following holds
\begin{align}\label{ubd}
\beta_{p,1}(X)&\le\beta_{p,1}(X_{\texttt{q}})+\beta_{p-1,1}(X_{\texttt{q}})+{t\choose p}\le\beta_{p,1}(X_{\texttt{q}})+\beta_{p-1,1}(X_{\texttt{q}})+{e\choose p}\\
\label{lbd}
\beta_{p,1}(X)&\ge\beta_{p,1}(X_{\texttt{q}})+\beta_{p-1,1}(X_{\texttt{q}})+{t\choose p}-\beta_{p-1,2}(X_{\texttt{q}})-\beta^S_{p-1,2}(K_1(I))~.
\end{align}
When $p=1$, in particular, we have
\begin{equation}\label{eq_num_quad}
\beta_{1,1}(X)=\beta_{1,1}(X_{\texttt{q}})+{t \choose 1}\le\beta_{1,1}(X_{\texttt{q}})+{e \choose 1}~.
\end{equation}
\item[(b)] Furthermore, if $a=a(X)\ge1$, then
\begin{equation}\label{Np_eq}
\beta_{p,1}(X)~=~\beta_{p,1}(X_{\texttt{q}})+\beta_{p-1,1}(X_{\texttt{q}})+{e\choose p}\quad\textrm{for any}~p\le a
\end{equation}
holds and for the case of $p=a+1$ it holds that
\begin{equation}\label{Np_eq_2}
\beta_{a+1,1}(X)~=~\beta_{a+1,1}(X_{\texttt{q}})+\beta_{a,1}(X_{\texttt{q}})-\beta_{a,2}(X_{\texttt{q}})+{e\choose a+1}~.
\end{equation}
\end{itemize}
\end{Thm}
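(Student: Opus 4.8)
The plan is to extract both the upper bound \eqref{ubd} and the lower bound \eqref{lbd} from a single mapping-cone computation, and then to tighten it to the equalities of part (b) by invoking the linearity hypothesis. First I would pass from $R_X$ to the ideal via the shift formula, writing $\beta_{p,1}(X)=\beta^R_{p-1,2}(I_X)=\dim\Tor^R_{p-1}(I_X,k)_{p+1}$, and then apply the elimination mapping cone sequence (Proposition \ref{MC_seq}) to $M=I_X$ in the fixed internal degree $p+1$. Read in that fixed degree, the two $S$-valued terms flanking $\Tor^R_{p-1}(I_X,k)_{p+1}$ are joined by $\times x_0$, so the relevant window of the long exact sequence displays $\beta_{p,1}(X)$ as the sum of the cokernel of $\times x_0\colon\Tor^S_{p-1}(I_X)_{p}\to\Tor^S_{p-1}(I_X)_{p+1}$ and the kernel of $\times x_0\colon\Tor^S_{p-2}(I_X)_{p}\to\Tor^S_{p-2}(I_X)_{p+1}$. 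Because $X$ is nondegenerate, $K_0(I_X)=I_{X_{\texttt q}}$ has no linear forms, whence $\Tor^S_{p-1}(I_X)_p=\beta^S_{p-1,1}(I_X)=0$ by Proposition \ref{approx}, and the first contribution collapses to $\beta^S_{p-1,2}(I_X)$.

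Next I would replace the infinitely generated $S$-module $I_X$ by a finite truncation: Proposition \ref{approx} shows that in internal degree $p+1$ only $\widetilde K_1(I_X)$ is seen, so $\beta^S_{p-1,2}(I_X)=\beta^S_{p-1,2}(\widetilde K_1(I_X))$, and likewise the lower pieces feeding the kernel term reduce to $\widetilde K_1(I_X)$. I would then feed the short exact sequence \eqref{ses_PEI} with $i=1,\ h=-1$, namely $0\to K_0(I_X)\to\widetilde K_1(I_X)\to K_1(I_X)(-1)\to 0$, into the long exact $\Tor^S$-sequence. Here $K_0(I_X)$ is the defining ideal of $X_{\texttt q}$, so after the shift its Betti numbers are exactly the $\beta_{\ast,1}(X_{\texttt q})$, while the degree-one part $(K_1(I_X))_1$ is the $t$-dimensional space of independent linear forms cutting out (part of) the tangent space; its linear strand is the Koszul complex on those forms, giving $\beta^S_{j,1}(K_1(I_X))=\binom{t}{j+1}$ and hence the summand $\binom{t}{p}$.

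Assembling these ingredients, the cokernel contributes $\beta_{p,1}(X_{\texttt q})+\binom{t}{p}$ up to the image of the connecting map $\binom{t}{p}\to\Tor^S_{p-2}(K_0)_{p+1}=\beta_{p-1,1}(X_{\texttt q})$, and the kernel term accounts for the $\beta_{p-1,1}(X_{\texttt q})$ summand. Discarding the connecting corrections yields the upper bound \eqref{ubd}; retaining them, bounded by $\beta_{p-1,2}(X_{\texttt q})+\beta^S_{p-1,2}(K_1(I_X))$, yields the lower bound \eqref{lbd}; and the estimate $t\le e$ from \eqref{t<=e} gives the second inequality in \eqref{ubd}. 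When $p=1$ every connecting and kernel term vanishes for degree reasons (the neighbouring $\Tor_{-1}$ is zero), which is exactly why \eqref{eq_num_quad} is a genuine equality. For part (b), the hypothesis $a=a(X)\ge 1$ means precisely that $I_X$ satisfies $\textbf{N}_{2,a}$, i.e.\ $\beta_{p,q}(X)=0$ for $q\ge 2$ and $p\le a$; transported through Theorem \ref{coro_Np} this kills the very $\Tor^S$-groups that carry the connecting corrections throughout the range $p\le a$, upgrading the inequalities to the equalities \eqref{Np_eq}, while at $p=a+1$ the first surviving correction is exactly $\beta_{a,2}(X_{\texttt q})$, producing \eqref{Np_eq_2}.

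The main obstacle will be the precise control of the two connecting homomorphisms, since the entire gap between the upper and lower bounds is measured by their ranks: I must show that the kernel of $\times x_0$ on $\Tor^S_{p-2}(I_X)_p$ meets only the $K_0$-part and not the Koszul part coming from $K_1(I_X)$ (otherwise the kernel term would exceed $\beta_{p-1,1}(X_{\texttt q})$ and the upper bound would fail), and symmetrically that the connecting map out of $\binom{t}{p}$ lands in $\beta_{p-1,1}(X_{\texttt q})$ with the claimed defect. The natural device for both is the companion sequence \eqref{ses_PEI_2}, which records the $\times x_0$-action on the graded pieces $\widetilde K_i/\widetilde K_{i-1}$ and pins down that the linear-form strand injects under multiplication by $x_0$; verifying that these maps die under the $\textbf{N}_{2,a}$ hypothesis in part (b) is where the bookkeeping is most delicate.
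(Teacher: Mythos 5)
Your plan reproduces the paper's argument: the same elimination mapping cone window for $I_X$ in internal degree $p+1$, the same truncation to $\widetilde K_1(I_X)$ via Proposition \ref{approx}, the short exact sequence $0\to K_0(I_X)\to\widetilde K_1(I_X)\to K_1(I_X)(-1)\to 0$ with the Koszul count $\binom{t}{p}$, and the same key injectivity (the paper's map $\phi$, extracted from (\ref{ses_PEI_2}) using that $K_2(I_X)/K_1(I_X)$ contains no units because $\texttt q\in X$) to confine $\ker(\times x_0)$ to the $K_0$-part, with part (b) likewise obtained by killing the correction terms under $\textbf{N}_{2,a}$. The one caution is that $\Tor^S_{p-2}(I_X)_{p+1}$ is governed by $\widetilde K_2(I_X)$, not $\widetilde K_1(I_X)$ (Proposition \ref{approx} needs $d\ge 2$ there), which is precisely why the paper routes the kernel estimate through the injective $\phi$ rather than a direct truncation — but you have correctly identified this as the delicate point and named the right device, so the proposal matches the paper's proof in all essentials.
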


\begin{proof}
We prove the theorem by treating $\beta^R_{p,q}(I_X)$ instead of $\beta^R_{p,q}(R_X)$ (also for $\beta^S_{p,q}(S_{X_\texttt{q}})$). Because $\beta^R_{p,q}(R_X)=\beta^R_{p-1,q+1}(I_X)$ for all $p>0$ and any $q\in\Z$, keep in mind that from now on,
\begin{center}
 \textit{every $p$ in the proof is by one less than the $p$ in the statement.}\\
\end{center}

For simplicity, let $I$ be the defining ideal $I_X$ and $J$ be the ideal $K_0(I_X)=\widetilde{K}_0(I_X)$ defining $X_\texttt{q}$ scheme-theoretically. Being a nondegenerate subscheme, $I$ has no linear forms. Consider the commutative diagram such as:
\begin{equation}
\begin{gathered}{
\xymatrix @R=1.5pc @C=1.5pc{
0\ar[r]&J(-1)\ar[r]\ar[d]&I(-1)\ar[r]\ar[d]&I/J(-1)\ar[r]\ar[d]&0\\
0\ar[r]&\widetilde{K}_1(I)\ar[r]&I\ar[r]&I/\widetilde{K}_1(I)\ar[r]&0~,\\
}
}
\end{gathered}
\end{equation}
where the vertical maps are induced by $x_0$-multiplications.\\

Then, from the above diagram and elimination mapping cone sequence (Proposition \ref{MC_seq}), we have an induced commutative diagram as follows:

\begin{equation*}\small
\xymatrix @R=1.5pc @C=1pc{
&&&&&\stackrel{\vdots}{\textstyle\T^R_{p,2}(I)}\ar[d]&\\
&&0\ar[d]^-{}_{}&0\ar[r]&\T^S_{p-1,2} (J)\ar[r]^-{}&\T^S_{p-1,2}(I)\ar[r]^-{\upsilon}\ar[d]^{\mu}_{\times x_0}&\T^S_{p-1,2}(I/J)\ar[d]^{\phi}\\
0\ar[r]&\T^S_{p,2}(\widetilde{K}_1(I))\ar[r]&\T^S_{p,2}(I)\ar[d]\ar[r]^-{}&0\ar[r]&\T^S_{p-1,3} (\widetilde{K}_1(I))\ar[r]&\T^S_{p-1,3}(I)\ar[d]\ar[r]^-{\nu}&\T^S_{p-1,3}(I/\widetilde{K}_1(I))\\
&&\stackrel{\textstyle\T^R_{p,2}(I)}{\vdots}& &&\T^R_{p-1,3}(I)&,\\
} 
\end{equation*}
because $\T^S_{p,2}(I/\widetilde{K}_1(I))=0$ by Lemma \ref{trivial_van} and we assumed that $I$ has no linear forms.\\

By (\ref{ses_PEI_2}) and Proposition \ref{approx} we could identify $\phi$ with $\widetilde{\phi}$ in the following
$$\T^S_{p,2}\bigg(\frac{K_2(I)}{K_1(I)}(-2)\bigg)\to\T^S_{p-1,2}\bigg(\frac{\widetilde{K}_1(I)}{J}\bigg)\st{\widetilde{\phi}}\T^S_{p-1,3}\bigg(\frac{\widetilde{K}_2(I)}{\widetilde{K}_1(I)}\bigg)\to\T^S_{p-1,3}\bigg(\frac{K_2(I)}{K_1(I)}(-2)\bigg)~.$$
Since $\texttt{q}\in X$ so that $K_i(I)$ contains no units, we have $\T^S_{p,2}\big(\frac{K_2(I)}{K_1(I)}(-2)\big)=0$ so that $\widetilde{\phi}$ (therefore $\phi$ also) is a monomorphism. This implies that $\ker \phi\circ\upsilon =\ker~\upsilon$.\\

For (a), let us compare dimensions of kernels of morphisms in the commuting diagram above. For $\ker~\mu$ is a subspace of $\ker \nu\circ\mu = \ker~\phi\circ\upsilon$, we have
\begin{equation*}
\beta^R_{p,2}(I)-\beta^S_{p,2}(\widetilde{K}_1 (I))~=~\dim\ker~\mu ~\le~\dim\ker~\phi\circ\upsilon~=~\dim\ker~\upsilon~=~\beta_{p-1,2}^S(J)\nonumber
\end{equation*}
so that
\begin{align*}
\beta^R_{p,2}(I)\le&~\beta_{p-1,2}^S(J)+\beta^S_{p,2}(\widetilde{K}_1 (I))\nonumber\\
\le&~\beta_{p-1,2}^S(J)+\beta^S_{p,2}(J)+\beta^S_{p,2}(K_1(I)(-1))\nonumber~,
\end{align*}
because of a short exact sequence from (\ref{ses_PEI})
\begin{equation}\label{ses_PEI_3}
0\to J\to \widetilde{K}_1 (I)\to K_1 (I)(-1)\to 0~.
\end{equation}
Further, since the $k$-vector space $K_1(I)_1$ consists of $t$ independent linear forms, we can compute via a linear Koszul resolution
$$\beta^S_{p,2}(K_1(I)(-1))=\beta^S_{p,1}(K_1(I))={t\choose p+1}\le {e\choose p+1}$$ 
(note that always $t\le e$ ; see Remark \ref{suff_s_1}) and obtain the inequality (\ref{ubd}). 

The inequality (\ref{lbd}) comes from the following
\begin{align}\label{ker_est}
\dim\ker~\upsilon=&~\dim\ker~\nu\circ\mu ~=~\dim\ker~\mu +\dim(\im~\mu\cap\ker~\nu)\nonumber\\
\le&~\dim\ker~\mu +\dim \ker~\nu~
\end{align}
so that
\begin{equation}
\beta_{p-1,2}^S(J)\le\beta^R_{p,2}(I)-\beta^S_{p,2}(\widetilde{K}_1(I))+\beta^S_{p-1,3} (\widetilde{K}_1(I))\nonumber~
\end{equation}
or
\begin{equation}
\beta_{p-1,2}^S(J)+\beta^S_{p,2}(\widetilde{K}_1(I))-\beta^S_{p-1,3} (\widetilde{K}_1(I))\le\beta^R_{p,2}(I)\nonumber~.
\end{equation}
Once again, using the long exact sequence from (\ref{ses_PEI_3}), we also have the desired inequality
\begin{equation*}
\beta^S_{p,2}(J)+\beta^S_{p-1,2}(J)-\beta^S_{p-1,3}(J)+{t\choose p+1}-\beta^S_{p-1,2}(K_1(I))\le\beta^R_{p,2}(I)~.
\end{equation*}

When $p=0$, both inequalities (\ref{ubd}) and (\ref{lbd}) coincide and lead to the formula (\ref{eq_num_quad}).

For (b), above all, note that $a=a(X)\ge1$ means $I$ is quadratic and has property $\textbf{N}_{2,a}$. To prove the first part (\ref{Np_eq}), it is enough to show that $t=e$, $\beta_{p-1,2}^S(K_1(I))=0$, and $\beta^S_{p-1,3}(J)=0$ for any $p\le a-1$. Now that $I$ is quadratic, $s(\texttt{q})=1$. Thus, $t=e$ (see Remark \ref{suff_s_1}) so that $\beta^S_{p-1,2}(K_1(I))=0$. Moreover, by Fact \ref{inner_a(x)} we know that $X_{\texttt{q}}$ has at least property $\textbf{N}_{2,a-1}$ i.e. $\beta^S_{p-1,3}(J)=0$ for any $p\le a-1$. So, the equality (\ref{Np_eq}) is immediate from both (\ref{ubd}) and (\ref{lbd}). Furthermore, since $T^R_{a-1,3}(I)=0$ by property $\textbf{N}_{2,a}$, $\mu$ becomes surjective in case of $p=a$ and in this case the inequality of (\ref{ker_est}) becomes equal so that this gives the equality (\ref{Np_eq_2}).
\end{proof}

\begin{Remk}[Case of non-saturated ideals]\label{for_gen_ideal}
Note that Theorem \ref{1st_str_ineq} can be easily generalized for any scheme-theoretic defining ideal (not necessarily saturated) $I$ of $X$. Besides this theorem, most of results in this paper could drop the saturatedness.
\end{Remk}

As a test case, we give a following lemma using Theorem \ref{1st_str_ineq} (this was introduced as a part of so-called \textit{$K_{p,1}$-theorem} by Green for complex projective manifolds in \cite{G2} and also by \cite{NP} for a bit more general case).

\begin{Lem}[Generalized $K_{p,1}$-theorem (a)]\label{Kp,1}
Let $X^n\subset \P^{n+e}$ be any nondegenerate (possibly singular) variety of $\codim~e$. Then, we have
\begin{equation}\label{Kp,1_eq}
\beta_{p,1}(X)=0 \quad \text {for any}~ p > e~.
\end{equation}
\end{Lem}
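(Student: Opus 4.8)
The plan is to induct on the codimension $e$ and to reduce via inner projection, using the fundamental inequality of Theorem \ref{fund_ineq}. The driving observation is that the binomial term ${e\choose p}$ vanishes as soon as $p>e$, so for such $p$ the inequality (\ref{ubd_main}) degenerates into a purely homological comparison between $X$ and its inner projection $X_{\texttt{q}}$, whose codimension has dropped by one. Thus the vanishing for $X$ is inherited directly from the vanishing for $X_{\texttt{q}}$, which is one codimension smaller.

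Concretely, for the base case I would take $e=0$, where $X=\P^n$ and $I_X=0$, so that $\beta_{p,1}(X)=0$ for every $p\ge 1>0$ (the hypersurface case $e=1$ is equally transparent, since the resolution then has length one). For the inductive step, assume the claim for nondegenerate varieties of codimension $e-1$, and let $X$ have codimension $e\ge 1$. I would choose a general (hence smooth) point $\texttt{q}\in X$ and form the inner projection $X_{\texttt{q}}\subset\P^{n+e-1}$, which by Remark \ref{red_inner} is again a nondegenerate variety, of codimension exactly $e-1$. Now fix $p>e$. Then $p>e-1$ and $p-1\ge e>e-1$, so the inductive hypothesis applied to $X_{\texttt{q}}$ gives $\beta_{p,1}(X_{\texttt{q}})=0$ and $\beta_{p-1,1}(X_{\texttt{q}})=0$. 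Since also ${e\choose p}=0$, inequality (\ref{ubd_main}) yields
\[
\beta_{p,1}(X)\le\beta_{p,1}(X_{\texttt{q}})+\beta_{p-1,1}(X_{\texttt{q}})+{e\choose p}=0,
\]
forcing $\beta_{p,1}(X)=0$ and completing the induction.

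The only nonformal step, and the one I expect to require care, is the verification that the inner projection preserves all the hypotheses of the induction: that $X_{\texttt{q}}$ remains irreducible, reduced, and nondegenerate, and that its codimension is exactly $e-1$ rather than dropping further. Irreducibility and reducedness follow because $X_{\texttt{q}}$ is by definition the Zariski closure of the image of the irreducible reduced $X$ under the projection, and nondegeneracy is inherited since the linear projection of a variety spanning $\P^{n+e}$ from one of its own points spans $\P^{n+e-1}$. The codimension statement is precisely the content of Remark \ref{red_inner} for a general center $\texttt{q}$; it is here that the generality (and smoothness) of $\texttt{q}$ is genuinely used, and it is the reason the argument is naturally phrased for \emph{varieties} rather than for arbitrary schemes.
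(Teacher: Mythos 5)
Your proof is correct and follows essentially the same route as the paper: induction on the codimension $e$, inner projection from a general point, and the observation that for $p>e$ the inequality of Theorem \ref{fund_ineq} reduces to the vanishing already known for $X_{\texttt{q}}$, whose codimension is $e-1$. The paper's version is just slightly terser (it starts the induction at the hypersurface case $e=1$ and leaves the preservation of nondegeneracy and codimension under general inner projection implicit, as recorded in Remark \ref{red_inner}).
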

\begin{proof}
Use induction on $e$. When $e=1$ (i.e. hypersurface), it is obvious. Suppose that (\ref{Kp,1_eq}) holds if $e\le m$ for some $m\ge1$. If $\codim(X,\P^{n+e})=m+1$, then take an inner projection of $X$ from any general point $\texttt{q}$ of $X$. By Theorem \ref{1st_str_ineq} (a), for any $p>m+1$ we have 
\[\beta_{p,1}(X)\le\beta_{p-1}(X_{\texttt{q}})+\beta_{p,1}(X_{\texttt{q}})+{m+1\choose p}=0\quad\textrm{($\because~p>\codim(X_{\texttt{q}},\P^{n+e-1})$)}\]
so that $\beta_{p,1}(X)=0$ and the proof is done. 
\end{proof}

\subsection{Proofs of Theorem \ref{sharp_ubd} and \ref{char_VMD}}\label{extr_case}

Now, we are ready to prove Theorem \ref{sharp_ubd} and other results. 

\begin{proof}[Proof of Theorem \ref{sharp_ubd}] We use induction on the homological index $p$. Set $X:=X^{(e)}$ to respect its own codimension and consider iterated inner projections from a general (so, non-singular) point and denote the Zariski closure of the image of $i$-th inner projection by $X^{(e-i)}$. Then, we have a chain of (birational) maps $\{\pi_k\}$ from $X$ to some lower codimensional variety (for example, a hypersurface $X^{(1)}$) and the associated sequence of varieties $\{X^{(e)},X^{(e-1)},\cdots,X^{(2)},X^{(1)}\}$ such as
\begin{align}\label{iter_inner}
X=X^{(e)}\stackrel{\pi}{\dasharrow} X^{(e-1)}\stackrel{\pi}{\dasharrow}\cdots \stackrel{\pi}{\dasharrow}X^{(e-i)}\stackrel{\pi}{\dasharrow}\cdots\stackrel{\pi}{\dasharrow} X^{(2)}\stackrel{\pi}{\dasharrow} X^{(1)}~.
\end{align}

$p=1$ case: Here, we reprove the classical result (known by Castelnuovo and independently by Zak) using our own reduction method via inner projections.  We start by recalling binomial identities (some variants of Vandermonde) which will be used frequently in the remaining part of our paper.
\begin{align}\label{bino_id_a}
\sum^s_{i=0} {r+i\choose i}&=\sum^s_{i=0} {r+i\choose r}={r+s+1\choose r+1}\\
\label{bino_id_c}
\sum^s_{i=0} {r+i\choose r}{s-i\choose t}&={r+s+1\choose r+t+1}\quad\textrm{for integers $s\ge t$}
\end{align} 

By the inequality (\ref{eq_num_quad}) of Theorem \ref{1st_str_ineq} (a), for any $e\ge 1$ we know 
\begin{align}\label{ind_1}\nonumber
\beta_{1,1}(X^{(e)})&\le \beta_{1,1}(X^{(e-1)})+{e\choose 1}\\ \nonumber
&\le\beta_{1,1}(X^{(e-2)})+{e-1\choose 1}+{e\choose 1}\\ 
&\qquad\vdots\\ \nonumber
&\le \beta_{1,1}(X^{(1)})+{2\choose 1}+\cdots+{e-1\choose 1}+{e\choose 1}\\ \nonumber
&\le {e+1\choose 2}\quad\textrm{by binomial identity (\ref{bino_id_a})}
\end{align}
, because $X^{(1)}$ is a hypersurface so that $\beta_{1,1}(X^{(1)})\le 1$.\\

Now, for some $m\ge 1$ suppose the induction hypothesis as follows:
\begin{equation}\label{ind_hyp}
\begin{array}{c}
\textit{``our desired upper bound (\ref{extr_bd}) holds for every nondegenerate variety}\\ 
\textit{of all $p\le m$ and of any codimension $e\ge 1$''}~.
\end{array}
\end{equation}
\bigskip

$p=m+1$ case: Using the inequality  (\ref{ubd}), we have
\begin{align}\label{ind_2}\nonumber
\beta_{m+1,1}(X^{(e)})&\le \beta_{m+1,1}(X^{(e-1)})+\beta_{m,1}(X^{(e-1)})+{e\choose m+1}\\ \nonumber
&\le \beta_{m+1,1}(X^{(e-2)})+\beta_{m,1}(X^{(e-2)})+\beta_{m,1}(X^{(e-1)})+{e-1\choose m+1}+{e\choose m+1}\\ 
&\qquad\vdots\\ \nonumber
&\le \beta_{m+1,1}(X^{(m)})+\sum_{i=m}^{e-1}\beta_{m,1}(X^{(i)})+\sum_{i=m+1}^{e}{i\choose m+1}\\ \nonumber
&\le \beta_{m+1,1}(X^{(m)})+m{e+1\choose m+2}+{e+1\choose m+2}~\textrm{by hypothesis (\ref{ind_hyp}) and (\ref{bino_id_a})}\\ \nonumber
&\le (m+1){e+1\choose m+2}
\end{align}
, because $\beta_{m+1,1}(X^{(m)})\le 0$ by Lemma \ref{Kp,1}. This completes our proof.
\end{proof}

As one of by-products of Theorem \ref{sharp_ubd}, we have the following new characterizations of varieties of minimal degree which generalize Castelnuovo's bound on quadrics to higher linear syzygy level.

\begin{Thm}[Theorem \ref{char_VMD}]
Let $X^n\subset \P^{n+e}$ be a nondegenerate variety with $e\ge 1$. Then, the following are all equivalent:
\begin{itemize}
\item[(a)] $X^n$ is a variety of minimal degree in $\P^{n+e}$.
\item[(b)] $\mathcal I_X$ is 2-regular.
\item[(c)] $a(X)\ge e$.
\item[(d)] $h^0(\P^{n+e}, \mathcal I_X(2))$=${e+1 \choose 2}$.
\item[(e)] one of $\beta_{p,1}(X)\textrm{'s}$ achieves the maximal upper bound (\ref{extr_bd}) for some $1\le p\le e$.
\item[(f)] all $\beta_{p,1}(X)\textrm{'s}$ achieve the maximal upper bound (\ref{extr_bd}).
\end{itemize}
\end{Thm}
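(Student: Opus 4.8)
The plan is to prove the whole chain of equivalences by induction on the codimension $e$, so that at codimension $e$ I may freely invoke the full theorem (together with Theorem~\ref{sharp_ubd}) for every nondegenerate variety of codimension $<e$. The logical skeleton I would use is the cycle $(a)\Rightarrow(c)\Rightarrow(f)\Rightarrow(e)\Rightarrow(a)$, supplemented by $(a)\Leftrightarrow(b)$ and by the observation that $(d)$ is merely the case $p=1$ of $(e)$/$(f)$: indeed $h^0(\P^{n+e},\mathcal I_X(2))=\dim_k(I_X)_2=\beta_{1,1}(X)$, and the value of $(\ref{extr_bd})$ at $p=1$ is exactly $\binom{e+1}{2}$. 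Thus $(f)\Rightarrow(d)\Rightarrow(e)$ comes for free, and the only genuinely new work sits in the two arrows $(c)\Rightarrow(f)$ and $(e)\Rightarrow(a)$; everything else reduces to classical facts about varieties of minimal degree.

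For the classical inputs I would argue as follows. A variety of minimal degree is arithmetically Cohen--Macaulay and its ideal is resolved by a linear (Eagon--Northcott type) complex of length $e$ with $\beta_{p,1}(X)=p\binom{e+1}{p+1}$; this at once yields $(a)\Rightarrow(b)$ (the resolution is $2$-regular), $(a)\Rightarrow(c)$ (the linear strand reaches index $e$, so $a(X)=e$), and $(a)\Rightarrow(f)$ (all maxima are attained). The reverse implications $(b)\Rightarrow(a)$ (a $2$-regular nondegenerate variety has minimal degree) and $(d)\Rightarrow(a)$ are Castelnuovo's characterisation, recalled as statement~(a) in the introduction, while $(f)\Rightarrow(e)$ is trivial. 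It therefore remains to close the cycle through $(c)\Rightarrow(f)$ and $(e)\Rightarrow(a)$.

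For $(c)\Rightarrow(f)$ I would run the inductive step through a single inner projection from a general smooth point $\texttt{q}\in X$, giving a nondegenerate variety $X_{\texttt{q}}$ of codimension $e-1$. Since $a(X)\ge e\ge1$ forces $I_X$ to be quadratic with property $\textbf{N}_{2,e}$, Fact~\ref{inner_a(x)} gives $a(X_{\texttt{q}})\ge a(X)-1\ge e-1=\codim(X_{\texttt{q}})$, so $X_{\texttt{q}}$ satisfies $(c)$ in codimension $e-1$; by the inductive hypothesis it is a variety of minimal degree, whence $\beta_{p,1}(X_{\texttt{q}})=p\binom{e}{p+1}$ for all $p$. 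Feeding this into the equality $(\ref{Np_eq})$ of Theorem~\ref{1st_str_ineq}(b), which is valid for all $p\le a(X)$ and hence for all $p\le e$, the Pascal computation $p\binom{e}{p+1}+(p-1)\binom{e}{p}+\binom{e}{p}=p\binom{e+1}{p+1}$ shows that every $\beta_{p,1}(X)$ attains $(\ref{extr_bd})$ for $p\le e$, the remaining ones vanishing by Lemma~\ref{Kp,1}; this is $(f)$.

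The decisive step is $(e)\Rightarrow(a)$, and this is where I expect the real difficulty. Suppose $\beta_{p_0,1}(X)=p_0\binom{e+1}{p_0+1}$ for some $1\le p_0\le e$ and project from a general point $\texttt{q}$. I would use the \emph{sharp} form $(\ref{ubd})$ of the fundamental inequality, keeping the refined middle term $\binom{t}{p_0}$ with $t=\dim_k(K_1(I_X))_1\le e$, together with the codimension-$(e-1)$ bounds of Theorem~\ref{sharp_ubd} for $X_{\texttt{q}}$. Because the three upper bounds $p_0\binom{e}{p_0+1}$, $(p_0-1)\binom{e}{p_0}$ and $\binom{e}{p_0}$ sum exactly to the maximal value $p_0\binom{e+1}{p_0+1}$, attaining it forces every inequality in $(\ref{ubd})$ to be an equality; in particular $\binom{t}{p_0}=\binom{e}{p_0}$, so $t=e$, and simultaneously $\beta_{p_0,1}(X_{\texttt{q}})$ (respectively $\beta_{p_0-1,1}(X_{\texttt{q}})$ in the boundary case $p_0=e$) attains its own codimension-$(e-1)$ maximum. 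By the inductive hypothesis $X_{\texttt{q}}$ is then a variety of minimal degree, so $\beta_{1,1}(X_{\texttt{q}})=\binom{e}{2}$; combining this with $t=e$ and the exact formula $(\ref{eq_num_quad})$ gives $\beta_{1,1}(X)=\binom{e}{2}+e=\binom{e+1}{2}$, i.e. condition $(d)$, and Castelnuovo's theorem yields $(a)$. The points I would watch most carefully are the extraction of $t=e$ from the finer inequality (the weaker bound with $\binom{e}{p_0}$ used in the proof of Theorem~\ref{sharp_ubd} does not suffice), the boundary case $p_0=e$ where $\beta_{p_0,1}(X_{\texttt{q}})=0$ carries no information and one must instead read off maximality of $\beta_{p_0-1,1}(X_{\texttt{q}})$, and the verification that projecting from a general point keeps $X_{\texttt{q}}$ irreducible, nondegenerate and of codimension $e-1$ so that the inductive hypothesis applies; the base case $e=1$, a nondegenerate quadric hypersurface with $\beta_{1,1}=1$, is immediate.
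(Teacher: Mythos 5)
Your proof is correct, and the essential mechanism is the same as the paper's: equality in the sharp bound forces every intermediate inequality to be an equality, which simultaneously pins down $t=\dim_k(K_1(I_X))_1=e$ (hence $s(\texttt{q})=1$) and forces maximality of the Betti numbers of the projected image, after which everything is pushed down to $\beta_{1,1}$. But the organization is genuinely different. The paper runs the cycle $(f)\Rightarrow(e)\Rightarrow(d)\Rightarrow(b)\Rightarrow(f)$, proves $(e)\Rightarrow(d)$ by induction on the homological index $p$ while squeezing the \emph{iterated} chain of projections in (\ref{ind_2}), and — importantly — does not cite Castelnuovo's equality statement: the step $(d)\Rightarrow(b)$ projects all the way down to a hyperquadric $X^{(1)}$ and lifts $2$-regularity back up via Theorem \ref{coro_Np}(c), so the equality case of Castelnuovo's theorem is reproved rather than assumed. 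You instead induct on the codimension $e$ with a single projection per step, close the cycle through $(e)\Rightarrow(d)\Rightarrow(a)$ by invoking Castelnuovo's classical characterization as a black box, and handle $(c)$ by a direct argument ($(a)\Rightarrow(c)$ from the Eagon--Northcott resolution, $(c)\Rightarrow(f)$ from Fact \ref{inner_a(x)} and the equality (\ref{Np_eq}), with the Pascal identity $p\binom{e}{p+1}+(p-1)\binom{e}{p}+\binom{e}{p}=p\binom{e+1}{p+1}$), whereas the paper gets $(b)\Leftrightarrow(c)$ from the rigidity of property $\textbf{N}_{2,p}$. Your route buys a shorter, single-step induction and an alternative to the rigidity theorem, at the price of taking the equality case of Castelnuovo's bound on faith; the paper's route is self-contained on that point. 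Your flagged boundary cases (extracting $t=e$ from the refined $\binom{t}{p_0}$ term rather than the weaker $\binom{e}{p_0}$, reading off $\beta_{p_0-1,1}(X_{\texttt{q}})$ when $p_0=e$, and the base case $e=1$) are exactly the right places to be careful, and they all check out.
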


\begin{proof}[Proof of Theorem \ref{char_VMD}] First, note that $(a)\Leftrightarrow (b)\Leftrightarrow (c)$. $(a)\Leftrightarrow (b)$ is well-known fact (e.g. see \cite{EH}) and $(b)\Leftrightarrow (c)$ also comes from so-called \textit{rigidity} of property $\textbf{N}_{2,p}$ (see \cite{EGHP1,HK1}). For the remaining part, we take an order such as $(f)\Rightarrow (e)\Rightarrow (d) \Rightarrow (b) \Rightarrow (f)$.

$(f)\Rightarrow (e)$ is trivial. To see $(e)\Rightarrow (d)$, use induction on $p$. For $p=1$, this implication is tautological. Assume that this is true for when $p\le m$ for some $m\ge 1$. If $\beta_{p,1}(X)$ meets its own maximum at $p=m+1$, then for any sequence of \textit{iterated general inner projections} $\{X=X^{(e)},X^{(e-1)},\cdots,X^{(m)}\}$ (see (\ref{iter_inner})), the computations as same as (\ref{ind_2}) force us to have $\beta_{m,1}(X^{(m)})=m$ and
\begin{align*}
\textrm{$\dim_k[K_1(I^{(i)})_1] = \codim(X^{(i)})$ for every $i\ge m+1$}
\end{align*} 
which implies that we have the stabilization $s=1$ at every reduction step from $X$ to $X^{(m)}$ (see Remark \ref{suff_s_1}). Here $I^{(e)}:=I_X$ and $I^{(i)}$ which defines $X^{(i)}$ scheme-theoretically is the elimination ideal of $I^{(i+1)}$.

Then, similarly as in (\ref{ind_1}), using the formula (\ref{eq_num_quad}) we obtain
\begin{align}\nonumber
\beta_{1,1}(X)&= \beta_{1,1}(X^{(e-1)})+{e\choose 1}\\ \nonumber
&=\beta_{1,1}(X^{(e-2)})+{e-1\choose 1}+{e\choose 1}\\ 
&\qquad\vdots\\ \nonumber
&= \beta_{1,1}(X^{(m)})+{m+1\choose 1}+\cdots+{e-1\choose 1}+{e\choose 1}={e+1\choose 2}~,\\ \nonumber
\end{align}
because $\beta_{m,1}(X^{(m)})=m$ implies $\displaystyle\beta_{1,1}(X^{(m)})={m+1\choose 2}$ by induction hypothesis.

To get $(d)\Rightarrow (b)$, take any sequence of iterated general inner projections from $X$ to a hypersurface $X^{(1)}$, $\{X=X^{(e)},X^{(e-1)},\cdots,X^{(1)}\}$. By the same argument we did for $(e)\Rightarrow (d)$, every reduction step from $X$ to a hypersurface $X^{(1)}$ has the stabilization number $s=1$ and $\beta_{1,1}(X^{(1)})=1$ which means $X^{(1)}$ is a hyperquadric (in particular $2$-regular). Now we can lift the regularity of $X^{(1)}$ up to the regularity of $X$ through Theorem \ref{coro_Np} (c). Hence, our $X$ is $2$-regular.

Finally, the part $(b)\Rightarrow (f)$ is also a fairly known fact (e.g. \cite{EH,Na}) and this completes the proof.
\end{proof}
\begin{Remk}[Geometric description of VMDs] Classically, the geometric classification of VMD has been known as \textit{del Pezzo-Bertini} classification. It says that every VMD which is not a linear space is either a hyperquadric, a rational normal scroll, or a cone over the Veronese surface in $\P^5$. For a modern treatment, see \cite{EH}.
\end{Remk}

\subsection{Remarks for the proofs}\label{direct_cal}

It seems to be worthwhile to write down the calculations in the proof of Theorem \ref{sharp_ubd} rather than to do it over through proof by induction. It makes one to see \textit{how one could obtain such an upper bound (\ref{extr_bd})} more clearly and gives some inspiration for the \textit{next-to-extremal case}.\\

Let us begin by meditating the formula (\ref{ubd}) a bit more. For any associated sequence of iterated general inner projections $\{X=X^{(e_0)},\cdots,X^{(e)},\cdots,X^{(1)}\}$, this formula (\ref{ubd}) says to us that 
\begin{equation}
\beta_{p,1}(X^{(e)}) \le \beta_{p-1,1}(X^{(e-1)})+\beta_{p,1}(X^{(e-1)})+{e\choose p}
\end{equation}
for every pair $(e,p)$. Figuratively speaking, \textit{one father} (i.e. $\beta_{p,1}(X^{(e)})$) has \textit{two sons} (i.e. $\beta_{p-1,1}(X^{(e-1)})$ and $\beta_{p,1}(X^{(e-1)})$) and leaves an \textit{inheritance} (i.e. $\displaystyle{e \choose p}$) to them. 

For instance, if we keep on doing this from the forefather $\beta_{p_0,1}(X^{(e_0)})$ (for simplicity, denote it by $\beta_{p_0,1}^{(e_0)}$) to fourth generation under, they become such a family and have the inheritance as appeared in Figure \ref{4gens} (page \pageref{4gens}). Here, the forefather's \textit{worth} (i.e. the value of Betti number) can be counted as the worth of all his descendants in \textit{last} (so, fourth) generation and \textit{all} the inheritances they left up to that time.

\begin{figure}[!htbp]
\begin{align*}
\renewcommand{\arraystretch}{2}
{\setlength\arraycolsep{-5pt}
\begin{array}{ccccccc}
&&&\beta_{p_0,1}^{(e_0)}&&&\\
&&\beta_{p_0 -1,1}^{(e_0-1)}&&\beta_{p_0,1}^{(e_0-1)}&&\\
&\beta_{p_0 -2,1}^{(e_0 -2)}&& 2\beta_{p_0 -1,1}^{(e_0-2)}&&\beta_{p_0,1}^{(e_0-2)}&\\
\beta_{p_0 -3,1}^{(e_0 -3)}&&3\beta_{p_0 -2,1}^{(e_0-3)}&&3\beta_{p_0 -1,1}^{(e_0-3)}&&\beta_{p_0,1}^{(e_0-3)}
\end{array}}&&&
\renewcommand{\arraystretch}{2}
{\setlength\arraycolsep{-5pt}
\begin{array}{ccccccc}
\\
&&&{e_0\choose p_0}&&&\\
&&{e_0 -1\choose p_0-1}&&{e_0-1\choose p_0}&&\\
&{e_0 -2\choose p_0-2}&& 2{e_0-2\choose p_0-1}&&{e_0-2\choose p_0}&
\end{array}}
\end{align*}
\caption{Four generations of Betti numbers (on the Left side) and their inheritances (on the Right side). Note that both of them form Pascal's triangle.}
\label{4gens}
\end{figure}

Since $\beta_{p,1}^{(e)}=0$ for any pair $(e,p)$ such that $p\le 0$ or $p>e$ (Lemma \ref{Kp,1} (a)), let us continue this \textit{Birth-Inheritance Game} (see Figure \ref{birth_inher_game} of page \pageref{birth_inher_game}) 
till all the $\beta_{p_0,1}^{(p_0)},\beta_{p_0-1,1}^{(p_0-1)},\cdots,\beta_{1,1}^{(1)}$ on the diagonal appear. Then, we can bound $\beta_{p_0,1}^{(e)}$ as follows:
\begin{align*}
\beta_{p_0,1}^{(e_0)}&\le \underbrace{\sum_{i=0}^{p_0-1}{e_0-p_0-1+i\choose i}\cdot\beta_{p_0-i,1}^{(p_0-i)}}_{(A)}&+ \underbrace{\sum_{i=0}^{p_0-1}\left\{\sum_{j=0}^{e_0-p_0-1} {i+j\choose i}{e_0-i-j\choose p_0-i}\right\}}_{(B)}
\end{align*}
, where $(A)$ corresponds to the sum of diagonal Betti numbers in Figure \ref{birth_inher_game} and $(B)$ corresponds to all the inheritance there (i.e. the sum of \textit{bold-faced} binomial numbers in the lower parallelogram).

Now, if we do this game once more on the Betti numbers on $(A)$, it follows that
\begin{align}\label{bd_by_AB1}
(A)&\le \sum_{i=0}^{p_0-1}{e_0-p_0+i\choose i}{p_0-i\choose p_0-i}={e_0\choose e_0-p_0+1}=:(A)^\prime
\end{align}
and
\begin{align}\label{bd_by_AB2}
\beta_{p_0,1}^{(e_0)}\le(A)+(B)&\le (A)^\prime+(B)= \sum_{i=0}^{p_0-1}\left\{\sum_{j=0}^{e_0-p_0} {i+j\choose i}{e_0-i-j\choose p_0-i}\right\}\\ \nonumber
&=\sum_{i=0}^{p_0-1}{e_0+1\choose p_0+1}=p_0{e_0+1\choose p_0+1}
\end{align}
by the binomial identities (\ref{bino_id_a}) and (\ref{bino_id_c}). Hence, we obtain the desired upper bounds, which represent the Betti numbers of VMD.\\

\definecolor{yqqqqq}{rgb}{0.5,0,0}
\definecolor{qqqqff}{rgb}{0,0,1}
\definecolor{yqyqyq}{rgb}{0.5,0.5,0.5}
\definecolor{cqcqcq}{rgb}{0.75,0.75,0.75}

\begin{figure}[!htbp]
\begin{tikzpicture}[line cap=round,line join=round,>=triangle 45,x=0.85cm,y=0.85cm]
\clip(-1.25,0.74) rectangle (15.83,13.36);
\draw [line width=0.4pt,dash pattern=on 4pt off 4pt,color=yqyqyq,domain=-1.25:15.83] plot(\x,{(7--8*\x)/10});
\draw [line width=1pt,dash pattern=on 4pt off 6pt] (0,0.74) -- (0,13.36);
\draw [line width=1pt,dash pattern=on 4pt off 6pt,domain=-1.25:15.83] plot(\x,{(--10.08-0*\x)/8.4});
\draw (12.3,13.22) node[anchor=north west] {$``e=p"~\textit{line}$};

\draw (5.23,13.1) node[anchor=north west] {\small$\beta^{(e_0)}_{p_0,1}$};
\draw (3.77,11.96) node[anchor=north west] {\small$\beta^{(e_0 -1)}_{p_0 -1,1}$};
\draw (5.23,11.96) node[anchor=north west] {\small$\beta^{(e_0 -1)}_{p_0,1}$};
\draw (5.23,10.78) node[anchor=north west] {\small$\beta^{(e_0 -2)}_{p_0,1}$};
\draw (3.61,10.76) node[anchor=north west] {\small$2\beta^{(e_0 -2)}_{p_0 -1,1}$};
\draw (2.19,10.76) node[anchor=north west] {\small$\beta^{(e_0 -2)}_{p_0 -2,1}$};
\draw (0.7,9.52) node[anchor=north west] {\small$\beta^{(e_0 -3)}_{p_0 -3,1}$};
\draw (2.11,9.52) node[anchor=north west] {\small$3\beta^{(e_0 -3)}_{p_0 -2,1}$};
\draw (3.65,9.52) node[anchor=north west] {\small$3\beta^{(e_0 -3)}_{p_0 -1,1}$};
\draw (5.23,9.54) node[anchor=north west] {\small$\beta^{(e_0 -3)}_{p_0,1}$};
\draw (5.3,7.18) node[anchor=north west] {\small$c_0\beta^{(p_0 +1)}_{p_0,1}$};
\draw (5.35,6) node[anchor=north west] {\small$c_0\beta^{(p_0)}_{p_0,1}$};
\draw (3.6,6) node[anchor=north west] {\small$c_1\beta^{(p_0)}_{p_0-1,1}$};
\draw (3.6,4.76) node[anchor=north west] {\small$c_1\beta^{(p_0-1)}_{p_0-1,1}$};
\draw (1.9,4.74) node[anchor=north west] {\small$c_2\beta^{(p_0 -1)}_{p_0-2,1}$};
\draw (2,3.52) node[anchor=north west] {\small$c_2\beta^{(p_0-2)}_{p_0-2,1}$};
\draw (0.3,3.52) node[anchor=north west] {\small$c_3\beta^{(p_0-2)}_{p_0-3,1}$};
\draw (0.3,2.36) node[anchor=north west] {\small$c_3\beta^{(p_0-3)}_{p_0-3,1}$};

\draw (14.49,11.96) node[anchor=north west] {\small$\mathbf{{e_0\choose p_0}}$};
\draw (14.33,10.78) node[anchor=north west] {\small$\mathbf{{e_0 -1\choose p_0}}$};
\draw (12.91,10.76) node[anchor=north west] {\small$\mathbf{{e_0 -1\choose p_0-1}}$};
\draw (12.73,9.64) node[anchor=north west] {\small$\mathbf{2{e_0 -2\choose p_0-1}}$};
\draw (14.29,9.6) node[anchor=north west] {\small$\mathbf{{e_0 -2\choose p_0}}$};
\draw (11.35,9.6) node[anchor=north west] {\small$\mathbf{{e_0 -2\choose p_0-2}}$};
\draw (14.37,8.38) node[anchor=north west] {\small$\mathbf{{e_0 -3\choose p_0}}$};
\draw (12.77,8.4) node[anchor=north west] {\small$\mathbf{3{e_0 -3\choose p_0-1}}$};
\draw (11.15,8.4) node[anchor=north west] {\small$\mathbf{3{e_0 -3\choose p_0-2}}$};
\draw (9.73,8.38) node[anchor=north west] {\small$\mathbf{{e_0 -3\choose p_0-3}}$};
\draw (14.35,7.18) node[anchor=north west] {\small$\mathbf{{e_0 -4\choose p_0}}$};
\draw (12.75,7.2) node[anchor=north west] {\small$\mathbf{4{e_0 -4\choose p_0-1}}$};
\draw (11.17,7.2) node[anchor=north west] {\small$\mathbf{6{e_0 -4\choose p_0-2}}$};
\draw (9.57,7.2) node[anchor=north west] {\small$\mathbf{4{e_0 -4\choose p_0-3}}$};
\draw (8.17,7.18) node[anchor=north west] {\small$\mathbf{{e_0 -4\choose p_0-4}}$};
\draw (14.1,4.48) node[anchor=north west] {\small$\mathbf{c_0{p_0+1\choose p_0}}$};
\draw (12.63,3.32) node[anchor=north west] {\small$\mathbf{c_1{p_0\choose p_0-1}}$};
\draw (11.11,2.1) node[anchor=north west] {\small$\mathbf{c_2{p_0-1\choose p_0-2}}$};

\draw [color=green!40!blue](-0.63,8.6) node[anchor=north west] {$\mathbf{\vdots}$};
\draw [color=green!40!blue](0.23,1.25) node[anchor=north west] {$\mathbf{\cdots}$};
\draw [color=green!40!blue](-1.3,13.56) node[anchor=north west] {e (codim.)};
\draw [color=green!40!blue](11.95,1.4) node[anchor=north west] {p (homol. index)};
\draw [color=green!40!blue](5.61,1.2) node[anchor=north west] {\small$p_0$};
\draw [color=green!40!blue](3.79,1.3) node[anchor=north west] {\small$p_0 -1$};
\draw [color=green!40!blue](2.37,1.3) node[anchor=north west] {\small$p_0 -2$};
\draw [color=green!40!blue](-0.91,10.6) node[anchor=north west] {\small$e_0 -2$};
\draw [color=green!40!blue](-0.91,11.78) node[anchor=north west] {\small$e_0 -1$};
\draw [color=green!40!blue](-0.67,12.82) node[anchor=north west] {\small$e_0$};
\draw [color=green!40!blue](1.07,1.3) node[anchor=north west] {\small$p_0 -3$};
\draw [color=green!40!blue](-0.89,9.42) node[anchor=north west] {\small$e_0 -3$};

\draw (5.61,8.22) node[anchor=north west] {$\vdots$};
\draw (4.13,8.22) node[anchor=north west] {$\vdots$};
\draw (2.89,8.26) node[anchor=north west] {$\vdots$};
\draw (1.69,8.28) node[anchor=north west] {$\vdots$};
\draw (1.69,7.06) node[anchor=north west] {$\vdots$};
\draw (2.93,7.08) node[anchor=north west] {$\vdots$};
\draw (4.09,7.1) node[anchor=north west] {$\vdots$};
\draw (2.87,5.88) node[anchor=north west] {$\vdots$};
\draw (1.65,5.9) node[anchor=north west] {$\vdots$};
\draw (1.61,4.64) node[anchor=north west] {$\vdots$};
\draw (14.91,5.9) node[anchor=north west] {\bf$\vdots$};
\draw (13.63,5.86) node[anchor=north west] {\bf$\vdots$};
\draw (12.07,5.86) node[anchor=north west] {\bf$\vdots$};
\draw (10.41,5.86) node[anchor=north west] {\bf$\vdots$};
\draw (7.09,4.66) node[anchor=north west] {\bf$\vdots$};
\draw (8.77,4.68) node[anchor=north west] {\bf$\vdots$};
\draw (10.43,4.7) node[anchor=north west] {\bf$\vdots$};
\draw (13.57,4.78) node[anchor=north west] {\bf$\vdots$};
\draw (10.39,3.44) node[anchor=north west] {\bf$\vdots$};
\draw (8.77,3.44) node[anchor=north west] {\bf$\vdots$};
\draw (8.77,2.26) node[anchor=north west] {\bf$\vdots$};
\draw (7.09,3.5) node[anchor=north west] {\bf$\vdots$};
\draw (7.09,2.26) node[anchor=north west] {\bf$\vdots$};
\draw (0.41,8.26) node[anchor=north west] {$\cdots$};
\draw (0.37,7.02) node[anchor=north west] {$\cdots$};
\draw (0.37,5.8) node[anchor=north west] {$\cdots$};
\draw (0.37,4.6) node[anchor=north west] {$\cdots$};
\draw (5.61,3.44) node[anchor=north west] {\bf$\vdots$};
\draw (5.61,2.28) node[anchor=north west] {\bf$\vdots$};
\draw (4.19,2.26) node[anchor=north west] {\bf$\vdots$};
\draw (8.77,5.84) node[anchor=north west] {\bf$\vdots$};
\draw (12.07,4.72) node[anchor=north west] {\bf$\vdots$};
\draw (12.07,3.54) node[anchor=north west] {\bf$\vdots$};
\draw (10.37,2.26) node[anchor=north west] {\bf$\vdots$};
\draw (8.77,1.36) node[anchor=north west] {\bf$\vdots$};
\draw (7.09,1.36) node[anchor=north west] {\bf$\vdots$};
\draw (10.37,1.36) node[anchor=north west] {\bf$\vdots$};
\draw (-0.83,7.02) node[anchor=north west] {$\cdots$};
\draw (-0.83,5.8) node[anchor=north west] {$\cdots$};
\draw (-0.83,4.6) node[anchor=north west] {$\cdots$};
\draw (-0.87,3.42) node[anchor=north west] {$\cdots$};
\draw (-0.87,2.24) node[anchor=north west] {$\cdots$};
\end{tikzpicture}
\caption{\textsf{Birth-Inheritance Game}. People (i.e. \textit{Betti number} $\beta_{p,1}^{(e)}\textrm{'s}$) are located, according to the pair $(e,p)$, in the upper triangular area and all their inheritance (i.e. \textbf{bold-faced} \textit{binomial numbers}) are stacked up in the lower triangular area. Each person gives birth to two sons and leaves the inheritance until one reaches the diagonal (i.e. \textit{$e=p$ line}). Note that all the inheritances form a parallelogram and the coefficient $c_i={e_0-p_0-1+i\choose i}$.}
\label{birth_inher_game}
\end{figure}
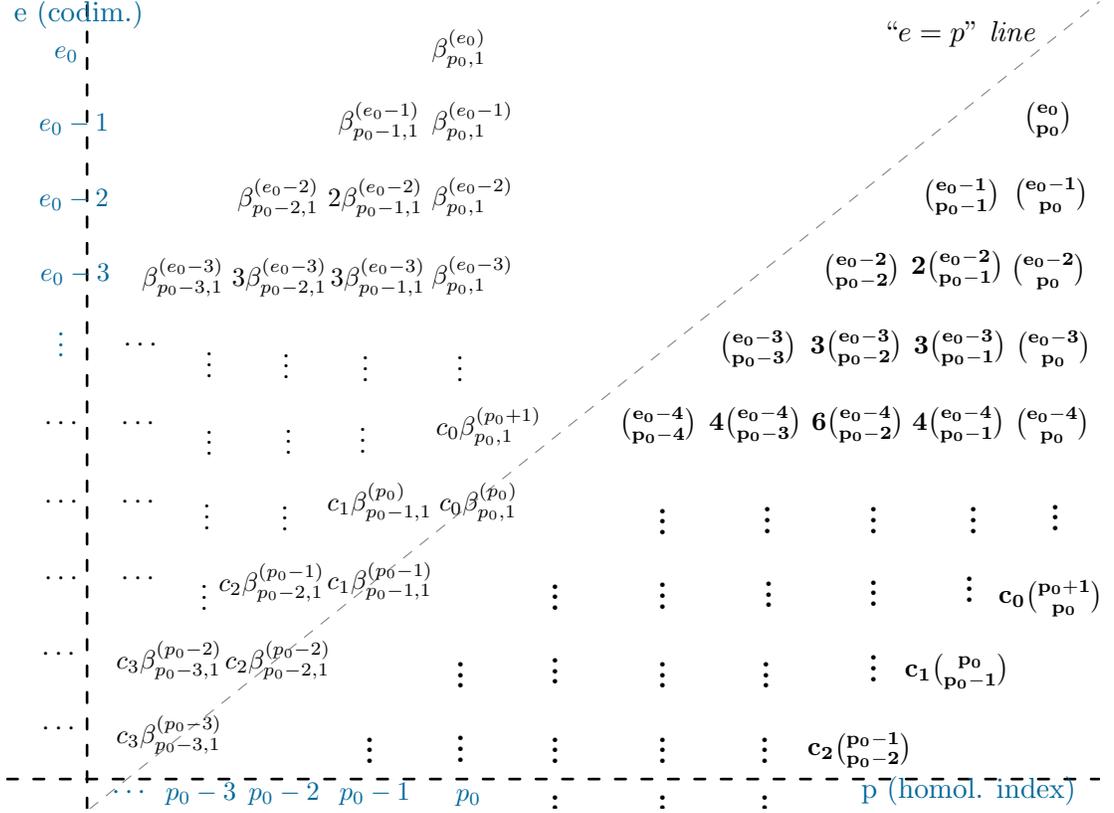

\bigskip

\section{Next-to-extremal case}\label{n_extr}

\begin{Thm}\label{n_extr_case}
Let $X^n\subset\P^{n+e}$ be any nondegenerate variety of codim $e$. Unless $X$ is a variety of minimal degree, then we have
\begin{equation}\label{n_extr_bd}
\beta_{p,1}(X)\leq \left\{
\begin{array}{cl} \displaystyle p{e+1 \choose p+1}-{e\choose p-1} &(1\le p\le e)\\
&\\
\displaystyle 0 & (p>e)\quad.
\end{array}\right.
\end{equation}
\end{Thm}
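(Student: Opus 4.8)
The plan is to re-use, essentially verbatim, the iterated inner-projection reduction (\ref{iter_inner}) and the Birth--Inheritance Game of Figure \ref{birth_inher_game} from the proof of Theorem \ref{sharp_ubd}, and to inject into it the single extra hypothesis that $X$ is not of minimal degree. The point is that in that game the only quantities sensitive to minimality are the \emph{diagonal} entries $\beta_{c,1}^{(c)}$ on the line $e=p$ — the top linear syzygies of the successive projected varieties — collected in the sum $(A)$; the inheritance sum $(B)$ is purely combinatorial. So I would first separate the case $p>e$, which is Lemma \ref{Kp,1}, and then reduce the whole statement for $1\le p\le e$ to a single rigidity input.

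\textbf{Key Lemma.} \emph{If $Y^m\subset\P^{m+c}$ is a nondegenerate variety of codimension $c\ge1$ that is not of minimal degree, then $\beta_{c,1}(Y)=0$; equivalently, a nonzero top linear syzygy $\beta_{c,1}(Y)\ne0$ forces $Y$ to be a variety of minimal degree.} Granting this, the theorem follows at once: inner projection from a general point preserves the $\Delta$-genus (Figure \ref{fig_versus}), so every member $X^{(c)}$ of the chain (\ref{iter_inner}) satisfies $\Delta_{X^{(c)}}=\Delta_X>0$ and is again non-minimal; the Key Lemma then makes every diagonal term $\beta_{c,1}^{(c)}$ vanish, whence $(A)=0$. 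Since the extremal bookkeeping (\ref{bd_by_AB1})--(\ref{bd_by_AB2}) already records $(A)'=\binom{e}{e-p+1}=\binom{e}{p-1}$ and $(A)'+(B)=p\binom{e+1}{p+1}$, I conclude $\beta_{p,1}(X)\le(A)+(B)=(B)=p\binom{e+1}{p+1}-\binom{e}{p-1}$, which is exactly (\ref{n_extr_bd}); note that for $p=1$ this recovers Fano's bound from the vanishing of $\beta_{1,1}^{(1)}$ on the bottom hypersurface.

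It remains to prove the Key Lemma, which I would do by induction on $c$. The base case $c=1$ is clear, since an irreducible hypersurface of degree $\ge3$ contains no quadric, so $\beta_{1,1}(Y)=0$. For the inductive step I take a general inner projection $Y_{\texttt{q}}$, again non-minimal of codimension $c-1$, and apply the sharp inequality (\ref{ubd}) of Theorem \ref{1st_str_ineq}: by Lemma \ref{Kp,1} one has $\beta_{c,1}(Y_{\texttt{q}})=0$, and by the inductive hypothesis $\beta_{c-1,1}(Y_{\texttt{q}})=0$, so (\ref{ubd}) collapses to $\beta_{c,1}(Y)\le\binom{t}{c}$ with $t=\dim_k(K_1(I_Y))_1\le c$. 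When $t<c$, i.e. $s(\texttt{q})>1$ (Remark \ref{suff_s_1}), the binomial is $0$ and we are done.

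The main obstacle is the borderline case $t=c$, i.e. $s(\texttt{q})=1$, where the crude estimate (\ref{ubd}) leaves a spurious $\binom{c}{c}=1$ coming from the top Koszul class of the $c$ linear forms cutting out the tangent space; this is exactly the regime occupied by del Pezzo varieties, and it cannot be removed by upper bounds alone. To kill it I would pass to the refined half of Theorem \ref{1st_str_ineq}(b): when $a=a(Y)\ge1$ the exact formula (\ref{Np_eq_2}) carries the correction $-\beta_{a,2}(Y_{\texttt{q}})$, and it is precisely this second-strand term that cancels the spurious $+1$ as the linear strand tries to reach homological degree $c$. In general a surviving $\beta_{c,1}(Y)\ne0$ forces the linear strand to attain its maximal length $b(Y)=c+1$, which by the rigidity of property $\textbf{N}_{2,p}$ (the implication $(c)\Rightarrow(b)$ of Theorem \ref{char_VMD}) means $2$-regularity, hence $Y$ is of minimal degree, contradicting the hypothesis. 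Making this last propagation rigorous — that a non-vanishing top linear syzygy cannot coexist with the second-strand syzygies of a non-minimal variety — is the crux; everything else is the formal accounting of the game together with the Vandermonde identities (\ref{bino_id_a}) and (\ref{bino_id_c}).
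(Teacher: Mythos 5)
Your reduction of the theorem to the Key Lemma is exactly the paper's argument: the paper also runs the Birth--Inheritance Game along an iterated general inner projection chain, observes (via the Trisecant lemma, which is what underwrites your $\Delta$-genus computation) that every member of the chain remains non-minimal, kills the diagonal sum $(A)$ by the vanishing of the top linear syzygies of the projected varieties, and reads off $(B)=p\binom{e+1}{p+1}-\binom{e}{p-1}$ from (\ref{bd_by_AB1})--(\ref{bd_by_AB2}). The one difference is that the paper does not prove your Key Lemma internally: it is stated as Lemma \ref{Kp,1_b} (``Generalized $K_{p,1}$-theorem (b)'') and imported as a direct consequence of theorem 3.5 of \cite{NP}.

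The genuine gap is therefore your inductive proof of the Key Lemma, and it sits exactly where you locate it. In the borderline case $t=c$ the inequality (\ref{ubd}) only yields $\beta_{c,1}(Y)\le\binom{c}{c}=1$, and your proposed mechanism for excluding $\beta_{c,1}(Y)=1$ does not close. A nonzero $\beta_{c,1}(Y)$ tells you that the linear strand reaches homological index $c$, i.e.\ $b(Y)\ge c+1$; it does not tell you that $a(Y)\ge c$, which is what the rigidity implication $(c)\Rightarrow(b)$ of Theorem \ref{char_VMD} requires, so that implication cannot be triggered. (A del Pezzo variety already has $b=e$ while $a=e-1$, and nothing a priori prevents a non-minimal variety from having a long but impure linear strand.) Likewise the exact formula (\ref{Np_eq_2}) is available only at the single index $p=a(Y)+1$, which need not equal $c$, so the ``second-strand correction'' $-\beta_{a,2}(Y_{\texttt{q}})$ cannot be invoked at the top of the strand in general. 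To make the Key Lemma rigorous you must either cite the Nagel--Pitteloud theorem, as the paper does, or supply a genuinely new argument that the surviving top Koszul class of the $c$ tangent linear forms is a boundary; the upper-bound machinery of Theorem \ref{1st_str_ineq} by itself cannot distinguish $\beta_{c,1}=0$ from $\beta_{c,1}=1$ in this regime.
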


Note that $\displaystyle p{e+1 \choose p+1}-{e\choose p-1}$ is also the $p$-th Betti number of del Pezzo varieties of codimension $e$ (e.g. \cite{Na}). Before proving Theorem \ref{n_extr_case}, we introduce another relevant lemma as a direct consequence of theorem 3.5 in \cite{NP}.

\begin{Lem}[Generalized $K_{p,1}$-theorem (b)]\label{Kp,1_b}
Let $X^n\subset\P^{n+e}$ be any nondegenerate variety of codim $e$. Unless $X$ is a variety of minimal degree, then we have
\[\beta_{e,1}(X)=0~.\]
\end{Lem}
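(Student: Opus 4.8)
The plan is to run an induction on the codimension $e$, using the inner-projection reduction that drives the rest of the paper and isolating the single boundary term that has to be killed. For the base case $e=1$, a non-VMD hypersurface has degree $\ge 3$ and hence carries no quadric, so $\beta_{1,1}(X)=0$ as required. For the inductive step I would take a general (hence smooth) point $\texttt{q}\in X$ and pass to the inner projection $X_{\texttt{q}}\subset\P^{n+e-1}$, of codimension $e-1$. The crucial preliminary remark is that inner projection preserves the non-minimal-degree condition: since $\deg X_{\texttt{q}}=\deg X-1$ and $\codim X_{\texttt{q}}=e-1$, the equality $\deg X_{\texttt{q}}=\codim X_{\texttt{q}}+1$ would force $\deg X=e+1$, i.e.\ $X$ a VMD. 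Thus $X$ non-VMD implies $X_{\texttt{q}}$ non-VMD, and the inductive hypothesis applies, yielding $\beta_{e-1,1}(X_{\texttt{q}})=0$.

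Next I would feed this into the fundamental inequality (\ref{ubd}) of Theorem \ref{1st_str_ineq} at $p=e$. Because $\codim X_{\texttt{q}}=e-1<e$, Lemma \ref{Kp,1} gives $\beta_{e,1}(X_{\texttt{q}})=0$, and the inductive hypothesis gives $\beta_{e-1,1}(X_{\texttt{q}})=0$; hence (\ref{ubd}) collapses to $\beta_{e,1}(X)\le\binom{t}{e}\le 1$, where $t=\dim_k(K_1(I_X))_1\le e$. If $t<e$ then $\binom{t}{e}=0$ and we are done. The only surviving case is $t=e$, equivalently $s(\texttt{q})=1$ by Remark \ref{suff_s_1}, in which $K_1(I_X)$ consists of $e$ independent linear forms and the crude bound yields merely $\beta_{e,1}(X)\le 1$.

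This boundary case is the genuine obstacle, and it is not vacuous: an intersection of two quadrics (a del Pezzo, hence non-VMD) already realizes $t=e$ and yet must have $\beta_{e,1}=0$, so the estimate (\ref{ubd}) alone cannot suffice. To handle it I would discard the crude step $\dim\ker\mu\le\dim\ker\upsilon$ and instead read off the exact count from the commutative diagram in the proof of Theorem \ref{1st_str_ineq}. Tracking the short exact sequence (\ref{ses_PEI_3}) together with the two vanishings above, one checks that the Koszul complex of the $e$ linear forms in $K_1(I_X)$ contributes exactly $\binom{e}{e}=1$ to $\widetilde K_1(I_X)$ in the relevant degree, so that (\ref{ker_est}) upgrades the inequality to the identity $\beta_{e,1}(X)=1-\dim(\im\mu\cap\ker\nu)$. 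Consequently $\beta_{e,1}(X)=0$ is equivalent to the nonvanishing of the correction $\dim(\im\mu\cap\ker\nu)$, i.e.\ to the statement that the top tangent-space Koszul class is an $x_0$-multiple that dies in $I_X/\widetilde K_1(I_X)$.

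The hard part is precisely this last nonvanishing: one must show that for a non-VMD variety the top Koszul class cannot survive, which is the rigidity phenomenon that $\beta_{e,1}$ admits no intermediate value between $0$ (non-VMD) and its maximum $e$ (VMD). This is exactly the content of the geometric $K_{p,1}$-theorem, and accordingly I would invoke Theorem 3.5 of \cite{NP}, whose conclusion in our normalization asserts that the linear strand of a non-VMD variety terminates strictly before homological degree $e$; this forces $\dim(\im\mu\cap\ker\nu)=1$ and hence $\beta_{e,1}(X)=0$. The del Pezzo example indicates the mechanism intrinsically—there $a(X)=e-1$ and the sharp equality (\ref{Np_eq_2}) supplies the cancelling term $-\beta_{e-1,2}(X_{\texttt{q}})$—but pinning down $a(X)$ in general is itself equivalent to the rigidity we are after, so appealing to \cite{NP} is the cleanest route in the stated generality of arbitrary characteristic and possibly singular $X$.
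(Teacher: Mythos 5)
The paper gives no independent argument for this lemma at all: it states the result as a direct consequence of Theorem~3.5 in \cite{NP} and moves on. Your proposal ultimately rests on exactly the same citation, so in substance the two proofs coincide; the difference is that you precede the citation with an inner-projection induction that, on inspection, buys nothing. Your reduction (base case $e=1$, preservation of non-minimal degree under general inner projection, Lemma~\ref{Kp,1} plus the inductive hypothesis fed into (\ref{ubd})) is correct and does collapse the problem to the single case $t=e$, where $\beta_{e,1}(X)\le\binom{t}{e}=1$; and your diagnosis of that case is also essentially right --- chasing the long exact sequence of (\ref{ses_PEI_3}) with the two vanishings $\beta_{e,1}(X_{\texttt{q}})=\beta_{e-1,1}(X_{\texttt{q}})=0$ shows that $\beta_{e,1}(X)$ equals $1$ minus the rank of the connecting map on the one-dimensional top Koszul class of the $e$ linear forms in $K_1(I_X)$. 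But the nonvanishing of that connecting map is precisely the geometric content of the $K_{p,1}$-theorem and cannot be extracted from the formal Tor bookkeeping of Theorem~\ref{1st_str_ineq} (your ``one checks'' step is not actually carried out, and the complete intersection of two quadrics shows it is not automatic). Since Theorem~3.5 of \cite{NP} already asserts the vanishing of the entire linear strand from homological degree $e$ onward for non-VMDs, once you decide to invoke it the induction and the diagram chase are superfluous; if you want the machinery to earn its keep, the honest statement is that the lemma is an external input here, exactly as the paper treats it.
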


Now, let's prove next-to-extremal upper bounds on $\beta_{p,1}$'s.

\begin{proof}[Proof of Theorem \ref{n_extr_case}]
First, we note that a general inner projection $X_{\texttt{q}}$ is not of minimal degree, unless $X$ is of minimal degree (due to so-called \textit{Trisecant lemma}). Similarly as in the proof of extremal bounds, take a sequence of iterated general inner projections $\{X=X^{(e)},X^{(e-1)},\cdots,X^{(1)}\}$. As discussed in subsection \ref{direct_cal}, we could bound
\begin{align*}
\beta_{p,1}(X)&\le \underbrace{\sum_{i=0}^{p-1}{e-p-1+i\choose i}\cdot\beta_{p-i,1}(X^{(p-i)})}_{(A)}+ \underbrace{\sum_{i=0}^{p-1}\left\{\sum_{j=0}^{e-p-1} {i+j\choose i}{e-i-j\choose p-i}\right\}}_{(B)}\\
&=(B)=p{e+1\choose p}-(A)^\prime=p{e+1\choose p} -{e-1\choose p}\quad\textrm{(see (\ref{bd_by_AB1}) and (\ref{bd_by_AB2})).}
\end{align*}
, because $\beta_{p-i,1}(X^{(p-i)})=0$ for every $0\le i\le p-1$ by Lemma \ref{Kp,1_b}.
\end{proof}

As an application, we can also add new characterizations of del Pezzo varieties which generalize Fano's classical bound on quadrics to higher linear syzygy level.

\begin{Thm}\label{char_del Pezzo}
Let $X^n\subset \P^{n+e}$ be a nondegenerate variety with $e\ge2$. Then, the following are all equivalent:
\begin{itemize}
\item[(a)] $X$ is a del Pezzo variety.
\item[(b)] $a(X)=e-1$.
\item[(c)] $h^0(\P^{n+e}, \mathcal I_X(2))$=${e+1 \choose 2}-1$.
\item[(d)] one of $\beta_{p,1}(X)\textrm{'s}$ achieves the upper bound (\ref{n_extr_bd}) for some $1\le p\le e-1$.
\item[(e)] all $\beta_{p,1}(X)\textrm{'s}$ achieve the upper bound (\ref{n_extr_bd}).
\end{itemize}
\end{Thm}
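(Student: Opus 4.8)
The plan is to run the same cyclic strategy as in the proof of Theorem \ref{char_VMD}, adapted from the extremal to the next-to-extremal regime: the classical and structural equivalences are taken from known results, the known Betti table of a del Pezzo variety supplies $(a)\Rightarrow(e)$, the implication $(e)\Rightarrow(d)$ is trivial, and essentially all of the new content is concentrated in the single implication $(d)\Rightarrow(c)$, which I would handle by the inner-projection trace of subsection \ref{direct_cal}. Once $(d)\Rightarrow(c)$ is in hand the cycle closes: $(a)\Leftrightarrow(b)\Leftrightarrow(c)$ together with $(a)\Rightarrow(e)\Rightarrow(d)\Rightarrow(c)$ makes all five conditions equivalent.

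For the structural part I would first record $(a)\Leftrightarrow(c)$, which is precisely Fano's classical theorem (item (b) of the Introduction): for a nondegenerate non-VMD variety, $h^0(\P^{n+e},\mathcal I_X(2))=\binom{e+1}{2}-1$ holds exactly when $X$ is del Pezzo. Next, $(a)\Leftrightarrow(b)$ follows from the del Pezzo--Bertini type classification together with the rigidity of property $\textbf{N}_{2,p}$ already used for $(b)\Leftrightarrow(c)$ of Theorem \ref{char_VMD}: a del Pezzo variety is $3$-regular, ACM, and satisfies $\textbf{N}_{2,e-1}$ but not $\textbf{N}_{2,e}$, so $a(X)=e-1$, and conversely. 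Finally $(a)\Rightarrow(e)$ is the statement that the first linear strand of a del Pezzo variety realizes exactly the numbers $p\binom{e+1}{p+1}-\binom{e}{p-1}$, which is known (\cite{Na}), and $(e)\Rightarrow(d)$ is immediate.

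The core step $(d)\Rightarrow(c)$ I would prove by induction on the homological index $p$, exactly paralleling $(e)\Rightarrow(d)$ of Theorem \ref{char_VMD} but with the next-to-extremal bound (\ref{n_extr_bd}) and Lemma \ref{Kp,1_b} in place of Lemma \ref{Kp,1}. The base case $p=1$ is tautological. For $p=m+1$, assume $\beta_{m+1,1}(X)$ meets the bound (\ref{n_extr_bd}). Running the Birth--Inheritance computation (\ref{ind_2}) along a chain of iterated general inner projections $\{X=X^{(e)},\dots,X^{(m)}\}$ and comparing the resulting upper bound against the lower bound (\ref{lbd}), extremality at the top forces every intermediate inequality to be an equality; by Remark \ref{suff_s_1} this pins $\dim_k[K_1(I^{(i)})_1]=\codim X^{(i)}$, hence $s=1$, at every reduction step from $X$ to $X^{(m)}$, and simultaneously forces $\beta_{m,1}(X^{(m)})$ to realize the codimension-$m$ next-to-extremal value. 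The induction hypothesis then gives $\beta_{1,1}(X^{(m)})=\binom{m+1}{2}-1$, and telescoping the quadric formula (\ref{eq_num_quad}) with $t=\codim$ at every step yields $\beta_{1,1}(X)=\beta_{1,1}(X^{(m)})+\sum_{i=m+1}^{e}i=\binom{e+1}{2}-1$, which is $(c)$.

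The main obstacle is the equality analysis inside $(d)\Rightarrow(c)$: one must verify that a single extremal value (\ref{n_extr_bd}) really forces all the inequalities (\ref{ubd}) together with the correction terms $\beta_{p-1,2}(X_{\texttt{q}})$ and $\beta^S_{p-1,2}(K_1(I))$ of the lower bound (\ref{lbd}) to saturate or vanish simultaneously at every node of the projection tree, while the diagonal contributions stay zero. This last point requires that each general inner projection keep its image non-VMD --- guaranteed by the Trisecant lemma as in the proof of Theorem \ref{n_extr_case} --- so that Lemma \ref{Kp,1_b} remains available all the way down the chain. I also expect the degree to resist the purely homological machinery: unlike the VMD case, where $\beta_{1,1}(X^{(1)})=1$ forces the terminal variety to be a quadric, here the trace only yields $\beta_{1,1}(X^{(1)})=0$, which bounds the degree of the terminal hypersurface below by $3$ but does not pin it to $3$; this is exactly why the degree-$(e+2)$ conclusion is routed through Fano's theorem in $(c)\Rightarrow(a)$ rather than through regularity lifting alone.
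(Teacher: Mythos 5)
Your overall architecture matches the paper's: $(a)\Leftrightarrow(b)$ quoted from known results, the known del Pezzo resolution giving $(e)$, the trivial $(e)\Rightarrow(d)$, and the real content in $(d)\Rightarrow(c)$ via iterated general inner projections. But your inductive organization of $(d)\Rightarrow(c)$ has a genuine gap. In the extremal case the descent may stop at $X^{(m)}$ because saturation forces $\beta_{m,1}(X^{(m)})=m\binom{m+1}{m+1}=m$, the maximal value at index $p=m=\codim X^{(m)}$ --- an index still inside the range $1\le p\le e$ of condition (e) of Theorem \ref{char_VMD}, so the induction hypothesis applies there. In the next-to-extremal case the corresponding diagonal value is $m\binom{m+1}{m+1}-\binom{m}{m-1}=0$, which \emph{every} non-VMD variety of codimension $m$ attains by Lemma \ref{Kp,1_b}; this is exactly why the index $p=m$ is excluded from the range $1\le p\le e-1$ in condition (d). So ``$\beta_{m,1}(X^{(m)})$ realizes the codimension-$m$ next-to-extremal value'' carries no information, and your step ``the induction hypothesis then gives $\beta_{1,1}(X^{(m)})=\binom{m+1}{2}-1$'' is unjustified --- and false for any non-del Pezzo, non-VMD variety in the role of $X^{(m)}$. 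The paper avoids induction entirely: the Birth--Inheritance expansion of $\beta_{p,1}(X)$ descends along the full chain to $X^{(1)}$, all diagonal terms vanish by Lemma \ref{Kp,1_b} (the Trisecant lemma keeping every image non-VMD), so saturation of (\ref{n_extr_bd}) forces $t=\codim$, hence $s=1$, at \emph{every} reduction step down to $X^{(1)}$, and then (\ref{eq_num_quad}) telescopes from $\beta_{1,1}(X^{(1)})=0$ to $\beta_{1,1}(X)=\sum_{i=2}^{e}i=\binom{e+1}{2}-1$. Your argument is repaired by running the descent all the way to $X^{(1)}$ instead of stopping at $X^{(m)}$.

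Separately, you close the cycle at $(c)\Rightarrow(a)$ by invoking Fano's classical theorem, whereas the paper derives this implication internally: condition (c) again forces $s=1$ at every step, the quadric count shows $X^{(2)}$ lies on two quadrics and hence is a complete intersection of two quadrics ($\Delta=1$ and ACM), and then $\Delta$-genus preservation together with depth lifting (Theorem \ref{depth_inn}, available because $s=1$) carries ``next-to-minimal degree and ACM'' back up to $X$. Citing Fano is logically admissible (after noting that (c) excludes the VMD case via Theorem \ref{char_VMD}(d)), but it gives up the self-contained reproof of Fano's theorem that the paper's route provides.
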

\begin{proof}
$(a)\Leftrightarrow(b)$ is known by theorem 4.3 (b) in \cite{HK1} and we prove by taking an order such as $(b)\Rightarrow(e)\Rightarrow(d)\Rightarrow(c)\Rightarrow(a)$.

$(b)\Rightarrow(e)$ comes from the known Betti numbers of del Pezzo varieties (e.g. \cite{Na}) and $(e)\Rightarrow(d)$ is trivial. Now let us see $(d)\Rightarrow(c)$. As seen in the proof of Theorem \ref{n_extr_case}, the equality of next-to-extremal bound on some $\beta_{p,1}(X)$ means that every reduction step from $X=X^{(e)}$ to $X^{(1)}$ for any sequence of iterated general inner projections $\{X=X^{(e)},X^{(e-1)},\cdots,X^{(1)}\}$ should have the stabilization $s=1$ and $\beta_{1,1}(X^{(1)})=0$. Thus, using the formula (\ref{eq_num_quad}) repeatedly, we obtain $\beta_{1,1}(X)={e+1\choose 2}-1$. Finally, to show $(c)\Rightarrow(a)$ note that the delta genus is preserved (see Figure \ref{fig_versus}) under each reduction (i.e. $\Delta(X^{(i+1)})=\Delta(X^{(i)})$ for every $i\ge1$) and that $X^{(2)}$ is a complete intersection of two quadrics. Since $X^{(2)}$ is a variety of next-to-minimal degree (i.e. $\Delta=1$) and ACM, we conclude that our original $X$ is also of next-to-minimal degree and ACM (\textit{depth} can be lifted by 
Theorem \ref{depth_inn} whenever $s=1$), in other words a del Pezzo variety.
\end{proof}

\begin{Remk}[Geometric characterization of del Pezzo]
Some works on the geometric characterization/classification of del Pezzo varieties have been done by Fujita for mainly normal singularities and recently by Brodmann and Park for non-normal cases (see Remark 4.4 (b) in \cite{HK1} for references). 
\end{Remk}

\section{Examples and questions}\label{ex_qu}

\paragraph{\bf More general categories} As we explored through Theorem \ref{sharp_ubd} and \ref{char_VMD}, in the category of $k$-varieties $\mathsf{Var}(k)$ all the notions \textit{minimal degree, 2-regularity,} and \textit{maximal Betti numbers} are same. How about more general categories?

In \cite{EGHP2} they appointed `2-regularity' as a generalization of the notion of `minimal degree', clarified its geometric meaning (so-called \textit{smallness}), and classify them completely in the category of algebraic sets $\mathsf{AlgSet}(k)$. We could also attempt to extend the notion of `maximal Betti numbers' and to generalize similar characterizations on them into more general categories (even though not into the whole $\mathsf{AlgSet}(k)$).

For instance, let us consider the following category. One says that any algebraic set $X=\cup X_i$ is \textit{connected in codimension 1} if $X$ is equidimensional and all the irreducible component $X_i$'s can be ordered in such a way that every $X_i\cap X_{i+1}$ is of codimension 1 in $X$. Denote the category of connected in codimension 1 algebraic sets by $\mathsf{CC}_1(k)$.

Note that a key ingredient for proofs of most of results in this paper is the reduction method via inner projections where the notion of \textit{codimension} has an important role. $\mathsf{CC}_1(k)$ is the very case in which codimension is well-defined (and the \textit{degree} is always at least codimension+1) and reduction process are well-behaved as following steps (see also Figure \ref{red_comp}): 
\begin{itemize}
\item[i)] choose one component and take iterated general inner projections within the component until the component disappear (into the intersection with other components)
\item[ii)] do these reductions component by component. 
\end{itemize}
{\setlength\abovecaptionskip{.1ex plus .125ex minus .125ex}
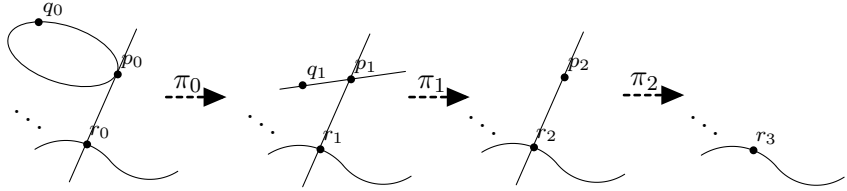
\begin{figure}[!htbp]
\begin{tikzpicture}[line cap=round,line join=round,>=triangle 45,x=0.6571741511500546cm,y=0.6557377049180326cm]
\clip(-4.3,1.42) rectangle (13.96,6.3);
\draw [rotate around={-18.07:(-2.63,4.87)}] (-2.63,4.87) ellipse (0.76cm and 0.38cm);
\draw (-1.14,5.38)-- (-2.5,2.28);
\draw [shift={(-2.57,1.94)}] plot[domain=0.72:2.13,variable=\t]({1*1.18*cos(\t r)+0*1.18*sin(\t r)},{0*1.18*cos(\t r)+1*1.18*sin(\t r)});
\draw [shift={(-0.87,3.38)}] plot[domain=3.83:5.28,variable=\t]({1*1.05*cos(\t r)+0*1.05*sin(\t r)},{0*1.05*cos(\t r)+1*1.05*sin(\t r)});
\draw (-3.88,4.32) node[anchor=north west] {$\mathbf{\ddots}$};
\draw [->,line width=1pt,dash pattern=on 2pt off 2pt] (-0.54,4) -- (0.68,4);
\draw (-0.6,4.64) node[anchor=north west] {$\pi_0$};
\draw (3.58,5.28)-- (2.22,2.18);
\draw [shift={(2.15,1.84)}] plot[domain=0.72:2.13,variable=\t]({1*1.18*cos(\t r)+0*1.18*sin(\t r)},{0*1.18*cos(\t r)+1*1.18*sin(\t r)});
\draw [shift={(3.85,3.28)}] plot[domain=3.83:5.28,variable=\t]({1*1.05*cos(\t r)+0*1.05*sin(\t r)},{0*1.05*cos(\t r)+1*1.05*sin(\t r)});
\draw (0.84,4.22) node[anchor=north west] {$\mathbf{\ddots}$};
\draw [->,line width=1pt,dash pattern=on 2pt off 2pt] (4.38,4) -- (5.6,4);
\draw (4.34,4.64) node[anchor=north west] {$\pi_1$};
\draw (1.76,4.16)-- (4.3,4.52);
\draw (7.9,5.32)-- (6.54,2.22);
\draw [shift={(6.47,1.88)}] plot[domain=0.72:2.13,variable=\t]({1*1.18*cos(\t r)+0*1.18*sin(\t r)},{0*1.18*cos(\t r)+1*1.18*sin(\t r)});
\draw [shift={(8.17,3.32)}] plot[domain=3.83:5.28,variable=\t]({1*1.05*cos(\t r)+0*1.05*sin(\t r)},{0*1.05*cos(\t r)+1*1.05*sin(\t r)});
\draw (5.31,4.2) node[anchor=north west] {$\mathbf{\ddots}$};
\draw [->,line width=1pt,dash pattern=on 2pt off 2pt] (8.72,4.04) -- (9.94,4.04);
\draw (8.64,4.7) node[anchor=north west] {$\pi_2$};
\draw [shift={(10.91,1.82)}] plot[domain=0.72:2.13,variable=\t]({1*1.18*cos(\t r)+0*1.18*sin(\t r)},{0*1.18*cos(\t r)+1*1.18*sin(\t r)});
\draw [shift={(12.61,3.26)}] plot[domain=3.83:5.28,variable=\t]({1*1.05*cos(\t r)+0*1.05*sin(\t r)},{0*1.05*cos(\t r)+1*1.05*sin(\t r)});
\draw (9.8,4.2) node[anchor=north west] {$\mathbf{\ddots}$};
\begin{scriptsize}
\fill [color=black] (-2.14,3.04) circle (1.5pt);
\draw[color=black] (-1.88,3.3) node {$r_0$};
\fill [color=black] (-1.52,4.46) circle (1.5pt);
\draw[color=black] (-1.22,4.72) node {$p_0$};
\fill [color=black] (-3.12,5.52) circle (1.5pt);
\draw[color=black] (-2.82,5.78) node {$q_0$};
\fill [color=black] (2.58,2.94) circle (1.5pt);
\draw[color=black] (2.84,3.2) node {$r_1$};
\fill [color=black] (3.2,4.36) circle (1.5pt);
\draw[color=black] (3.5,4.62) node {$p_1$};
\fill [color=black] (2.22,4.24) circle (1.5pt);
\draw[color=black] (2.52,4.5) node {$q_1$};
\fill [color=black] (6.9,2.98) circle (1.5pt);
\draw[color=black] (7.16,3.24) node {$r_2$};
\fill [color=black] (7.52,4.4) circle (1.5pt);
\draw[color=black] (7.82,4.66) node {$p_2$};
\fill [color=black] (11.34,2.92) circle (1.5pt);
\draw[color=black] (11.6,3.18) node {$r_3$};
\end{scriptsize}
\end{tikzpicture}
 \caption{How to reduce components following i) and ii) in $\mathsf{CC}_1(k)$. The dashed arrows represent inner projections $\pi_i$'s from $\textsf{q}_0$, $\textsf{q}_1$, and $\textsf{p}_2$ respectively. Note that every reduction step diminishes codimension exactly by one.}
\label{red_comp}
\end{figure}}

Therefore, our extremal bounds and characterizations for the maximal Betti numbers in $\mathsf{Var}(k)$ can be naturally generalized to this category $\mathsf{CC}_1(k)$. 
\bigskip
\begin{Thm}\label{cc1_sharp_ubd}
Let $X^n\subset\P^{n+e}$ be any nondegenerate algebraic set of codim $e\ge 1$ in $\mathsf{CC}_1(k)$. Then,
\begin{equation}\label{cc1_extr_bd}
\beta_{p,1}(X)\leq p{e+1 \choose p+1}\quad \textrm{for}~all~p\ge 0~.
\end{equation}
 Further, the following are all equivalent:
\begin{itemize}
\item[(a)] $X$ is of minimal degree in $\P^{n+e}$.
\item[(b)] $\mathcal I_X$ is 2-regular.
\item[(c)] $a(X)\ge e$.
\item[(d)] $h^0(\P^{n+e}, \mathcal I_X(2))$=${e+1 \choose 2}$.
\item[(e)] one of $\beta_{p,1}(X)\textrm{'s}$ achieves the maximum for some $1\le p\le e$.
\item[(f)] all $\beta_{p,1}(X)$ achieve the maxima.
\end{itemize}
\end{Thm}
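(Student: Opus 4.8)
The plan is to reduce everything to the machinery already assembled for $\mathsf{Var}(k)$ in Theorems \ref{sharp_ubd} and \ref{char_VMD}, the only new ingredient being that the irreducible inner projection is replaced by the component-by-component reduction of Figure \ref{red_comp}. The entire algebraic engine — the fundamental inequality (\ref{ubd}), the equality (\ref{Np_eq}), the number-of-quadrics formula (\ref{eq_num_quad}), and the descent/lift of regularity of Theorem \ref{coro_Np}(c) — is stated for arbitrary nondegenerate subschemes, hence applies verbatim to any $X\in\mathsf{CC}_1(k)$; here the local codimension $\codim_{\texttt q}(X,\P^{n+e})$ in those statements coincides with the global $e$ because $X$ is equidimensional, so the binomial terms come out as $\binom{e}{p}$. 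Thus the crux will be to verify that a general inner projection carries $\mathsf{CC}_1(k)$ into itself while dropping the codimension by exactly one, after which the two proofs of Section \ref{prf_mains} transcribe unchanged.

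First I would establish the $K_{p,1}$-vanishing $\beta_{p,1}(X)=0$ for $p>e$ in $\mathsf{CC}_1(k)$, exactly as in Lemma \ref{Kp,1}: induct on $e$, the base $e=1$ being a hypersurface, and feed (\ref{ubd}) the fact that the general inner projection $X_{\texttt q}$ again lies in $\mathsf{CC}_1(k)$ with codimension $e-1$. With this vanishing available, the bound (\ref{cc1_extr_bd}) follows by running the induction on the homological index $p$ precisely as in the proof of Theorem \ref{sharp_ubd}; the Birth--Inheritance bookkeeping of subsection \ref{direct_cal} is purely formal and relies only on (\ref{ubd}) and the vanishing, both now in force.

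For the equivalences, $(a)\Leftrightarrow(b)\Leftrightarrow(c)$ is not new: $2$-regularity is equivalent to minimal degree throughout $\mathsf{AlgSet}(k)$ by \cite{EGHP2}, and $(b)\Leftrightarrow(c)$ is the rigidity of $\mathbf{N}_{2,p}$ from \cite{EGHP1,HK1}. I would then close the cycle $(f)\Rightarrow(e)\Rightarrow(d)\Rightarrow(b)\Rightarrow(f)$ as in Theorem \ref{char_VMD}. Here $(f)\Rightarrow(e)$ is trivial; for $(e)\Rightarrow(d)$ the equality in the extremal bound forces, step by step down to $X^{(p_0)}$, the stabilization $s=1$ (equivalently $t=e$, Remark \ref{suff_s_1}) and the diagonal value, and back-substitution in (\ref{eq_num_quad}) yields $h^0(\mathcal I_X(2))=\binom{e+1}{2}$; for $(d)\Rightarrow(b)$ the maximal count of quadrics forces $s=1$ all the way to a quadric hypersurface $X^{(1)}$, whose $2$-regularity lifts through Theorem \ref{coro_Np}(c). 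For $(b)\Rightarrow(f)$ I would induct on $e$: a $2$-regular ideal is quadratic, so $s=1$ and $X_{\texttt q}$ is again $2$-regular of codimension $e-1$ by Theorem \ref{coro_Np}(c); since $a(X)\ge e$, equality (\ref{Np_eq}) gives $\beta_{p,1}(X)=p\binom{e}{p+1}+(p-1)\binom{e}{p}+\binom{e}{p}=p\binom{e+1}{p+1}$ by Pascal's rule.

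The main obstacle is the geometric input isolated above: that a general inner projection of a nondegenerate $X\in\mathsf{CC}_1(k)$ of codimension $e\ge 2$ is again a nondegenerate member of $\mathsf{CC}_1(k)$ of codimension $e-1$, with the degree dropping by one. Unlike the irreducible case, where the Trisecant lemma alone secures birationality and preservation of the category, here one must additionally rule out two components collapsing onto each other and guarantee that the dual intersection graph stays connected in codimension one. This is exactly what the component-by-component scheme of Figure \ref{red_comp} is built to ensure: projecting from general points of one component at a time keeps the center off every other component (so those undergo degree-preserving birational outer projections) and keeps each pairwise codimension-one intersection codimension-one in the image, while a Trisecant-type general-position argument on the chosen component supplies birationality and the drop of degree by one there. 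Checking carefully that equidimensionality, connectedness in codimension one, nondegeneracy, and the degree/codimension count all survive a general projection is the only genuinely new work; once it is in place, every algebraic step is inherited from Section \ref{prf_mains}.
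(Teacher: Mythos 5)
Your proposal is correct and follows essentially the same route as the paper, which itself only sketches this theorem by asserting that the component-by-component reduction of Figure \ref{red_comp} behaves well in $\mathsf{CC}_1(k)$ and then invoking the arguments of Section \ref{prf_mains} verbatim. You correctly isolate the one genuinely new verification (that a general inner projection preserves nondegeneracy, equidimensionality, and connectedness in codimension one while dropping the codimension by exactly one), which is precisely the point the paper leaves at the level of the reduction scheme i)--ii); your Pascal-rule computation for $(b)\Rightarrow(f)$ via (\ref{Np_eq}) is a harmless substitute for the paper's citation of known Betti tables of $2$-regular schemes.
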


\begin{Remk}
In $\mathsf{CC}_1(k)$, we can see easily who the maximal Betti numbers are geometrically. First, we recall that a sequence $\{X_1,X_2,\cdots,X_n\}$ of the components of an algebraic set $X=\cup X_i$ is \textit{linearly joined} if we have 
\[(X_1\cup\cdots\cup X_i)\cap X_{i+1}=\langle X_1\cup\cdots\cup X_i\rangle\cap \langle X_{i+1}\rangle\] 
for every $i=1,2,\cdots,n-1$, where $\langle X_i\rangle$ means its span (so this may depend on the ordering). Being of minimal degree, they are just all the linearly joined union of VMDs. This also coincides with the classification of \cite{EGHP2}, because of 2-regularity. 
\end{Remk}

Now, we look some interesting examples up. Since the theory is closed related to the geometry of codimension, the examples have chosen among the curve cases.

\begin{Ex}
Let $X_1\subset\P^4$ be a union of a line $\ell$ and a twisted cubic $C$ such that $\ell\cap C=\langle\ell\rangle\cap\langle C\rangle=one~point$. Let $X_2$ be a union of two plane conics $Q_1, Q_2$ meeting $one~point$ (their spans also) in $\P^4$. Both $X_1$ and $X_2$ in $\mathsf{CC}_1(k)$ are linearly joined unions of VMDs and $\codim~e=3$. Using \texttt{Macaulay 2} (see \cite{M2}), we can verify that they give the same Betti table having maximal Betti numbers as expected in Theorem \ref{cc1_sharp_ubd}.
\[
\begin{array}{c|c|c|c|c|}
   &0 &1 &2 &3 \\ \hline
0 &\tt{1} & - &- &- \\ \hline
1 &- &\tt{6} &\tt{8} &\tt{3} \\ \hline
\end{array}\\
\]
\end{Ex}

But, we can not drop the condition `connected in codimension 1' in Theorem \ref{cc1_sharp_ubd} as the following example says.

\begin{Ex}[Skew lines in $\P^3$]\label{skew} Let $X=\ell_1\cup \ell_2$ be skew lines in $\P^3$ ($e=2$) and set $I_X=(x_0 x_2,x_0 x_3,x_1 x_2,x_1 x_3)$. $X$ is nondegenerate, linearly joined, but not connected in codimension 1 (by convention, consider $\dim\emptyset=-1$). By \texttt{Macaulay 2}, we can compute the Betti table of $X$ as below. All $\beta_{p,1}$'s \textit{exceed} the extremal bounds in (\ref{cc1_extr_bd}) of codimension 2.
\[
\begin{array}{c|c|c|c|c|}
   &0 &1 &2 &3 \\ \hline
0 &\tt{1} & - &- &- \\ \hline
1 &- &\tt{4} &\tt{4} &\tt{1} \\ \hline
\end{array}\]
\end{Ex}

We also have examples which show that the bounds (\ref{n_extr_bd}) may not serve as next-to-extremal bounds in $\mathsf{CC}_1(k)$. In other words, from the consideration of next-to-extremal case it might be possible to occur many interesting Betti tables according to the configurations of unions of small degree varieties even though in the category $\mathsf{CC}_1(k)$.

\begin{Ex}[On next-to-extremal bound]\label{prb_n_ext} Let $X_1\subset\P^4$ be a union of a plane conic $Q$ and a twisted cubic $C$ meeting $one~double~point$ with $\langle Q\rangle\cap\langle C\rangle=\P^1$ ($e=3$). This is nondegenerate, connected in codimension 1, but not linearly joined. $X_1$ is also of next-to-minimal degree and has the same Betti table as a del Pezzo variety does in $\mathsf{Var}_1(k)$ (see Figure \ref{2tables}). On the other hand, if $X_2$ is a nondegenerate union of plane cubic $C$ (i.e. a singular projection of twisted cubic) and a plane conic $Q$ in $\P^4$ ($e=3$), then $X_2$ has a different Betti table with the one of $X_1$, although $X_2$ is of next-to-minimal degree, connected in codimension 1, and even \textit{linearly joined} (see also Figure \ref{2tables}). We see that $\beta_{2,1}$ and $\beta_{3,1}$ exceed next-to-extremal bounds (\ref{n_extr_bd}) though $\beta_{1,1}$ \textit{achieves} the maximum of (\ref{n_extr_bd}). Note that two Betti tables get the same after taking a \textit{diagonal cancellation}.
\end{Ex}

\begin{figure}[!hbt]
\[\begin{array}{cccc}
\mathbb{B}(X_1)\qquad\begin{array}{c|c|c|c|c|}
   &0 &1 &2 &3 \\ \hline
0 &\tt{1} & - &- &- \\ \hline
1 &- &\tt{5} &\tt{5} &- \\ \hline
2 &- &- &- & \tt{1} \\ \hline
\end{array}
&&&
\mathbb{B}(X_2)\qquad\begin{array}{c|c|c|c|c|}
   &0 &1 &2 &3 \\ \hline
0 &\tt{1} & - &- &- \\ \hline
1 &- &\tt{5} &\tt{6} &\tt{2} \\ \hline
2 &- &\tt{1} &\tt{2} & \tt{1} \\ \hline
\end{array}
\end{array}\]
\caption{Two Betti tables of $X_1$ and $X_2$, algebraic sets of next-to-minimal degree in $\mathsf{CC}_1(k)$ by \texttt{Macaulay 2}. Note that two tables be the same after a \textit{diagonal cancellation}.}
\label{2tables}
\end{figure}

\begin{Qu} Here are our questions.
\begin{itemize}
 \item[(a)] Is it possible to generalize upper bounds (\ref{cc1_extr_bd}) and (\ref{n_extr_bd}) into more general categories such as $\mathsf{AlgSet}(k)$ (possibly in terms of codimensions of components and other invariants, if needed)?
 \item[(b)] Can we explain the reason of the difference of two Betti tables in Figure \ref{2tables} \textit{geometrically}? Is it possible to heal the next-to-extremal case in $\mathsf{CC}_1(k)$ (see Example \ref{prb_n_ext}) by figuring out this \textit{phenomena} of diagonal cancellation?  
 \item[(c)] Classify or characterize those who have \textit{next-to-simple} Betti tables (the \textit{simplest} are the tables of 2-regular schemes) geometrically in $\mathsf{CC}_1(k)$ or more general categories (see also question 5.6 in \cite{HK1}).
\end{itemize}
\end{Qu}

\paragraph{\bf More improved bounds}

Concerning on linear syzygies of $X$ at least, one could say in general 
\begin{center}
\textit{More quadrics $X$ has, Nicer syzygies $X$ has.}\\
\end{center}
Here, \textit{what `niceness' does mean} could be spoken in many different ways, but in view of Theorem \ref{char_VMD} and \ref{char_del Pezzo} we can say it means getting closer to maximal Betti numbers in the linear strand and higher $a(X)$ (or $b(X)$) our $X$ has.

On this point there is an interesting fact such as (coming from Corollary 3.8 in \cite{HK1}): 
\begin{Fact}
Let $X^n\subset \P^{n+e}$ be a nondegenerate subscheme in $\mathsf{Var}(k)$ (or $\mathsf{CC}_1(k)$) of $\codim~e$. Then, we have
\begin{equation}
{e+1\choose 2}-{e+1-a(X)\choose 2}\le\beta_{1,1}(X)~.
\end{equation}
\end{Fact}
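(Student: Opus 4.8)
The plan is to run the exact quadric-counting formula (\ref{eq_num_quad}) of Theorem \ref{1st_str_ineq} \emph{backwards} along a chain of general inner projections, exploiting that the hypothesis $a(X)\ge 1$ forces the defining ideal to be quadratic and hence the stabilization number to be $s=1$ at every reduction step still lying in the linear regime. The crucial point is that under $s=1$ one has $t=\dim_k(K_1(I))_1=e$, so the upper bound $\beta_{1,1}(X)\le\beta_{1,1}(X_{\texttt q})+e$ becomes the \emph{equality} $\beta_{1,1}(X)=\beta_{1,1}(X_{\texttt q})+e$. Summing these equalities along the chain and discarding the non-negative contribution of the terminal projection then yields the desired lower bound.

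Concretely, I would set $a=a(X)$. If $a=0$ the right-hand side is ${e+1 \choose 2}-{e+1 \choose 2}=0\le\beta_{1,1}(X)$ and nothing is to prove, so assume $a\ge 1$. Then $I_X$ is quadratic, so by Proposition \ref{prop_stab}(b) and Remark \ref{suff_s_1} a general point $\texttt q$ has $s(\texttt q)=1$ and $t=e$; formula (\ref{eq_num_quad}) then reads $\beta_{1,1}(X^{(e)})=\beta_{1,1}(X^{(e-1)})+e$. Next I would take the sequence of iterated general inner projections $\{X=X^{(e)},X^{(e-1)},\dots\}$ of (\ref{iter_inner}). Since a general inner projection lowers the invariant $a$ by at most one (Fact \ref{inner_a(x)}), we have $a(X^{(e-j)})\ge a-j$, so each intermediate $X^{(i)}$ with $i\ge e-a+1$ is still quadratic with $s=1$ and with $t$ equal to its own codimension $i$. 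Applying (\ref{eq_num_quad}) at each of these $a$ steps gives the exact recursion $\beta_{1,1}(X^{(i)})=\beta_{1,1}(X^{(i-1)})+i$ for $i=e,e-1,\dots,e-a+1$.

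Telescoping the recursion from $i=e$ down to $i=e-a+1$ yields
\begin{equation*}
\beta_{1,1}(X)=\beta_{1,1}(X^{(e-a)})+\sum_{i=e-a+1}^{e} i .
\end{equation*}
By the Vandermonde-type identity (\ref{bino_id_a}) (in its elementary form $\sum_{i=1}^{m} i={m+1 \choose 2}$) the sum equals ${e+1 \choose 2}-{e-a+1 \choose 2}={e+1 \choose 2}-{e+1-a \choose 2}$, and since $\beta_{1,1}(X^{(e-a)})\ge 0$ I obtain the asserted inequality ${e+1 \choose 2}-{e+1-a(X) \choose 2}\le\beta_{1,1}(X)$.

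The part needing the most care will be the bookkeeping of the $a$-invariant along the chain: I must ensure that generality of each center $\texttt q$ is genuinely preserved so that $t$ equals the current codimension (equivalently $s=1$) at every one of the first $a$ steps, and that Fact \ref{inner_a(x)} really guarantees $a(X^{(i)})\ge 1$ throughout, so that the ideal stays quadratic and (\ref{eq_num_quad}) applies as an equality rather than merely as an inequality. For the $\mathsf{CC}_1(k)$ case the only modification is that the reduction must be carried out component by component as in Figure \ref{red_comp}; but since each such step still drops the codimension by exactly one and (\ref{eq_num_quad}) holds verbatim, the identical telescoping argument delivers the same bound.
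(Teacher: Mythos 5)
Your argument is correct, but it is worth noting that the paper itself does not prove this Fact at all: it simply imports it as Corollary 3.8 of \cite{HK1}. What you have written is a self-contained derivation inside the present paper's framework, and it is essentially the same telescoping technique the authors deploy in the proof of $(e)\Rightarrow(d)$ of Theorem \ref{char_VMD}: if $a=a(X)\ge 1$ then $I_X$ is quadratic, so $s(\texttt{q})=1$ and $t=e$ for a general $\texttt{q}$ (Proposition \ref{prop_stab}(b) and Remark \ref{suff_s_1}), whence (\ref{eq_num_quad}) is the exact identity $\beta_{1,1}(X)=\beta_{1,1}(X_{\texttt{q}})+e$; Fact \ref{inner_a(x)} keeps $a(X^{(i)})\ge 1$ for the first $a$ steps of the chain (\ref{iter_inner}), and summing $\sum_{i=e-a+1}^{e}i={e+1\choose 2}-{e+1-a\choose 2}$ together with $\beta_{1,1}(X^{(e-a)})\ge 0$ gives the bound. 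Two small points of bookkeeping you should make explicit. First, along the chain the intermediate schemes $X^{(i)}$ are governed by the elimination ideals $I^{(i)}=K_0(I^{(i+1)})$, which need not be saturated, so you must invoke Remark \ref{for_gen_ideal} (and read Fact \ref{inner_a(x)} and $\beta_{1,1}(X^{(i)})$ with respect to these ideals) exactly as the authors do in the proof of Theorem \ref{char_VMD}. Second, when $a\ge e$ your recursion would run past the hypersurface stage; but in that range $X$ is a variety of minimal degree by Theorem \ref{char_VMD}, so $\beta_{1,1}(X)={e+1\choose 2}$ and the claimed inequality holds trivially, so it costs nothing to restrict the telescoping to $a\le e-1$ (or to stop at the quadric hypersurface $X^{(1)}$). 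With these caveats your proof is a valid, and arguably more transparent, replacement for the external citation.
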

In other words, it means that $a(X)$ has some necessary conditions on $\beta_{1,1}$.  Therefore, we suspect that the following question might be true:
\begin{equation}\label{qu_b(X)}
\textrm{Is it possible to give an upper bound on $\beta_{1,1}(X)$ in terms of $b(X)$?}
\end{equation} 
, that is the question about whether $\beta_{1,1}$ does impose some sufficient condition for $b(X)$ or not. For a large $b(X)$ (to be precise, for $b(X)\ge e$ in $\mathsf{Var}(k)$), (\ref{qu_b(X)}) is true. It is also considered as a kind of \textit{converse} of the idea, say
\begin{center}
\textit{High $b(X)$ guarantees many quadrics on $X$ so that $X$ can inherits interesting geometric structures from the embedding quadrics}
\end{center}
, on which many problems (e.g. Green's conjectures on algebraic curves in \cite{G2}) are mainly based. As one of the ways to answer the question (\ref{qu_b(X)}), we raise the following question:
\begin{Qu}\label{qu_inner_b(X)}
Let $X^n\subset \P^{n+e}$ be a nondegenerate reduced subscheme of $\codim~e$ and $X_{\texttt{q}}$ be its inner projected image. 
\begin{center}
Does it hold that $b(X_{\texttt{q}})\le b(X)-1$ for a general point $\texttt{q}\in X$?
\end{center}
\end{Qu}

\begin{Remk}\label{remk_qu_b} We complete by making some relevant remarks.
\begin{itemize}
\item[(a)] For $a(X)$, we have an interesting result from corollary 3.4 in \cite{HK1}:
\begin{Fact}\label{inner_a(x)}
Let $X^n\subset \P^{n+e}$ be a nondegenerate reduced subscheme of $\codim~e$ and $X_{\texttt{q}}$ be its inner projected image. Then, we have
$$a(X_q)\ge a(X)-1~\textrm{for a general (in fact, any smooth) point $\texttt{q}\in X$.}$$
\end{Fact}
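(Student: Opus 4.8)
The plan is to deduce this Fact directly from Theorem~\ref{coro_Np}(a), which is precisely the generalization of corollary 3.4 in \cite{HK1} to which the Fact is attributed. The key is the dictionary between property $\textbf{N}_{2,p}$ and the invariant $a$: using $\beta^R_{i,j}(R/I)=\beta^R_{i-1,j+1}(I)$ one checks that $I_X$ satisfies $\textbf{N}_{2,p}$ if and only if $a(X)\ge p$. Hence the target inequality $a(X_{\texttt{q}})\ge a(X)-1$ is literally the statement that the defining ideal $J:=K_0(I_X)=I_X\cap S$ of $X_{\texttt{q}}$ satisfies $\textbf{N}_{2,a(X)-1}$ as an $S$-module, and this is exactly the kind of vanishing Theorem~\ref{coro_Np}(a) produces.

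First I would dispose of the trivial range. If $a:=a(X)=0$ the claim reads $a(X_{\texttt{q}})\ge -1$, which is vacuous, so we may assume $a\ge 1$. In that case $\beta_{1,q}(X)=0$ for all $q\ge 2$, i.e. $I:=I_X$ has no generators of degree $\ge 3$, and, being nondegenerate, $I$ is generated by quadrics. Next I would pin down the stabilization number $s=s(\texttt{q})$. Since $I$ is quadratic, Proposition~\ref{prop_stab}(b) gives $s\le d-1=1$. For a smooth point $\texttt{q}$ the same proposition identifies $K_1(I)=K_\infty(I)=I_{T_{\texttt{q}}X}$ with the ideal generated by the $e\ge 1$ independent linear forms cutting out the projected tangent space, whereas $K_0(I)=J$ has no linear forms by nondegeneracy. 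Thus $K_0(I)\subsetneq K_1(I)$, forcing $s=1$, so that $\widetilde{K}_{s-1}(I)=\widetilde{K}_0(I)=J$ is exactly the defining ideal of $X_{\texttt{q}}$.

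Finally I would invoke Theorem~\ref{coro_Np}(a) with $d=2\ge s$ and $p_0=a$. Since $I$ satisfies $\textbf{N}_{2,a}$ (as $a(X)=a$), the theorem yields
\[
\beta^S_{p,q}(J)=\beta^S_{p,q}\bigl(\widetilde{K}_{0}(I)\bigr)=0 \quad\text{for all }0\le p<a-1\text{ and }q>2.
\]
By the dictionary of the first paragraph this is precisely $\textbf{N}_{2,a-1}$ for $J$, i.e. $a(X_{\texttt{q}})\ge a-1=a(X)-1$. Because the argument used only that $\texttt{q}$ is smooth, it delivers the parenthetical strengthening ``in fact, any smooth point'' as well.

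The only genuinely non-formal step is the identity $s=1$: this is where smoothness and positivity of the codimension enter, through the tangent-space reading of $K_1(I)$ in Proposition~\ref{prop_stab}(b) and Remark~\ref{suff_s_1}. The subtle point to be careful about is ruling out the degenerate alternative $s=0$ (equivalently $\dim_k[K_1(I)_1]=0$), in which $\widetilde{K}_{s-1}(I)$ would collapse to the zero module and Theorem~\ref{coro_Np}(a) would say nothing about $X_{\texttt{q}}$; smoothness with $e\ge 1$ is exactly what excludes this. Everything else is the shift of homological degree between $I$ and $R/I$ together with the definition of $\textbf{N}_{2,p}$.
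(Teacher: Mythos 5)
Your proposal is correct and follows essentially the route the paper intends: the Fact is quoted from corollary 3.4 in \cite{HK1}, of which Theorem~\ref{coro_Np}(a) is the stated generalization, and your argument simply unwinds that theorem via the dictionary $a(X)\ge p \Leftrightarrow I_X$ satisfies $\textbf{N}_{2,p}$ together with the identification $s(\texttt{q})=1$ and $\widetilde{K}_{s-1}(I)=K_0(I)=J$ from Proposition~\ref{prop_stab}(b) and Remark~\ref{suff_s_1}. Your explicit attention to ruling out $s=0$ and to the vacuous cases $a(X)\le 1$ is a welcome bit of care that the paper leaves implicit.
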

\item[(b)] We know that $b(X_{\texttt{q}})\le b(X)$ for a general $\texttt{q}\in X$ always holds. To the best of author's knowledge, there hasn't been a counterexample for Question \ref{qu_inner_b(X)} except the case of $\texttt{q}$ being singular. If Question \ref{qu_inner_b(X)} is true, then through similar arguments in subsection \ref{direct_cal},  we can answer the question (\ref{qu_b(X)}) as follows:
\begin{align*}
\beta_{p,1}(X)&\le p{e+1\choose p+1}+\left\{{e+1\choose p+1}-{b\choose p+1}\right\}-(e-b
+1){e+1\choose p}
\end{align*}
, which are more improved upper bounds in terms of $e,p,b:=b(X)$ generalizing the bounds (\ref{extr_bd}) and (\ref{n_extr_bd}).
\end{itemize}
\end{Remk}



\end{document}